\newcommand{\bit}{\begin{itemize}}
	\newcommand{\eit}{\end{itemize}}
\newcommand{\ben}{\begin{enumerate}}
	\newcommand{\een}{\end{enumerate}}
\newcommand{\bds}{\begin{description}}
	\newcommand{\eds}{\end{description}}
\newcounter{romc}
\newcounter{alphc}
\newcommand{\blr}{\begin{list}{~(\roman{romc})~} {\usecounter{romc}
			\setlength{\topsep}{0pt} \setlength{\itemsep}{0pt}}}
	\newcommand{\elr}{\end{list}}
\newcommand{\bla}{\begin{list}{~(\alph{alphc})~} {\usecounter{alphc}
			\setlength{\topsep}{0pt} \setlength{\itemsep}{0pt}}}
	\newcommand{\ela}{\end{list}}
\newcommand{\boxg}{{\Box_{\mathfrak o}}}
\newcommand{\oboxg}{{\overline{\Box}_{\mathfrak o}}}
\newcommand{\odiamondg}{{\overline{\Diamond}_{\mathfrak o}}}
\newcommand{\odiamondm}{{\overline{\Diamond}_{\mathfrak p}}}
\newcommand{\gboxg}[1]{{\Box_{\mathfrak o}^{#1}}}
\newcommand{\ogboxg}[1]{{\overline{\Box}_{\mathfrak o}^{#1}}}
\newcommand{\boxm}{{\Box_{\mathfrak p}}}
\newcommand{\oboxm}{{\overline{\Box}_{\mathfrak p}}}
\newcommand{\gboxm}[1]{{\Box_{\mathfrak p}^{#1}}}
\newcommand{\ogboxm}[1]{{\overline{\Box}_{\mathfrak p}^{#1}}}
\newcommand{\diamondg}{{\Diamond_{\mathfrak o}}}
\newcommand{\gdiamondg}[1]{{\Diamond_{\mathfrak o}^{#1}}}
\newcommand{\ogdiamondg}[1]{{\overline{\Diamond}_{\mathfrak o}^{#1}}}
\newcommand{\diamondm}{{\Diamond_{\mathfrak p}}}
\newcommand{\gdiamondm}[1]{{\Diamond_{\mathfrak p}^{#1}}}
\newcommand{\ogdiamondm}[1]{{\overline{\Diamond}_{\mathfrak p}^{#1}}}
\newcommand{\boxng}{{\boxminus_{\mathfrak o}}}
\newcommand{\oboxng}{{\overline{\boxminus}_{\mathfrak o}}}
\newcommand{\gboxng}[1]{{\boxminus_{\mathfrak o}^{#1}}}
\newcommand{\ogboxng}[1]{{\overline{\boxminus}_{\mathfrak o}^{#1}}}
\newcommand{\boxnm}{{\boxminus_{\mathfrak p}}}
\newcommand{\gboxnm}[1]{{\boxminus_{\mathfrak p}^{#1}}}
\newcommand{\oboxnm}{{\overline{\boxminus}_{\mathfrak p}}}
\newcommand{\ogboxnm}[1]{{\overline{\boxminus}_{\mathfrak p}^{#1}}}
\newcommand{\badstart}[0]{\ \\[-.2in]}
\newcommand\sbullet[1][.3]{\mathbin{\vcenter{\hbox{\scalebox{#1}{$\bullet$}}}}}
\newtheorem{re}{Remark}
\begin{document}

%%
%% The "title" command has an optional parameter,
%% allowing the author to define a "short title" to be used in page headers.
\title[Reasoning with Formal Contexts]{On the Logical and Algebraic Aspects of Reasoning with Formal Contexts}

%%
%% The "author" command and its associated commands are used to define
%% the authors and their affiliations.
%% Of note is the shared affiliation of the first two authors, and the
%% "authornote" and "authornotemark" commands
%% used to denote shared contribution to the research.
\author{Prosenjit Howlader}
\authornote{This is an extended version of \cite{HowladerL23}. Both authors contributed equally to this research.}
\email{prosen@mail.iis.sinica.edu.tw}
\orcid{1234-5678-9012}
\author{Churn-Jung Liau}
\authornotemark[1]
\email{liaucj@iis.sinica.edu.tw}
\orcid{0000-0001-6842-9637}
\affiliation{%
  \institution{Institute of Information Science,Academia Sinica}
  %\streetaddress{}
  \city{Taipei}
  %state{Ohio}
   \postcode{115}
   \country{Taiwan}
}

%\author{Lars Th{\o}rv{\"a}ld}
%\affiliation{%
 % \institution{The Th{\o}rv{\"a}ld Group}
  %\streetaddress{1 Th{\o}rv{\"a}ld Circle}
  %\city{Hekla}
  %\country{Iceland}}
%\email{larst@affiliation.org}

%\author{Valerie B\'eranger}
%\affiliation{%
 % \institution{Inria Paris-Rocquencourt}
  %\city{Rocquencourt}
  %\country{France}
%}

%\author{Aparna Patel}
%\affiliation{%
% \institution{Rajiv Gandhi University}
 %\streetaddress{Rono-Hills}
 %\city{Doimukh}
 %\state{Arunachal Pradesh}
 %\country{India}}

%\author{Huifen Chan}
%\affiliation{%
 % \institution{Tsinghua University}
  %\streetaddress{30 Shuangqing Rd}
  %\city{Haidian Qu}
  %\state{Beijing Shi}
  %\country{China}}

%\author{Charles Palmer}
%\affiliation{%
 % \institution{Palmer Research Laboratories}
  %\streetaddress{8600 Datapoint Drive}
  %\city{San Antonio}
  %\state{Texas}
  %\country{USA}
  %\postcode{78229}}
%\email{cpalmer@prl.com}

%\author{John Smith}
%\affiliation{%
 % \institution{The Th{\o}rv{\"a}ld Group}
 % \streetaddress{1 Th{\o}rv{\"a}ld Circle}
  %\city{Hekla}
  %\country{Iceland}}
%\email{jsmith@affiliation.org}

%%
%% By default, the full list of authors will be used in the page
%% headers. Often, this list is too long, and will overlap
%% other information printed in the page headers. This command allows
%% the author to define a more concise list
%% of authors' names for this purpose.
\renewcommand{\shortauthors}{Howlader and  Liau.}

%%
%% The abstract is a short summary of the work to be presented in the
%% article.
\begin{abstract}
A formal context consists of objects, properties, and the incidence relation between them. Various notions of concepts defined with respect to formal contexts and their associated algebraic structures have been studied extensively, including formal concepts in formal concept analysis (FCA), rough concepts arising from rough set theory (RST), and semiconcepts and protoconcepts for dealing with negation. While all these kinds of concepts are associated with lattices, semiconcepts and protoconcepts additionally yield an ordered algebraic structure, called double Boolean algebras. As the name suggests, a double Boolean algebra contains two underlying Boolean algebras.

In this paper, we investigate logical and algebraic aspects of the representation and reasoning about different concepts  with respect to formal contexts. We first review our previous work on two-sorted modal logic systems \textbf{KB} and \textbf{KF} for the representation and reasoning of rough concepts and  formal concepts, respectively. 
Then, in order to represent and reason about both formal and rough concepts in a single framework, these two logics are unified into a two-sorted Boolean modal logic \textbf{BM}, in which semiconcepts and protoconcepts are also expressible. Based on the logical representation of semiconcepts and protoconcepts, we prove the characterization of double Boolean algebras in terms of their underlying Boolean algebras. Finally, we also discuss the possibilities of extending our logical systems for the representation and reasoning of more fine-grained quantitative information in formal contexts.
\end{abstract}

%%
%% The code below is generated by the tool at http://dl.acm.org/ccs.cfm.
%% Please copy and paste the code instead of the example below.
%%

%%
%% Keywords. The author(s) should pick words that accurately describe
%% the work being presented. Separate the keywords with commas.
\keywords{Modal logic, Boolean algebra, Double Boolean algbra, Formal concept analysis, Rough set theory.}

%\received{20 February 2007}
%\received[revised]{12 March 2009}
%\received[accepted]{5 June 2009}

%%
%% This command processes the author and affiliation and title
%% information and builds the first part of the formatted document.
\maketitle

\section{Introduction}
\label{intro}
In lattice theory, a {\it polarity} \cite{birkhoff1940lattice} is a triplet $\mathbb{K}:=(G, M, I)$ where $G$ and $M$ are  sets and $I\subseteq G\times M$ is a binary relation between them. For $g\in G $ and $m\in M$, we usually use $Igm$ or $gIm$ to denote $(g,m)\in I$. A given polarity induces two operators $+: (\mathcal{P}(G), \subseteq)\rightarrow (\mathcal{P}(M), \supseteq) $ and $-: (\mathcal{P}(M), \supseteq)\rightarrow (\mathcal{P}(G), \subseteq)$, where for all $A\in \mathcal{P}(G)$ and $B\in \mathcal{P}(M)$:
\[A ^{+}= \{ m \in  M \mid \mbox{ for all }  g \in  A ~~  g I m \},\]
\[B ^{-}= \{ g \in  G \mid \mbox{ for all }  m \in  B ~~  g I m \}.\]
In  formal concept analysis (FCA)~\cite{FCA}, a polarity is called a {\it formal context} (or simply context). A {\it (formal) concept} is a pair of sets $(A, B)$ such that $A^{+}=B$ and $A=B^{-}$, where $A$ and $B$ are called its {\it extent } and {\it intent}, respectively. The set of all concepts is denoted by $\mathbf{B}(\mathbb{K})$ and forms  a complete lattice $\underline{\mathbf{B}}(\mathbb{K})$. Although the conjunction and disjunction of concepts are defined in such lattices, there is no natural way to define the negation of concepts as simply complementing the extent or intent of a concept may not yield another concept. For example, $(\overline{A},\overline{A}^+)$ may be not a formal concept even though $A$ is the extent of a formal concept. To deal with negated concepts, the notions of {\it semiconcepts} and {\it protoconcepts} are introduced in \cite{wille} by relaxing the Galois stable conditions imposed on formal concepts. Then, in accordance with the definition,  both $(\overline{A},\overline{A}^+)$  and $(\overline{B}^-,\overline{B})$  are semiconcepts for $A\subseteq G$ and $B\subseteq M$. Algebraic studies of these notions led to the definition of double Boolean algebras (dBa) and pure double Boolean algebras \cite{wille}. These structures have been studied extensively \cite{wille,vormbrock2005semiconcept,BALBIANI2012260,MR4566932,howlader2018algebras, howlader2020}. A Stone-type representation theorem for  fully contextual and pure dBas is given in \cite{howlader2020}. Representation theorems for arbitrary  dBas, however, remain open. In this article, we address this problem using Boolean algebras.

D\"{u}ntsch et al. \cite{duntsch2002modal} defined sufficiency, dual sufficiency, possibility and necessity operators based on a context. For a context  $\mathbb{K}:=(G,M,I)$, $g\in G$, and $m\in M$, $I_{g\sbullet}:= \{m\in M\mid Igm\}$ and $I_{\sbullet m}:=\{g\in G\mid Igm\}$ are the {\it right-neighborhood} and {\it left-neighborhood} of $g$ and $m$, respectively. For $A\subseteq G$, and $B\subseteq M$, the pairs of dual approximation operators are defined as:

$(\mbox{possibility})~B_{I}^{\diamondm}:=\{g\in G:I_{g\sbullet}\cap B\neq\emptyset\},~~~~ ~~~~~(\mbox{necessity})~B_{I}^{\boxm}:=\{g\in G:I_{g\sbullet}\subseteq B\}$.

$(\mbox{possibility})~A_{I}^{\diamondg}:=\{m\in M:I_{\sbullet m}\cap A\neq\emptyset\},~~~ (\mbox{necessity})~A_{I}^{\boxg}:=\{m\in M:I_{\sbullet m}\subseteq A\}$.

If there is no confusion about the relation involved, we shall omit the subscript and denote $B_{I}^{\diamondm}$ by  $B^{\diamondm}$, $B_{I}^{\boxm}$ by $B^{\boxm}$ and  similarly for the case of $A$.

The necessity and possibility operators correspond to approximation operators in rough set theory (RST) \cite{pawlak1982rough}. Based on these operators, D\"{u}ntsch et al. \cite{duntsch2002modal} and Yao \cite{yao2004concept}  introduced property oriented concepts and object oriented concepts, respectively.
A pair $(A,B)$ is a  {\it property oriented concept}  of  $\mathbb{K}$ iff $ A^{\diamondg}=B$ and $B^{\boxm}=A$, and it  is an {\it object oriented concept}  of $\mathbb{K}$ iff $ A^{\boxg}=B$ and $B^{\diamondm}=A$. These two kinds of concepts are also called rough concepts. 

The operators used in formal and rough concepts correspond to modalities used in modal logic  \cite{Gargov1987,blackburn2002moda}. In particular, the operator used in FCA is the window (sufficiency)  modality~\cite{Gargov1987} and those used in RST are box (necessity) and diamond (possibility) modalities \cite{blackburn2002moda}. Furthermore, a context is a two-sorted structure consisting of object  and  property universes. 

Considering these facts, a significant amount of work has been done on relational semantics of logics interpreted in context-based models. In other words, for these kinds of investigations, contexts are semantic frames of logical formulas. Gehrke \cite{Gehrke} presented a two-sorted  relational semantics for the implication-fusion fragment of several sub-structural logics, where a reduced and separating frame is defined as a reduced and separating context \cite{Gehrke} with
additional relations. The two-sorted  relational semantics for non-distributive modal logic was studied by Conradie et al.  \cite{CWFSPAPMTAWN, Conradie2017167,CONRADIE2019923,CONRADIE2021371,nlsm}, Greco et al. \cite{Greco} and  Hartonas \cite{HCgame, CH} independently.   In \cite{CWFSPAPMTAWN}, an intuitive, epistemic interpretation of RS-frames for modal logic, in terms of categorization
systems and agents are proposed. A sound and complete epistemic logic of  categories and  categorical perception of agents is studied in \cite{Conradie2017167}. An LE-logic is proposed in  \cite{Greco} as a non-distributive modal logic for lattice expansion.  In \cite{CONRADIE2021371}, a two-sorted relational semantics based on context for LE-logic is presented from the perspective of FCA and RST. For FCA, modal operators are treated as window modality and formulas are interpreted as formal concepts of a context. While RST is taken into consideration, the modal operators are simply box and diamond modalities. 

All the logics studied in \cite{CWFSPAPMTAWN,Conradie2017167,Greco,nlsm} are Gentzen-style sequent calculus. In this paper, we study Hilbert-style axiomatic systems for formal contexts. Motivated from Boolean modal logic \cite{Gargov1987}, we developed its two-sorted version that allows us to work with box and window modalities within a single model.

To achieve the goal, we first introduce two-sorted modal logics \textbf{KB} and \textbf{KF}  to deal with box and window modalities, respectively. The two logics are then integrated into a two-sorted Boolean modal logic \textbf{BM} where both kind of modalities are interpreted in a single context. We prove the soundness and completeness of the proposed logics and show that these  logics can  represent most important notions of concepts in FCA. 

More specifically, the language of \textbf{BM} consist of 2-indexed family $\{P_{s_{1}}, P_{s_{2}}\}$ of propositional variables, logical connectives and  modal  connectives  $\boxg, \boxm, \boxng, \boxnm$. The compound $s$-sort formulas are built over atomic formulas in $P_s$  for $s=s_1,s_2$. The indexed family of formulas is denoted by $Fm(\textbf{BM}):=\{Fm(\textbf{BM})_{s_{1}}, Fm(\textbf{BM})_{s_{2}}\}$. We characterize the pairs of formulas that represent concepts, semiconcepts, and protoconcepts in a context. For instance, a pair of formulas $(\varphi, \psi)$ is a right semiconcept in a context-based model $\mathfrak{M}$ if $\mathfrak{M}\models \varphi\leftrightarrow\boxnm\psi$ and is a left semiconcept if $\mathfrak{M}\models \boxng\varphi\leftrightarrow\psi$. The set of formula pairs representing left (resp.\ right) semiconcepts is denoted  by  ${\tt SC}^{left}$ (resp.\ ${\tt SC}^{right}$). Two left (resp.\ right) semiconcepts $(\varphi_{1}, \psi_{1})$ and $(\varphi_{2}, \psi_{2})$  are called equivalent if $\varphi_{1}$ and $\varphi_{2}$ (resp.\ $\psi_{1}$ and $\psi_{2}$) are equivalent. This defines an equivalence relation $\equiv^{left}_{\tt SC}$ (resp. $\equiv^{right}_{\tt SC}$) over ${\tt SC}^{left}$ (resp. ${\tt SC}^{right}$). It is shown that the corresponding equivalence classes  ${\tt SC}^{left}/\equiv^{left}_{\tt SC}$ (resp. ${\tt SC}^{right}/\equiv^{right}_{\tt SC}$)  form Boolean algebras. For a subset $\mathfrak{D}$ of $Fm(\textbf{BM})_{s_{1}}\times Fm(\textbf{BM})_{s_{2}}$, we consider the maps   $r:\mathfrak{D}\rightarrow {\tt SC}^{left}/\equiv^{left}_{\tt SC}$ and $e:{\tt SC}^{left}/\equiv^{left}_{\tt SC}\rightarrow \mathfrak{D}$; $r^{\prime}:\mathfrak{D}\rightarrow {\tt SC}^{right}/\equiv^{right}_{\tt SC}$ and $e^{\prime}:{\tt SC}^{right}/\equiv^{right}_{\tt SC}\rightarrow \mathfrak{D}$ such that $r\circ e=id_{ {\tt SC}^{left}/\equiv^{left}_{\tt SC}}$ and $r^{\prime}\circ e^{\prime}=id_{ {\tt SC}^{right}/\equiv^{right}_{\tt SC}}$. These maps induce an algebraic structure $(\mathfrak{D}, \sqcap, \sqcup, \bar{\neg}, \lrcorner, \top_{\mathfrak{D}}, \bot_{\mathfrak{D}})$ where  for all $x, y\in \mathfrak{D}$

 \begin{center}
     $x\sqcap y:= e(r(x)\wedge r(y))$ and  $x\sqcup y:= e^{\prime}(r^{\prime}(x)\vee r^{\prime}(y))$\\
     $\bar{\neg} x:=e(\neg r(x))$ and $\lrcorner x:=e^{\prime}(\neg r^{\prime}(x))$\\
    $\bot_{\mathfrak{D}}:=e([(\bot_{s_{1}}, \boxng\bot_{s_{1}})])$ and $\top_{\mathfrak{D}}:= e^{\prime}([(\boxnm\bot_{s_{2}}, \bot_{s_{2}})])$
 \end{center}

Then, it is shown that $(\mathfrak{D}, \sqcap, \sqcup, \bar{\neg}, \lrcorner, \top_{\mathfrak{D}}, \bot_{\mathfrak{D}})$ is a pure dBa iff $r, r^{\prime}, e, e^{\prime}$ satisfies certain conditions. Moreover, we prove a more general result as follows:

 \begin{description}
      \item  Let $(B, \wedge, \neg, 0,1)$ and $(B^{\prime}, \vee^{\prime}, \neg^{\prime}, 0',1')$ be two Boolean algebras and let $r:A \rightleftharpoons B: e$ and $r^{\prime}:A \rightleftharpoons B^{\prime}: e^{\prime}$ be  pairs of maps such that $r\circ e=id_{B}$ and $r^{\prime}\circ e^{\prime}=id_{B^{\prime}}$. If $\textbf{A}:=(A, \sqcap, \sqcup, \bar{\neg},\lrcorner, e^{\prime}(1'), e(0))$ is an algebra where $\sqcap$, $\sqcup$, $\bar{\neg}$ and $\lrcorner$ are defined as above, then $\textbf{A}$ is a dBa iff $r, r^{\prime}, e, e^{\prime}$ satisfy certain conditions.
 \end{description}
 
In a typical application of FCA to association rule mining, a formal context can represent transaction data,  where $G$ is the set of customers, $M$ is the set of product items, and $(g,m)\in I$ means that $g$ has purchased $m$. In such a scenario, the conditional probability that a group of customers will purchase a product or not is a crucial piece of information. To facilitate more expressive language for such kind of information, we also suggest possible extensions of the above-mentioned logic to two-sorted  graded  and  weighted modal logics. To demonstrate the expressiveness of two-sorted graded modal logic, we characterize a class of contexts with the properties such as partial functions, functions, one-one and onto functions in terms of logical formulas. We also discuss which two-sorted modal axiom can be extended to the weighted case.

The remainder of the paper is organized as follows. We first review basic definitions and main results of general many-sorted  modal logic and double Boolean algebras in next section. Then, in Section \ref{sec2}, we present logics \textbf{KB} and  \textbf{KF} for the representation and reasoning of rough and formal concepts, respectively.  In addition,  we investigate the concept lattices and their relationships based on the logical representation.  In Section \ref{TWBML}, we propose the two-sorted Boolean modal logic $\textbf{BM}$  as a uniform framework for representing and reasoning about different notions of concepts.  We define logical representations of concepts, semiconcepts and protoconcepts in $\textbf{BM}$ and study their relationships.  In Section~\ref{representation of dBa}, we prove a characterization of dBas in terms of Boolean algebras.   In Section \ref{sec6}, we discuss graded and weighted modal logics for expressing quantitative information in formal contexts. In Section~\ref{relatedwork}, we compare our logics with those proposed in previous work. 
Finally, we summarize the paper and indicate directions of future work in Section \ref{conclusion}.

In this paper, we use the symbol $\implies$ to denote implication in the metalanguage. The complement and power set of a set $X$ are denoted by $\overline{X}$  and $\mathcal{P}(X)$, respectively.

\section{Preliminaries}
This section cover preliminary knowledge about many-sorted modal logic, FCA, and dBa. Our primary references are \cite{mspml,wille, yao2004comparative, MR4566932}.
\subsection{Many-sorted modal logic} \label{mspml}
The many-sorted polyadic modal logic is introduced in \cite{mspml}. Its alphabet consists of a many-sorted signature $(S, \Sigma)$, where $S$ is the collection of sorts and $\Sigma$ is the set of modalities, and an {\it $S$-indexed} family $P:=\{P_{s}\}_{s\in S}$ of propositional variables, where $P_{s}\neq \emptyset$ and $P_{s}\cap P_{s'}= \emptyset$ for distinct $s, s'\in S$. Each modality $\sigma\in\Sigma$ is associated with an arity $s_{1}s_{2}\ldots s_{n}\rightarrow s$. For any $n\in \mathbb{N}$, we denote  $\Sigma_{s_{1}s_{2}\ldots s_{n}s}=\{\sigma\in \Sigma\mid \sigma:s_{1}s_{2}\ldots s_{n}\rightarrow s\}$.  For the purpose of this paper, we only need to consider unary modalities. Hence, we will simply present many-sorted monadic modal logic below.

Let $(S, \Sigma)$ be a signature such that $\Sigma$ is  a set of unary modalities. Then, the set of formulas  is an $S$-index family $Fm_{S}:=\{Fm_{s}\mid s\in S\}$, defined inductively for each $s\in S$ by
\[\varphi_s::= p_s\;\mid\;\neg\varphi_s\;\mid\;\varphi_s\wedge\varphi_s\;\mid\;\langle\sigma\rangle\varphi_{s'}\;\mid\;[\sigma]\varphi_{s'}\]
where $p_s\in P_s$ and $\sigma\in \Sigma_{s's}$.

A {\it many-sorted frame} is a pair  $\mathfrak{F}:=(\{W_{s}\}_{s\in S}, \{R_{\sigma}\}_{\sigma\in \Sigma})$ where $W_{s}\neq \emptyset$, $W_{s}\cap W_{s'}=\emptyset$ for  $s\not=s'\in S$ and $R_{\sigma}\subseteq W_{s}\times W_{s'}$  if  $\sigma\in \Sigma_{s's}$ for  $s,s'\in S$. The class of all  many-sorted  frames is denoted by $\mathbb{MSF}$. A {\it valuation} $v$ is an $S$-indexed family of  maps $\{v_{s}\}_{s\in S}$, where $v_{s}: P_{s}\rightarrow \mathcal{P}(W_{s})$. A  many-sorted model $\mathfrak{M}:=(\mathfrak{F}, v)$ consists of a many-sorted frame $\mathfrak{F}$ and a valuation $v$. The satisfaction of a  formula in a model $\mathfrak{M}$ is defined inductively as follows.

\begin{definition}
\label{satisfiction}
{\rm Let $\mathfrak{M}:=(\{W_{s}\}_{s\in S}, \{R_{\sigma}\}_{\sigma\in \Sigma}, v)$ be a  many-sorted model,  $w\in W_{s}$ and $\varphi \in Fm_{s}$  for $s\in S$. We define $\mathfrak{M}, w\models_{s} \varphi$  by induction over $\varphi$ as follows:
\begin{enumerate}
    \item $\mathfrak{M},w\models_{s} p$ iff $w\in v_{s}(p)$
    \item $\mathfrak{M},w\models_{s}\neg\varphi$ iff $\mathfrak{M},w\not\models_{s}\varphi$
    \item $\mathfrak{M},w\models_{s} \varphi_{1}\wedge \varphi_{2}$ iff $\mathfrak{M},w\models_{s}\varphi_{1}$ and $\mathfrak{M},w\models_{s}\varphi_{2}$
    \item If $\sigma\in \Sigma_{s's}$, then $\mathfrak{M},w\models_{s} \langle\sigma\rangle\psi$ iff there is $w'\in W_{s'}$ such that $(w,w')\in R_{\sigma}$ and $\mathfrak{M},w'\models_{s'}\psi$
    \item If $\sigma\in \Sigma_{s's}$, then  $\mathfrak{M},w\models_{s} [\sigma] \psi$ iff  for all $w'\in W_{s'}$, $(w,w')\in R_{\sigma}$ implies that $\mathfrak{M},w'\models_{s'}\psi$
\end{enumerate} }
\end{definition}
Let $\mathfrak{M}$ be an $(S,\Sigma)$-model. Then, for a set $\Phi\subseteq Fm_s$, $\mathfrak{M},w\models_{s}\Phi$ if $\mathfrak{M},w\models_{s}\varphi$ for all $\varphi\in\Phi$. Let $\mathcal{C}$ be a class of models. Then, for a set $\Phi\cup\{\varphi\}\subseteq Fm_{s}$, $\varphi$ is a {\em local semantic consequence\/} of $\Phi$ over $\mathcal{C}$, denoted by $\Phi\models^{\mathcal{C}}_{s}\varphi$,  if $\mathfrak{M}, w\models_{s}\Phi$ implies $\mathfrak{M}, w\models_{s}\varphi$ for all models $\mathfrak{M}\in\mathcal{C}$ and $w$ in $\mathfrak{M}$. If $\mathcal{C}$ is the class of all models, we omit the superscript and denote it as $\Phi\models_{s}\varphi$.
If $\Phi$ is empty, we say $\varphi$ is valid in  $\mathcal{C}$ and denoted it as $\models_{s}^\mathcal{C}\varphi$. When $\mathcal{C}$ is the class of all models based on a given frame $\mathfrak{F}$, we also denote it by $\models_{s}^\mathfrak{F}\varphi$.

To characterize the local semantic consequence, the modal system $\mathbf{K}_{(S, \Sigma)}:=\{\mathbf{K}_s\}_{s\in S}$ is proposed in \cite{mspml}. The axiomatic system   $\mathbf{K}_s$ is shown in Figure~\ref{fig1} where $\sigma\in\Sigma_{s's}$. When the signature is clear from the context, the subscripts may
be omitted and we simply write the system as $\mathbf{K}$.
\begin{figure}[htp]\centering
	\framebox[110mm] {\parbox{100mm}{\begin{enumerate}
\item Axioms
\begin{enumerate}
    \item (PL): All propositional tautologies of sort $s$.
    \item (K$_{\sigma}$): $[\sigma](\varphi_{s'}\rightarrow\psi_{s'})\rightarrow([\sigma]\varphi_{s'}\rightarrow[\sigma]\psi_{s'}) $
    \item (Dual$_{\sigma}$): $\langle\sigma\rangle\varphi_{s'}\leftrightarrow\neg[\sigma]\neg\varphi_{s'}$
\end{enumerate}
\item  Inference rules:
\begin{enumerate}
    \item (MP)$_{s}$: \[\infer{\psi_s}{\varphi_s, \varphi_s\rightarrow \psi_s}\]
    \item (UG$_\sigma)$: \[\infer{[\sigma]\varphi_{s'}}{\varphi_{s'}}\]
\end{enumerate}
\end{enumerate}
}} \caption{The axiomatic system $\mathbf{K}_s$}\label{fig1}
\end{figure}

\begin{definition}
    {\rm  Let $\Lambda=\{\Lambda_s\subseteq Fm_s\}_{s\in S}$ be an $S$-indexed  set of axioms. The axiomatic system of the normal modal logic defined by  $\Lambda$ is  $\mathbf{K}\Lambda=\{\mathbf{K}_s\cup\Lambda_s\}_{s\in S}$}
\end{definition}

\begin{definition}{\rm 
A sequence of formulas $\varphi_{1}, \varphi_{2}, \ldots\varphi_{n}$ is called a $\mathbf{K}\Lambda$-proof for the formula $\varphi$  if  $\varphi_{n}=\varphi$ and  $\varphi_{i}$ is an instance of formulas in $\mathbf{K}\Lambda$ or inferred from $\varphi_{1}, \ldots, \varphi_{i-1}$ using modus ponens and universal generalization.  If $\varphi\in Fm_s$ has a proof in $\mathbf{K}\Lambda$,  we say that $\varphi$ is a theorem and write $\vdash_{s}  \varphi$.
Let $\Phi\cup\{\varphi\}\subseteq Fm_s$ be a set of formulas. Then, we say that $\varphi$ is provable form $\Phi$, denoted by $\Phi\vdash_{s}  \varphi$, if there exist $\varphi_1,\ldots,\varphi_n\in\Phi$ such that $\vdash_{s}  (\varphi_1\wedge\ldots\wedge\varphi_n)\rightarrow\varphi$. In addition, the set $\Phi$ is $\mathbf{K}\Lambda$-inconsistent  if $\bot$ is provable from it, otherwise it is $\mathbf{K}\Lambda$-consistent.}
\end{definition}
The completeness of $\mathbf{K}\Lambda$  is proved by using the standard canonical model construction\cite{mspml}. First, we need the following proposition.
\begin{proposition}
\label{suffconstrongcomp}
   {\rm $\mathbf{K}\Lambda$  is strongly complete with respect to a class of models $\mathcal{C}$ iff any  $\mathbf{K}\Lambda$-consistent set of formulas is satisfied in some model from  $\mathcal{C}$.}
\end{proposition}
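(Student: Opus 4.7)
The plan is to prove both directions by contrapositive, using the standard dictionary between syntactic derivability and satisfiability of consistent sets.

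For the forward direction, I would assume strong completeness and take any $\mathbf{K}\Lambda$-consistent set $\Phi \subseteq Fm_s$. Suppose for contradiction that $\Phi$ is not satisfied in any model from $\mathcal{C}$. Then $\Phi \models^{\mathcal{C}}_{s} \bot$ vacuously, since there is no pointed model in $\mathcal{C}$ at which every formula of $\Phi$ is true. By strong completeness, this yields $\Phi \vdash_{s} \bot$, contradicting consistency. So $\Phi$ must be satisfied in some model from $\mathcal{C}$.

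For the backward direction, I would assume that every $\mathbf{K}\Lambda$-consistent set is satisfied in some model from $\mathcal{C}$, and show that $\Phi \models^{\mathcal{C}}_{s} \varphi$ implies $\Phi \vdash_{s} \varphi$. The key auxiliary fact is the standard deduction-style lemma: if $\Phi \not\vdash_{s} \varphi$, then $\Phi \cup \{\neg\varphi\}$ is $\mathbf{K}\Lambda$-consistent. This follows from the definition of $\vdash_{s}$ via finite conjunctions together with propositional reasoning, namely that from $\vdash_{s}(\varphi_1 \wedge \cdots \wedge \varphi_n \wedge \neg\varphi) \to \bot$ one obtains $\vdash_{s}(\varphi_1 \wedge \cdots \wedge \varphi_n) \to \varphi$. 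Granting this, I argue by contrapositive: if $\Phi \not\vdash_{s} \varphi$, then $\Phi \cup \{\neg\varphi\}$ is consistent, hence by hypothesis satisfied at some point $w$ of some $\mathfrak{M} \in \mathcal{C}$, giving $\mathfrak{M}, w \models_{s} \Phi$ while $\mathfrak{M}, w \not\models_{s} \varphi$, and therefore $\Phi \not\models^{\mathcal{C}}_{s} \varphi$.

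The proof is essentially routine once the satisfaction and provability relations are in place; there is no real obstacle. The only point requiring a small amount of care is the many-sorted bookkeeping, that is, making sure both $\Phi$ and $\varphi$ belong to the same sort $s$ so that $\neg\varphi$ is well-formed and the local semantic consequence and derivability relations apply uniformly. This is immediate from the conventions in Definition~\ref{satisfiction} and the definition of $\Phi \vdash_{s}\varphi$ given just before the proposition, so no additional machinery is needed.
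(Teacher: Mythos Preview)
Your proof is correct and follows the standard route. Note, however, that the paper does not actually supply a proof of this proposition: it is stated as a preliminary fact (drawn from the reference on many-sorted polyadic modal logic) and then immediately used to motivate the canonical model construction. Since there is no proof in the paper to compare against, your argument stands on its own as the expected textbook verification, and the only subtlety you flag---the sort-matching of $\Phi$ and $\varphi$---is indeed the one point worth mentioning in the many-sorted setting.
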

% \begin{lemma}[Lindenbaum? Lemma]
 %\label{lindenlemma}
  % {\rm \cite{mspml} If $\varphi_{s_{i}}$ is a $\mathbf{K}\Lambda$ consistent set of formulas then there exists a maximally  $\mathbf{K}\Lambda$ consistent set $\varphi_{s_{i}}^{+}$ of formulas such that $\varphi_{s_{i}}\subseteq \varphi_{s_{i}}^{+}$.}
 %\end{lemma}
Then, the canonical model is defined as follows.
\begin{definition}
    \label{canonicalmodel1}
    {\rm The canonical model of $\mathbf{K}\Lambda$ is \[\mathfrak{M}^{\mathbf{K}\Lambda}:=(\{W_{s}^{\mathbf{K}\Lambda}\}_{s\in S}, \{R_{\sigma}^{\mathbf{K}\Lambda}\}_{\sigma\in\Sigma}, V^{\mathbf{K}\Lambda})\] where
    \begin{itemize}
        \item[(a)] for any $s\in S$,  $W_{s}^{\mathbf{K}\Lambda}=\{\Phi\subseteq Fm_{s}~\mid~ \Phi ~\mbox{is maximally} ~\mathbf{K}\Lambda\mbox{-consistent}\}$,
        \item[(b)] for any $\sigma\in \Sigma_{s's}$, $w\in W_{s}^{\mathbf{K}\Lambda}$, and $w'\in W_{s'}^{\mathbf{K}\Lambda}$, $R_{\sigma}^{\mathbf{K}\Lambda}ww'$ iff for any $\psi$, $\psi\in w'$ implies that $\langle\sigma\rangle\psi\in w$.
        \item[(c)] $V^{\mathbf{K}\Lambda}=\{V^{\mathbf{K}\Lambda}_{s}\}$ is the valuation defined by $V_{s}^{\mathbf{K}\Lambda}(p)=\{w\in W_{s}^{\mathbf{K}\Lambda}~\mid~ p\in w\}$ for any $s\in S$ and $p\in P_{s}$.
    \end{itemize}}
\end{definition}
For simplicity, we will omit the superscript $\mathbf{K}\Lambda$ from the canonical model from now on.
\begin{lemma}
\label{truthlemma}
    {\rm (Truth Lemma) If $s\in S$, $\varphi\in Fm_{s}$, $\sigma \in \Sigma_{s's}$ and $w\in W_{s}$ then the following hold:
    \begin{itemize}
        \item[(a)] for any $w'\in W_{s'}$,
 $R_{\sigma}ww'$ iff for any formulas $\psi$, $[\sigma]\psi\in w$ implies $\psi
        \in w'$.
        \item[(b)] If $\langle\sigma\rangle\psi\in w$ then there is $w'\in W_{s'}$ such that $\psi\in w'$ and $R_{\sigma}ww'$.
        \item[(c)] $\mathfrak{M}  , w\models_{s}\varphi$ iff $\varphi\in w$.
    \end{itemize}}
\end{lemma}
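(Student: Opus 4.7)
The proof of the Truth Lemma follows the standard canonical model argument, but care is required to keep track of sorts because $[\sigma]$ and $\langle\sigma\rangle$ relate formulas of possibly different sorts whenever $\sigma\in\Sigma_{s's}$. The overall plan is to prove (a), then use (a) to prove the existence lemma (b), and finally prove (c) by a simultaneous induction on the complexity of $\varphi$ that ranges over all sorts at once.

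For (a), I would argue both implications by contraposition using the Dual$_{\sigma}$ axiom. For the forward direction, suppose $R_{\sigma}ww'$ and $[\sigma]\psi\in w$ but $\psi\notin w'$. Maximal consistency of $w'$ gives $\neg\psi\in w'$, so the definition of $R_{\sigma}$ yields $\langle\sigma\rangle\neg\psi\in w$; by Dual$_{\sigma}$ this is $\neg[\sigma]\psi\in w$, contradicting $[\sigma]\psi\in w$. The converse direction is dual: if the right-hand side holds and $\psi\in w'$ but $\langle\sigma\rangle\psi\notin w$, then $\neg\langle\sigma\rangle\psi\in w$, hence $[\sigma]\neg\psi\in w$ by Dual$_{\sigma}$, so the hypothesis forces $\neg\psi\in w'$, a contradiction.

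For (b), the key is the standard witness construction. Given $\langle\sigma\rangle\psi\in w$, define
\[\Phi:=\{\psi\}\cup\{\chi\in Fm_{s'}\mid [\sigma]\chi\in w\}.\]
I would show $\Phi$ is $\mathbf{K}\Lambda$-consistent; if not, there exist $\chi_{1},\ldots,\chi_{n}$ in $\Phi\setminus\{\psi\}$ with $\vdash_{s'}\chi_{1}\wedge\cdots\wedge\chi_{n}\rightarrow\neg\psi$. Applying the necessitation rule (UG$_{\sigma}$) and then repeatedly the axiom (K$_{\sigma}$), one obtains $\vdash_{s}[\sigma]\chi_{1}\wedge\cdots\wedge[\sigma]\chi_{n}\rightarrow[\sigma]\neg\psi$. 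Since each $[\sigma]\chi_{i}\in w$, we get $[\sigma]\neg\psi\in w$, i.e.\ $\neg\langle\sigma\rangle\psi\in w$ by Dual$_{\sigma}$, contradicting $\langle\sigma\rangle\psi\in w$. A Lindenbaum-style extension of $\Phi$ (working within $Fm_{s'}$) yields a maximally consistent $w'\in W_{s'}$ containing $\psi$; by construction, every $\chi$ with $[\sigma]\chi\in w$ lies in $w'$, so (a) gives $R_{\sigma}ww'$.

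For (c), I would perform induction on $\varphi$, stated uniformly for all $s\in S$. The atomic case is the definition of $V_{s}$; the Boolean cases use maximal consistency of $w$. The crucial modal cases are as follows. If $\varphi=[\sigma]\psi$ with $\sigma\in\Sigma_{s's}$ and $[\sigma]\psi\in w$, then for every $w'\in W_{s'}$ with $R_{\sigma}ww'$, clause (a) gives $\psi\in w'$, so the induction hypothesis applied to $\psi\in Fm_{s'}$ yields $\mathfrak{M},w'\models_{s'}\psi$, hence $\mathfrak{M},w\models_{s}[\sigma]\psi$. Conversely, if $[\sigma]\psi\notin w$, then $\langle\sigma\rangle\neg\psi\in w$ by Dual$_{\sigma}$ and maximality, and (b) produces a witness $w'$ with $\neg\psi\in w'$ and $R_{\sigma}ww'$; the induction hypothesis then gives $\mathfrak{M},w'\not\models_{s'}\psi$, refuting $[\sigma]\psi$ at $w$. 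The case $\varphi=\langle\sigma\rangle\psi$ is symmetric, using (b) for the forward direction and (a) together with the definition of $R_{\sigma}$ for the backward direction.

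The only genuine obstacle is the existence lemma (b): the argument requires that the necessitation rule and (K$_{\sigma}$) be correctly formulated across sorts so that a proof of $\chi_{1}\wedge\cdots\wedge\chi_{n}\rightarrow\neg\psi$ in sort $s'$ lifts to a proof of $[\sigma]\chi_{1}\wedge\cdots\wedge[\sigma]\chi_{n}\rightarrow[\sigma]\neg\psi$ in sort $s$. Once this sort-bookkeeping is checked against the rules in Figure~\ref{fig1}, the rest of the argument is the standard canonical-model proof transported to the many-sorted setting.
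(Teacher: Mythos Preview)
Your argument is correct and is exactly the standard canonical-model proof for the many-sorted setting; the paper does not supply its own proof of this lemma but simply cites \cite{mspml} for the canonical model construction, and what you have written is the expected unpacking of that reference. The sort-bookkeeping you flag in part (b) is precisely the only point where the many-sorted version differs from the classical one, and you have handled it correctly.
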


 \begin{proposition}
 \label{compcanonic}
     {\rm  If $\Phi_s$ is a $\mathbf{K}\Lambda$-consistent set of formulas then it is satisfied in  the canonical model. }
 \end{proposition}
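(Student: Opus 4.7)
The plan is to invoke a sort-indexed Lindenbaum Lemma to extend $\Phi_s$ to a maximally $\mathbf{K}\Lambda$-consistent set, which then serves as a world of the canonical model at which every formula of $\Phi_s$ is satisfied via the Truth Lemma. This is the standard canonical-model route to completeness, straightforwardly adapted to the many-sorted setting.

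First, I would establish the Lindenbaum-style extension: every $\mathbf{K}\Lambda$-consistent $\Phi_s \subseteq Fm_s$ admits a maximally $\mathbf{K}\Lambda$-consistent extension $\Phi_s^{*} \supseteq \Phi_s$. Fixing an enumeration of $Fm_s$ (or applying Zorn's lemma in the uncountable case), I would inductively build an increasing chain $\Phi_s = \Psi_0 \subseteq \Psi_1 \subseteq \cdots$ by adding the $n$th formula $\varphi_n$ to $\Psi_n$ whenever doing so preserves $\mathbf{K}\Lambda$-consistency, and leaving $\Psi_n$ unchanged otherwise. Set $\Phi_s^{*} := \bigcup_n \Psi_n$. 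Consistency at the limit follows from the finitary character of the proof relation (any derivation uses only finitely many premises), and maximality is immediate from the construction, since for every $\varphi \in Fm_s$ either $\varphi$ was added at its stage or $\Psi_n \cup \{\varphi\}$ was inconsistent, forcing $\neg\varphi \in \Phi_s^{*}$. Crucially, this process operates entirely within sort $s$, so the many-sorted signature introduces no new difficulty at this step.

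Second, by Definition \ref{canonicalmodel1}(a) we have $\Phi_s^{*} \in W_s$, so $\Phi_s^{*}$ is a world of the canonical model $\mathfrak{M}$. Applying Lemma \ref{truthlemma}(c), we obtain $\mathfrak{M}, \Phi_s^{*} \models_s \varphi$ iff $\varphi \in \Phi_s^{*}$ for every $\varphi \in Fm_s$. Since $\Phi_s \subseteq \Phi_s^{*}$, every formula of $\Phi_s$ is satisfied at $\Phi_s^{*}$, which is exactly the proposition's conclusion. Combined with Proposition \ref{suffconstrongcomp}, this yields strong completeness of $\mathbf{K}\Lambda$ with respect to the class of models containing the canonical model (in particular $\mathbb{MSF}$ in the base case where $\Lambda = \emptyset$).

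I do not anticipate a genuine obstacle: the substantive modal-logical content has already been discharged in the Truth Lemma, which supplies the bridge between membership in a maximally consistent set and semantic satisfaction. The only point requiring care is verifying that the Lindenbaum construction respects sort boundaries, which it does automatically, because $\mathbf{K}\Lambda$-consistency of a set $\Phi_s \subseteq Fm_s$ is measured entirely inside the axiomatic system $\mathbf{K}_s \cup \Lambda_s$ acting on formulas of sort $s$ and never reaches into $Fm_{s'}$ for $s' \neq s$. Hence the single-sorted argument transfers without modification.
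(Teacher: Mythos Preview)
Your proposal is correct and follows the standard canonical-model route: Lindenbaum extension to a maximally consistent set of sort $s$, identification of that set with a world in $W_s$, and an appeal to the Truth Lemma. The paper does not supply its own proof of this proposition; it is stated as a preliminary result from \cite{mspml}, and your argument is precisely the expected one underlying that citation.
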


These results lead to the soundness and completeness of $\mathbf{K}$ directly.
\begin{theorem}\label{mainsoundcomplete}
 {\rm  $\mathbf{K}$ is sound and strongly complete with respect to the class of all $(S,\Sigma)$-models, i.e. for any $s\in S$ and $\Phi\cup\{\varphi\}\subseteq Fm_{s}$, $\Phi\vdash_{s}\varphi$ iff $\Phi\models_{s}\varphi$. }
\end{theorem}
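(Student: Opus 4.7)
The plan is to prove the two directions separately: soundness ($\Phi\vdash_s\varphi\Longrightarrow\Phi\models_s\varphi$) by a straightforward induction on the length of proofs, and strong completeness ($\Phi\models_s\varphi\Longrightarrow\Phi\vdash_s\varphi$) by assembling the machinery already introduced — Proposition \ref{suffconstrongcomp}, the canonical model of Definition \ref{canonicalmodel1}, the Truth Lemma (Lemma \ref{truthlemma}), and Proposition \ref{compcanonic}.

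For soundness, I would first verify that every axiom of $\mathbf{K}_s$ is valid on the class of all $(S,\Sigma)$-models. Propositional tautologies (PL) hold at every world by clauses (1)--(3) of Definition \ref{satisfiction}. For (K$_\sigma$) with $\sigma\in\Sigma_{s's}$, I would fix a model $\mathfrak{M}$ and a world $w\in W_s$ satisfying $[\sigma](\varphi_{s'}\rightarrow\psi_{s'})$ and $[\sigma]\varphi_{s'}$, then for any $w'\in W_{s'}$ with $R_\sigma w w'$ combine the two universal statements from clause (5) to get $\mathfrak{M},w'\models_{s'}\psi_{s'}$. The duality axiom (Dual$_\sigma$) follows immediately from clauses (4) and (5), which are set-theoretic duals. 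For the rules, (MP)$_s$ preserves validity at a point by clause (2), and (UG$_\sigma$) preserves validity because if $\varphi_{s'}$ holds at every world in every model, then clause (5) places no constraint that could fail at any $w\in W_s$. Induction on proof length then gives $\vdash_s\varphi\Longrightarrow\models_s\varphi$, and the standard deduction-theorem reformulation built into the definition of $\Phi\vdash_s\varphi$ (as existence of $\varphi_1,\ldots,\varphi_n\in\Phi$ with $\vdash_s(\varphi_1\wedge\cdots\wedge\varphi_n)\rightarrow\varphi$) upgrades this to strong soundness.

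For completeness, I would argue by contraposition together with Proposition \ref{suffconstrongcomp}. Suppose $\Phi\not\vdash_s\varphi$; then $\Phi\cup\{\neg\varphi\}$ is $\mathbf{K}$-consistent (otherwise we could derive $\neg\varphi\rightarrow\bot$, hence $\varphi$, from a finite subset of $\Phi$). By Proposition \ref{compcanonic}, this consistent set is satisfied at some world $w\in W_s$ of the canonical model $\mathfrak{M}$. By the Truth Lemma, clause (c), $\mathfrak{M},w\models_s\Phi$ and $\mathfrak{M},w\not\models_s\varphi$, witnessing $\Phi\not\models_s\varphi$. Together with soundness this gives $\Phi\vdash_s\varphi\Longleftrightarrow\Phi\models_s\varphi$.

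The only non-routine step in this assembly is the sort-indexed Lindenbaum extension lurking inside Proposition \ref{compcanonic} and inside the existence clause (b) of the Truth Lemma: given a consistent $\Phi\subseteq Fm_s$, one must extend it to a maximally $\mathbf{K}$-consistent subset of $Fm_s$ using only formulas of the same sort, and, for clause (b), given $w\in W_s$ with $\langle\sigma\rangle\psi\in w$ for $\sigma\in\Sigma_{s's}$, one must produce a maximally consistent $w'\in W_{s'}$ containing $\psi$ such that $[\sigma]\chi\in w$ implies $\chi\in w'$ for every $\chi\in Fm_{s'}$. In the many-sorted setting the obstacle is purely bookkeeping — one enumerates $Fm_{s'}$ (not $Fm_s$), and verifies that the set $\{\psi\}\cup\{\chi\mid[\sigma]\chi\in w\}$ is $\mathbf{K}$-consistent using (K$_\sigma$), (Dual$_\sigma$) and (UG$_\sigma$) to push any putative inconsistency back across $\sigma$ into $w$. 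Since these facts are precisely what Lemma \ref{truthlemma} and Proposition \ref{compcanonic} already record, the theorem reduces to combining them with Proposition \ref{suffconstrongcomp}, and no further work is required beyond the soundness check above.
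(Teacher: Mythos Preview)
Your proposal is correct and follows exactly the approach the paper indicates: the paper does not give a detailed proof but simply states that ``These results lead to the soundness and completeness of $\mathbf{K}$ directly,'' referring to Proposition~\ref{suffconstrongcomp}, the canonical model construction, the Truth Lemma~\ref{truthlemma}, and Proposition~\ref{compcanonic}. Your assembly of these ingredients, together with the routine soundness verification, is precisely what is intended.
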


\subsection{Formal and rough concept analysis }
In addition to formal concept lattices mentioned in Section \ref{intro}, the set $\mathbf{O}(\mathbb{K})$ of all object oriented concepts and the set $\mathbf{P}(\mathbb{K})$ of all property oriented concepts also form complete lattices,  which are called {\it object oriented concept lattices} and {\it property oriented concept lattices}, respectively.

 For concept lattices ${\mathcal X}=\mathbf{B}(\mathbb{K})$, $\mathbf{O}(\mathbb{K})$,
 $\mathbf{P}(\mathbb{K})$, the set of all extents and intents of $\mathcal X$  are denoted  by  $\mathcal{X}_{ext}$ and $\mathcal{X}_{int}$, respectively.

\begin{proposition}
    {\rm For a context $\mathbb{K}:=(G, M, I)$, the following holds.
    \begin{itemize}
        \item[(a)] $\mathbf{B}(\mathbb{K})_{ext}=\{ A\subseteq G\mid A^{+-}=A\}$ and $\mathbf{B}(\mathbb{K})_{int}=\{B\subseteq M\mid B^{-+}=B\}$.
        \item[(b)] $\mathbf{P}(\mathbb{K})_{ext}=\{ A\subseteq G\mid A^{\diamondg\boxm}=A\}$ and $\mathbf{P}(\mathbb{K})_{int}=\{B\subseteq M\mid B^{\boxm\diamondg}=B\}$.
        \item[(c)] $\mathbf{O}(\mathbb{K})_{ext}=\{A\subseteq G\mid A^{\boxg\diamondm}=A\}$ and $\mathbf{O}(\mathbb{K})_{int}=\{ B\subseteq M\mid B^{\diamondm\boxg}=B\}$.
    \end{itemize}}
\end{proposition}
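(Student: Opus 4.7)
The plan is uniform across the three parts: each asserts that the extents (respectively intents) of a given family of concepts are precisely the fixed points of a certain composition of two operators. All three reduce to the same two-direction set-equality argument: one direction simply unfolds the defining Galois or adjoint condition on a concept pair, while the other direction exhibits an explicit witness pair.

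For part (a) I would first show $\mathbf{B}(\mathbb{K})_{ext} \subseteq \{A \subseteq G \mid A^{+-} = A\}$. Given any $(A,B)\in\mathbf{B}(\mathbb{K})$, the defining conditions $A^{+} = B$ and $B^{-} = A$ give $A^{+-} = (A^{+})^{-} = B^{-} = A$. For the reverse inclusion, given $A$ with $A^{+-} = A$, I would exhibit the pair $(A, A^{+})$: the condition $A^{+} = A^{+}$ is trivial, and $(A^{+})^{-} = A$ is precisely the hypothesis, so $(A, A^{+})$ is a formal concept with extent $A$. The intent characterization follows by the symmetric argument starting from $B$ with $B^{-+} = B$ and using the witness $(B^{-}, B)$.

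Parts (b) and (c) follow the same template with the operators changed. For (b), a property oriented concept $(A,B)$ satisfies $A^{\diamondg} = B$ and $B^{\boxm} = A$, so $A^{\diamondg\boxm} = B^{\boxm} = A$; conversely, if $A^{\diamondg\boxm} = A$, then $(A, A^{\diamondg})$ is a property oriented concept, because $(A^{\diamondg})^{\boxm} = A$ by hypothesis. The intent side uses the witness $(B^{\boxm}, B)$ starting from $B$ with $B^{\boxm\diamondg} = B$. Part (c) is analogous with $\boxg$ and $\diamondm$ in place of $\diamondg$ and $\boxm$, and witnesses $(A, A^{\boxg})$ and $(B^{\diamondm}, B)$.

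No step is a genuine obstacle; the argument is essentially definitional. The only conceptual content worth flagging in the write-up is that the three characterizations reflect the underlying (anti)adjunctions: $(+,-)$ is a Galois connection, while $(\diamondg,\boxm)$ and $(\diamondm,\boxg)$ are isotone adjoint pairs, so the compositions $A \mapsto A^{+-}$ and $A \mapsto A^{\diamondg\boxm}$ are closure operators on $\mathcal{P}(G)$, whereas $A \mapsto A^{\boxg\diamondm}$ is an interior operator on $\mathcal{P}(G)$. In every case the fixed points are exactly the stable subsets that can serve as extents, and the dual statements for intents on $\mathcal{P}(M)$ are obtained in the same fashion by swapping the roles of the two sorts.
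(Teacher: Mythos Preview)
Your argument is correct and is exactly the standard one. The paper itself states this proposition without proof, treating it as a well-known characterization; your two-direction set equality via witness pairs is precisely the intended elementary verification.
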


It can be shown that the sets $\mathbf{B}(\mathbb{K})_{ext}, \mathbf{O}(\mathbb{K})_{ext}$ and $\mathbf{P}(\mathbb{K})_{ext}$ form complete lattices and are isomorphic to the corresponding concept  lattices. Analogously, the sets $\mathbf{B}(\mathbb{K})_{int}, \mathbf{O}(\mathbb{K})_{int}$ and $\mathbf{P}(\mathbb{K})_{int}$ form complete lattices and are dually isomorphic to the corresponding concept lattices. Therefore, a concept can be identified with its extent or intent. The relationship among the three kinds of concept lattices are investigated in \cite{yao2004comparative}. %It is shown that object oriented concept lattices are dually isomorphic to concept lattices, while property
%oriented concept lattices are isomorphic to concept lattices.* The relation between concept lattice is established using the following relation among the operators +, -, $\diamondg$, $\diamondm$, $\boxg$ and $\boxm$.
%\begin{proposition}{\rm \cite{yao2004comparative}
%\label{property of box}
%Let $(G, M, I)$ be a context, $A\subseteq G$ and  $B\subseteq M$.
%$A^{\boxm}_{I}=A^{c+}_{-I}; %B^{\boxg}_{I}= B^{c-}_{-I}$ and $A^{\diamondm}_{I}=A^{+c}_{-I}; B^{\diamondg}_{I}= B^{-c}_{-I}$, where $-R$ is complement of $R$ in $G\times M$.}
%\end{proposition}
In particular, the following theorem is proved.
\begin{theorem}
    {\rm \cite{yao2004comparative} For a context $\mathbb{K}=(G, M, I)$ and the complemented context $\overline{\mathbb{K}}=(G, M, \overline{I})$, the following holds.
    \begin{itemize}
        \item[(a)]  $\mathbf{B}(\mathbb{K})$ is isomorphic to  $\mathbf{P}(\overline{\mathbb{K}})$.
        \item[(b)] $\mathbf{P}(\mathbb{K})$. is dually  isomorphic to $\mathbf{O}(\mathbb{K})$.
        \item[(c)] $\mathbf{B}(\mathbb{K})$ is dually isomorphic to $\mathbf{O}(\overline{\mathbb{K}})$.
    \end{itemize}}
\end{theorem}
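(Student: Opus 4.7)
The plan is to construct each (dual) isomorphism explicitly by toggling complementation on an appropriate coordinate, and then verify order preservation or reversal. A single preliminary calculation reduces the entire theorem to routine checks.

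First I would record two identity pairs that bridge the relevant operators. Since $gIm$ iff $\neg(g\overline{I}m)$, unfolding the definitions yields, for all $A\subseteq G$ and $B\subseteq M$,
\[
A^{+}=\overline{A_{\overline{I}}^{\diamondg}},\qquad B^{-}=\overline{B}_{\overline{I}}^{\boxm},
\]
i.e., the polarity operators of $\mathbb{K}$ are the complements of the approximation operators of $\overline{\mathbb{K}}$. Within a fixed context, complementation on the argument swaps box and diamond:
\[
\overline{A}^{\boxg}=\overline{A^{\diamondg}},\qquad \overline{B}^{\diamondm}=\overline{B^{\boxm}}.
\]

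For part~(a), I would set $\Phi(A,B):=(A,\overline{B})$ for $(A,B)\in\mathbf{B}(\mathbb{K})$. The first pair of identities shows $A^{+}=B$ iff $A_{\overline{I}}^{\diamondg}=\overline{B}$ and $B^{-}=A$ iff $\overline{B}_{\overline{I}}^{\boxm}=A$, so $\Phi$ is a well-defined bijection onto $\mathbf{P}(\overline{\mathbb{K}})$ with self-inverse $(A,B)\mapsto(A,\overline{B})$; since both lattice orders are determined by inclusion of extents, $\Phi$ is an order isomorphism. For part~(b), define $\Psi(A,B):=(\overline{A},\overline{B})$. Using the second pair of identities, $(\overline{A},\overline{B})\in\mathbf{O}(\mathbb{K})$ iff $(A,B)\in\mathbf{P}(\mathbb{K})$, and because $A_{1}\subseteq A_{2}$ iff $\overline{A_{1}}\supseteq\overline{A_{2}}$, $\Psi$ reverses the inclusion-on-extents order and is a dual isomorphism. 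Part~(c) then follows by composing (a) with (b) applied to $\overline{\mathbb{K}}$: $\mathbf{B}(\mathbb{K})\cong\mathbf{P}(\overline{\mathbb{K}})$, which is dually isomorphic to $\mathbf{O}(\overline{\mathbb{K}})$.

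The only real pitfall is the bookkeeping of which relation the operators are being computed with, since the notation $+, -, \boxg, \diamondg, \boxm, \diamondm$ suppresses the underlying context; once the two pairs of identities above are in place and one fixes the convention, every remaining step is a one-line set-theoretic verification.
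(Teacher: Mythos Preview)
The paper does not supply its own proof of this theorem; it is quoted in the preliminaries section as a known result from \cite{yao2004comparative}, so there is no in-paper argument to compare against. Your proof is correct and is essentially the standard one: the two identity pairs you record are exactly the right bridges, the explicit bijections $(A,B)\mapsto(A,\overline{B})$ and $(A,B)\mapsto(\overline{A},\overline{B})$ do the work for (a) and (b), and deriving (c) by composition is the clean way to finish.
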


To deal with the negation of formal concepts, Wille~\cite{wille} introduced semiconcepts and protoconcepts. In line with that, object oriented semiconcepts  and  protoconcept  were  also introduced in \cite{MR4566932}.
\begin{definition}
{\rm Let $\mathbb{K}$ be a context. Then, a pair of sets  $(A,B)$ is called 
\begin{enumerate}
    \item a {\it   protoconcept} (resp. {\it semiconcept})   of $\mathbb{K}$  if  $A^{+-}=B^{-}$ (resp. $A^{+}=B$ or $B^{-}=A$). 
    \item an {\it  object oriented protoconcept} (resp. {\it semiconcept}) of $\mathbb{K}$  if  $A^{\boxg\diamondm}=B^{\diamondm}$ (resp. $A^{\boxg}=B$ or $B^{\diamondm}=A$).
\end{enumerate}}
\end{definition}

As mentioned in Section \ref{intro}, the notions of semiconcepts and protoconcepts lead to the study of dBa and pure dBa.
\begin{definition}
	\label{DBA}
	{\rm \cite{wille} An  algebra $  \textbf{D}:= (D,\sqcup, \sqcap, \bar{\neg},\lrcorner,\top,\bot)$ satisfying the following properties is called a  {\it double Boolean algebra} (dBa). For any $x,y,z \in D$,
		
		$\begin{array}{ll}
			(1a)  (x \sqcap x ) \sqcap  y = x \sqcap  y  &
			(1b)  (x \sqcup x)\sqcup  y = x \sqcup y \\
			(2a) x\sqcap y = y\sqcap  x  &
			(2b)  x \sqcup   y = y\sqcup   x  \\
			(3a) \bar{\neg} (x \sqcap  x) = \bar{\neg}  x  &
			(3b)  \lrcorner(x \sqcup   x )= \lrcorner x \\
			(4a)  x  \sqcap (x \sqcup y)=x \sqcap  x  &
			(4b)  x \sqcup  (x \sqcap y) = x \sqcup   x \\
			(5a) x \sqcap  (y \bar{\vee} z ) = (x\sqcap  y)\bar{\vee} (x \sqcap  z) &
			(5b)  x \sqcup  (y \bar{\wedge} z) = (x \sqcup  y) \bar{\wedge}  (x \sqcup  z) \\
			(6a)  x \sqcap (x\bar{\vee} y)= x \sqcap  x  &
			(6b)  x\sqcup  (x \bar{\wedge}  y) =x \sqcup   x \\
			(7a)  \bar{\neg}\bar{\neg}(x \sqcap  y)= x \sqcap  y &
			(7b)  \lrcorner\lrcorner(x \sqcup  y) = x\sqcup  y \\
			(8a)  x  \sqcap\bar{\neg}x= \bot &
			(8b)  x \sqcup \lrcorner x = \top  \\
			(9a)  \bar{\neg}\top = \bot  &
			(9b) \lrcorner\bot =\top \\
            (10a)  x \sqcap ( y \sqcap  z) = (x \sqcap  y) \sqcap  z  &
			(10b)  x \sqcup (y \sqcup  z) = (x \sqcup  y)\sqcup  z \\
            (11a) \bar{\neg}\bot = \top \sqcap   \top  &
			(11b) \lrcorner\top =\bot \sqcup  \bot \\
			(12)  (x \sqcap  x) \sqcup (x \sqcap x) = (x \sqcup x) \sqcap (x \sqcup x) &
		\end{array}$
		
		where $ x\bar{\vee} y := \bar{\neg}(\bar{\neg}x \sqcap\bar{\neg}y)$ and 
		$ x \bar{\wedge} y :=\lrcorner(\lrcorner x \sqcup \lrcorner y)$. 
		%A quasi-order (that is reflexive and transitive) relation  $\sqsubseteq$ on $D$ is obtained as:  $x \sqsubseteq y \iff x\sqcap y=x\sqcap x~\mbox{and}~x\sqcup y=y\sqcup y$, for any $x,y \in D$.
If, in addition, for any $x\in D$, $x\sqcap x=x$ or $x\sqcup x=x$ holds, then \textbf{D} is called a {\it pure dBa}. Moreover, if for each $y\in D_{\sqcap}$ and $x\in D_{\sqcup}$ with $y\sqcup y=x\sqcap x$, there is a unique $z\in D$ with $z\sqcap z=y$ and $z\sqcup z=x$, $\textbf{D}$ is called {\it fully contextual}}
\end{definition}

Each dBa $\textbf{D}$ contains two underlying Boolean algebras.
\begin{proposition} 
	\label{pro1}
	{\rm \cite{vormbrock} \noindent 
		For a dBa $\textbf{ D}= (D,\sqcup,\sqcap,\bar{\neg},\lrcorner,\top,\bot)$, let $D_{\sqcap}=\{x\in D\mid x\sqcap x=x\}$ and $D_{\sqcup}=\{x\in D\mid x\sqcup x=x\}$. Then, 
		$\textbf{D}_{\sqcap}:=(D_{\sqcap},\sqcap,\bar{\vee},\bar{\neg},\bot,\bar{\neg}\bot)$  and $\textbf{D}_{\sqcup}:=(D_{\sqcup},\bar{\wedge},\sqcup,\lrcorner,\lrcorner\top,\top)$ are both Boolean algebras.  %whose order relation is the restriction of $\sqsubseteq$ to $D_{\sqcup}$ and it is denoted by $\sqsubseteq_{\sqcup}$.
			%\item $x\sqsubseteq y$ iff $x\sqcap x\sqsubseteq y\sqcap y$ and $x\sqcup x\sqsubseteq y\sqcup y$ for $x,y\in D$, that is, $x_{\sqcap}\sqsubseteq_{\sqcap} y_{\sqcap}$ and $x_{\sqcup}\sqsubseteq y_{\sqcup}$.
	}
\end{proposition}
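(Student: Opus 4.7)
The plan is to prove the claim for $D_\sqcap$; the result for $D_\sqcup$ then follows by a fully dual argument using the $b$-labelled axioms. The proof splits into two parts: first, establishing that $D_\sqcap$ is closed under $\sqcap$, $\bar{\vee}$, $\bar{\neg}$ and contains $\bot$ and $\bar{\neg}\bot$; then, verifying the Boolean algebra axioms on the resulting structure.

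For closure, the key observation is that every element of the form $\bar{\neg}z$ and every element of the form $x \sqcap y$ belongs to $D_\sqcap$. The first follows from (3a) and (7a) by a short chain of rewritings: writing $\bar{\neg}z = \bar{\neg}(z \sqcap z)$ via (3a), applying (7a) to get $\bar{\neg}\bar{\neg}z = z \sqcap z$, and then deducing $\bar{\neg}z \sqcap \bar{\neg}z = \bar{\neg}z$ by another use of (3a) and (7a). The second follows from (1a), (2a), and (10a). Since $\bar{\vee}$ is by definition $\bar{\neg}$ of a $\sqcap$-expression and $\bot = \bar{\neg}\top$ by (9a), closure of all operations and constants is immediate. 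A corollary used throughout the next part is that $\bar{\neg}\bar{\neg}x = x$ for every $x \in D_\sqcap$, which one reads off from (7a) applied to $x = x \sqcap x$.

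For the Boolean structure, I would work with the natural order $x \leq y$ iff $x \sqcap y = x$ and show that $(D_\sqcap, \sqcap, \bar{\vee})$ is a bounded distributive lattice in this order, with $\bar{\neg}$ supplying complements and $\bot$, $\bar{\neg}\bot$ the bounds. Commutativity and associativity of $\sqcap$ come from (2a) and (10a); $\bar{\vee}$ is identified as the join by combining the absorption (6a) with the distributivity (5a), via a short argument showing that any upper bound of $x, y$ must dominate $x \bar{\vee} y$. Distributivity of $\bar{\vee}$ over $\sqcap$ then follows from (5a) by standard lattice theory. For complementation, (8a) gives $x \sqcap \bar{\neg}x = \bot$ directly, while the dual identity $x \bar{\vee} \bar{\neg}x = \bar{\neg}\bot$ comes from unfolding $\bar{\vee}$ and using $\bar{\neg}\bar{\neg}x = x$ together with (8a) again.

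The main obstacle is bookkeeping rather than conceptual: one must carefully derive the secondary absorption $x \bar{\vee} (x \sqcap y) = x$ (not available as an axiom) and verify that $\bot$ and $\bar{\neg}\bot$ really are the lattice extrema, both of which fall out once the lattice structure is in hand. The proof for $(D_\sqcup, \bar{\wedge}, \sqcup, \lrcorner, \lrcorner\top, \top)$ runs symmetrically with $\sqcap$ and $\sqcup$ interchanged and $\lrcorner$ playing the role of $\bar{\neg}$, drawing on the $b$-labelled axioms in place of their $a$-counterparts.
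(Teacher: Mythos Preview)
The paper does not prove this proposition at all: it is quoted from \cite{vormbrock} and stated without proof, as a known structural fact about double Boolean algebras that is then used in Section~\ref{representation of dBa}. So there is no ``paper's own proof'' to compare against.

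That said, your sketch is a correct and standard way to establish the result. The closure arguments are right (indeed, once you know every $\bar{\neg}z$ lies in $D_\sqcap$, closure under $\sqcap$ follows immediately from (7a), since $x\sqcap y=\bar{\neg}\bar{\neg}(x\sqcap y)$ is of the form $\bar{\neg}w$). The verification that $(D_\sqcap,\sqcap,\bar{\vee})$ is a bounded distributive lattice via (2a), (10a), (5a), (6a) and that $\bar{\neg}$ supplies complements via (8a) and the involution $\bar{\neg}\bar{\neg}x=x$ on $D_\sqcap$ is exactly how the argument goes in the cited source. The ``secondary absorption'' and the extremality of $\bot,\bar{\neg}\bot$ you flag as bookkeeping are indeed automatic once the lattice structure is in place, and the dual argument for $D_\sqcup$ from the $b$-axioms is entirely symmetric. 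Your plan would yield a complete proof.
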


\section{Two-sorted Modal Logic for Formal Contexts}\label{sec2}
In this section, we introduce the logics \textbf{KB} and \textbf{KF}, along with their connections to rough and formal concepts. The results, along with detailed proofs, have been published in \cite{HowladerL23}. To ensure this paper is self-contained, we will include the key definitions and theorems from the previous work, though we will omit the proofs. 

\subsection{Two-sorted modal logic for rough concepts}
To represent rough concepts, we consider a particular two-sorted signature with only two unary modalities  $(\{s_{1}, s_{2}\}, \{\diamondg,\diamondm\})$ where the arity of $\diamondg$ and $\diamondm$ are $s_1s_2$ and $s_2s_1$, respectively. As usual, we denote dual modalities of $\diamondg$ and $\diamondm$ by $\boxg$ and $\boxm$, respectively.  We use $Fm(\textbf{KB}):=\{Fm(\textbf{KB})_{s_{1}}, Fm(\textbf{KB})_{s_{2}}\}$ to denote the indexed family of formulas built over this signature\footnote{To simplify the notation, we will directly write modal formulas as $\Diamond_i\varphi$ and $\Box_i\varphi$ instead of $\langle\Diamond_i\rangle\varphi$ and $[\Diamond_i]\varphi$ for $i=\mathfrak{o,p}$.}.

This modal language is interpreted in context-based models. For a context $\mathbb{K}=(G, M, I)$, a $\mathbb{K}$-based model is a quadruple $\mathfrak{M}:=(G, M, I, v)$, where $v$ is the valuation. Then, we define the satisfaction of modal formulas in the model by
\begin{enumerate}
    \item $\mathfrak{M},g\models_{s_1}\diamondm\varphi$ iff there is $m\in M$ such that $(g,m)\in I$ and $\mathfrak{M},m\models_{s_2}\varphi$,
    \item $\mathfrak{M},m\models_{s_2}\diamondg\varphi$ iff there is $g\in G$ such that $(g,m)\in I$ and $\mathfrak{M},g\models_{s_1}\varphi$.
\end{enumerate}
Hence, in terms of the general framework of many-sorted modal logic, the model is based on a special kind of two-sorted frames $\mathfrak{F}=(W_1,W_2, R_{\diamondg}, R_{\diamondm})$, where $W_1=G, W_2=M, R_{\diamondg}=I^{-1}$, and $R_{\diamondm}=I$. This kind of frame is called {\em bidirectional} because the two binary relations are converse to each other. We also say that $\mathfrak{F}$ is the corresponding frame of the context $(G,M,I)$ and any models based on it.
We use $\mathbb{CM}$ and $2\mathbb{SBF}$ to denote the classes of all context-based models  and  their corresponding  frames, respectively. 

The logic system over the particular signature is $\mathbf{KB}$ given in Figure \ref{KB}.
%consisting of the system $\mathbf{K}$ (instantiated to the current signature) and the additional axiom (B) as follows:

%The logic system $\mathbf{KF}:=\{\mathbf{KF}_{s_{1}}, \mathbf{KF}_{s_{2}} \}$ is shown in Figure~\ref{fig2}.
\begin{figure}[htp]\centering
	\framebox[135mm] {\parbox{125mm}{\begin{enumerate}
\item Axioms
\begin{enumerate}
    \item (PL): All propositional tautologies of sort $s_{i}$ for $i=1,2$.
    \item (K$^{1}_\boxg$): $\boxg(\varphi_{1}\rightarrow\psi_{2})\rightarrow(\boxg\varphi_{1}\rightarrow\boxg\psi_{2}) $ for $\phi_{1}, \psi_{2}\in Fm(\mathbf{KB})_{s_{1}}$
    \item (Dual$_\mathfrak{o}$): $\diamondg\varphi_{1}\leftrightarrow\neg\boxg\neg\varphi_{1}$
    \item $\varphi\rightarrow\boxm\diamondg \varphi$ for $\varphi\in Fm(\mathbf{KB})_{s_{1}}$
     \item (K$^{1}_\boxm$): $\boxm(\varphi_{1}\rightarrow\psi_{2})\rightarrow(\boxm\varphi_{1}\rightarrow\boxm\psi_{2}) $ for $\phi_{1}, \psi_{2}\in Fm(\mathbf{KB})_{s_{2}}$
    \item (Dual$_\mathfrak{p}$): $\diamondm\varphi_{1}\leftrightarrow\neg\boxm\neg\varphi_{1}$
    \item  $\varphi\rightarrow \boxg\diamondm\varphi$  for $\varphi\in Fm(\mathbf{KB})_{s_{2}}$
\end{enumerate}
\item  Inference rules:
\begin{enumerate}
    \item (MP)$_{s}$: for $s\in \{s_{1}, s_{2}\}$ and $\varphi,\psi\in Fm_{s}$ \[\infer{\psi}{\varphi, \varphi\rightarrow \psi}\]
    \item (UG$^{1}_\boxg)$: for $\varphi\in Fm(\mathbf{KB})_{s_{1}}$  \[\infer{\boxg\varphi}{\varphi}\]
     \item (UG$^{2}_\boxm)$: for $\varphi\in Fm(\mathbf{KB})_{s_{2}}$  \[\infer{\boxm\varphi}{\varphi}\]
\end{enumerate}
\end{enumerate}
}} \caption{The axiomatic system $\mathbf{KB}$ \label{KB}}
\end{figure}

We prove that $\mathbf{KB}$  is sound and complete with respect to $2\mathbb{SBF}$ (and so $\mathbb{CM}$, too).
As usual, the verification of soundness is straightforward. 
%\begin{proof}
%    The proof 
 %   is straightforward. Here we give the proof for the axiom $\varphi_{s_1}\rightarrow\boxm\diamondg \varphi_{s_1}$. Let $\mathfrak{M}$ be a model based on the frame $(W_1,W_2, R_{\diamondg}, R_{\diamondm})$ defined above and $\mathfrak{M}, w_{1}\models_{s_{1}} \varphi_{s_1}$ for some $w_{1}\in W_{1}$.  Then, for any $w_2\in W_{2}$ such that $R_\diamondm w_{1}w_2$, we have  $\mathfrak{M},w_2\models_{s_2}\diamondg\varphi_{s_1}$  because $R_{\diamondg}w_2w_{1}$ follows from the converse of relation. This leads to $\mathfrak{M}, w_{1}\models_{s_{1}}\boxm\diamondg \varphi_{s_1}$ immediately.
%\end{proof}
The completeness is proved using the canonical model of \textbf{KB}, which is an instance of that constructed in Definition \ref{canonicalmodel1}. That is,
\[\mathfrak{M}^{\textbf{KB}}:=(W_{s_{1}}^\textbf{KB}, W_{s_{2}}^\textbf{KB}, R_\diamondg^\textbf{KB}, R_{\diamondm}^\textbf{KB}, V^\textbf{KB})\] As above, we will omit the superscript \textbf{KB} for simplicity in the presentation of the proof. It is easy to see that the model satisfies the following properties for $x\in W_{s_{1}}$ and $y\in W_{s_{2}}$:
\begin{itemize}
 \item[(a)] $R_{\diamondg}yx$ iff $\varphi\in x$ implies that $\diamondg\varphi\in y$ for any $\varphi\in Fm(\mathbf{KB})_{s_1}$.
\item[(b)] $R_{\diamondm}xy$ iff $\varphi\in y$ implies that $\diamondm\varphi\in x$ for any $\varphi\in Fm(\mathbf{KB})_{s_2}$.
\end{itemize}
From these properties, we can derive the following theorem.
\begin{theorem}\label{pmbdlc}
{\rm $\mathbf{KB}$ is sound and strongly complete with respect to $2\mathbb{SBF}$, that is for any $s\in \{s_{1}, s_{2}\}$, $\varphi\in Fm(\mathbf{KB})_{s}$  and $\Phi\subseteq Fm(\mathbf{KB})_{s}$,  $\Phi\models^{2\mathbb{SBF}} _{s}\varphi$ iff  $\Phi\vdash_{s}^{\mathbf{KB}}\varphi$. }
\end{theorem}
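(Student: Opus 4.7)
The plan is to instantiate the general canonical-model machinery of Section \ref{mspml} for the specific signature $(\{s_1,s_2\},\{\diamondg,\diamondm\})$ and then upgrade Proposition \ref{compcanonic} from arbitrary many-sorted frames to bidirectional ones. Since by Proposition \ref{suffconstrongcomp} strong completeness with respect to $2\mathbb{SBF}$ is equivalent to showing that every $\mathbf{KB}$-consistent set is satisfied in some bidirectional two-sorted model, it suffices to verify that the canonical model $\mathfrak{M}^{\mathbf{KB}}$ itself lies in $\mathbb{CM}$ (equivalently, that $R_{\diamondm}=R_{\diamondg}^{-1}$); the satisfiability of any consistent $\Phi$ in this model is then inherited from the general case via the Truth Lemma.

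Soundness is routine: the axioms (K$^1_\boxg$), (K$^1_\boxm$), (Dual$_\mathfrak{o}$), (Dual$_\mathfrak{p}$) and the rules (MP), (UG$^1_\boxg$), (UG$^2_\boxm$) are sound on every many-sorted frame. The only new axioms to check are $\varphi\rightarrow\boxm\diamondg\varphi$ and $\varphi\rightarrow\boxg\diamondm\varphi$. These are the classical tense-logic axioms and are valid on any frame whose two relations are converses of each other: if $\mathfrak{M},g\models_{s_1}\varphi$ and $(g,m)\in R_{\diamondm}=I$, then $(m,g)\in R_{\diamondg}=I^{-1}$, whence $\mathfrak{M},m\models_{s_2}\diamondg\varphi$, and therefore $\mathfrak{M},g\models_{s_1}\boxm\diamondg\varphi$; the second axiom is symmetric.

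For completeness, the main work is to show that in the canonical model the two accessibility relations are mutually converse. Suppose $x\in W_{s_1}$, $y\in W_{s_2}$, and $R_{\diamondm}xy$. To show $R_{\diamondg}yx$, by characterization (a) above it suffices to check that $\varphi\in x$ implies $\diamondg\varphi\in y$ for every $\varphi\in Fm(\mathbf{KB})_{s_1}$. Fix such a $\varphi\in x$. The tense axiom $\varphi\rightarrow\boxm\diamondg\varphi$ together with maximal consistency of $x$ yields $\boxm\diamondg\varphi\in x$, and then clause (a) of Lemma \ref{truthlemma} applied to $R_{\diamondm}xy$ gives $\diamondg\varphi\in y$. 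The converse inclusion, $R_{\diamondg}yx\Longrightarrow R_{\diamondm}xy$, is proved identically using the symmetric axiom $\varphi\rightarrow\boxg\diamondm\varphi$ for $\varphi\in Fm(\mathbf{KB})_{s_2}$. Consequently the canonical frame is bidirectional, so defining $I:=R_{\diamondm}$ recovers a genuine formal context.

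The rest is bookkeeping: the Truth Lemma \ref{truthlemma}(c) guarantees that any $\mathbf{KB}$-consistent $\Phi\subseteq Fm(\mathbf{KB})_s$ extends to a maximally consistent $\Phi^*\in W_s$ which satisfies exactly its own formulas in $\mathfrak{M}^{\mathbf{KB}}$. Combined with the bidirectionality just established, this places $\mathfrak{M}^{\mathbf{KB}}$ in $\mathbb{CM}$ and invokes Proposition \ref{suffconstrongcomp} to conclude strong completeness. The only non-mechanical step is the converse-relation argument in the previous paragraph; once the tense axioms are on the table, however, this is the standard Lemmon-style trick adapted to the two-sorted setting, so no new conceptual obstacle is expected.
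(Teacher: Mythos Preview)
Your proposal is correct and follows essentially the same approach as the paper: instantiate the general canonical-model construction of Definition~\ref{canonicalmodel1}, use the characterizations (a) and (b) of $R_{\diamondg}$ and $R_{\diamondm}$ displayed just before Theorem~\ref{pmbdlc}, and verify via the tense axioms $\varphi\rightarrow\boxm\diamondg\varphi$ and $\varphi\rightarrow\boxg\diamondm\varphi$ that the two canonical relations are mutual converses. The paper omits the details (deferring to \cite{HowladerL23}), but the sketch it gives is exactly what you have spelled out.
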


%\begin{proof}
 %   It is sufficient to show that the canonical model is indeed bidirectional and hence corresponds to a context-based one. Then, the result follows from Propositions~\ref{suffconstrongcomp} and \ref{compcanonic}. Let $x\in W_{s_{1}}$  and $y\in W_{s_{2}}$ and assume $R_{\diamondg}yx$. Then, for any $\varphi\in y$, we have $\boxg\diamondm\varphi\in y$ by axiom (B), which in turns implies $\diamondm\varphi\in x$ by Lemma \ref{truthlemma}. Hence, $ R_{\diamondm}xy$ by property (b) of the canonical model. Analogously, we can show that $ R_{\diamondm}xy$ implies $ R_{\diamondg}yx$. That is, $R_{\diamondm}$ is indeed the converse of $R_{\diamondg}$.
%\end{proof}

\begin{example}
	\label{example1}
	{\rm In a typical application of FCA to association rule mining, a formal context $(G,M, I)$ can represent transaction data, where $G$ is the set of customers, $M$ is the set of product items, and for $g\in G$, and $m\in M$, $gIm$ means that the customer $g$ has bought the item $m$. Let $\mathfrak{M}=(G,M,I,v)$ be a context-based model, and let $\varphi\in Fm(\textbf{KB})_{s_{1}}$ and $\psi\in Fm(\textbf{KB})_{s_{2}}$ be formulas representing customers in the 30 to 50 age group and item set of electronic products, respectively. Then,  the formulas $\boxg \varphi$ and $\boxm \psi$ in the model $\mathfrak{M}$ may be interpreted as follows.
	\begin{itemize}
			\item For $g\in G$, $\mathfrak{M}, g\models_{s_{1}} \boxm\psi$ means that all items bought by $g$ are electronic products, or the customer $g$ only buys electronic products.
		\item For $m\in M$, $\mathfrak{M}, m\models_{s_{2}} \boxg\varphi$  means that all  customers buying $m$ are in the 30 to 50 age group.
\end{itemize} $\Box$ }
	\end{example}

 %Let us denote the truth set of a formula $\varphi\in Fm(\textbf{KB})_{s_{i}} (i=1, 2)$ in a model $\mathfrak{M}$ by $[\!\![\varphi]\!\!]_\mathfrak{M}:=\{w\in W_{i}\mid \mathfrak{M}, w\models_{s_{i}} \varphi\}$. We usually omit the subscript and simply write it as $[\!\![\varphi]\!\!]$.

%\begin{proposition}
 % {\rm  Let $\mathfrak{M}:=(G,M,I,v)$ be a context-based model. Then, the relationship between approximation operators and modal formulas is as follows:
  %\begin{itemize}
 %\item[(i)] $[\!\![\varphi]\!\!]^{\diamondg}=[\!\![\diamondg\varphi]\!\!]$ and $[\!\![\varphi]\!\!]^{\boxg}=[\!\![\boxg\varphi]\!\!]$ for $\varphi\in Fm(\textbf{KB})_{s_1}$.
%\item[(ii)] $[\!\![\varphi]\!\!]^{\diamondm}=[\!\![\diamondm\varphi]\!\!]$ and $[\!\![\varphi]\!\!]^{\boxm}=[\!\![\boxm\varphi]\!\!]$ for  $\varphi\in Fm(\textbf{KB})_{s_2}$.
  %\end{itemize}}
%\end{proposition}

\subsection{Two sorted modal logic for formal concepts}
\label{KF}
To represent formal concepts, we consider another two-sorted signature $(\{s_{1}, s_{2}\}, \{\boxng, \boxnm\}\})$, where $\Sigma_{s_{1}s_{2}}=\{\boxng\}$ and $\Sigma_{s_{2}s_{1}}=\{\boxnm\}$, and the logic  \textbf{KF} based on it. Syntactically, the signature is the same as that for $\mathbf{KB}$ except we use different symbols to denote the modalities. Hence, formation rules of formulas remain unchanged and we denote the indexed family of formulas by $Fm(\textbf{KF})=\{Fm(\textbf{KF})_{s_{1}}, Fm(\textbf{KF})_{s_{2}}\}$.
%such that $\Sigma= \Sigma_{1}\cup\Sigma_{2}$, $\Sigma_{1}\cap \Sigma_{2}=\emptyset$ and $\Sigma_{2}:=\{\diamondm: \mbox{for each}~ \diamondg\in \Sigma_{1}\}$
%and $P=\{P_{s_{1}}, P_{s_{2}}\}$. Moreover
In addition, while both \textbf{KF} and $\mathbf{KB}$ are interpreted in context-based models, the main difference between them is on the way of their modalities being interpreted.
\begin{definition}\label{windosatis}
{\rm Let $\mathfrak{M}:=(G,M,I,v)$ be a context-based model. Then, %where $-I$ and $-I^{-1}$ are the complement of $I, I^{-1}$ respectively.
\begin{itemize}
     \item[(a)] For $\varphi\in Fm(\textbf{KF})_{s_{1}}$ and $m\in M$, $\mathfrak{M},m \models_{s_{2}}\boxng\varphi$ iff for any $g\in G$, $\mathfrak{M},g\models_{s_{1}}\varphi$ implies $Igm$
    \item[(b)] For $\varphi\in Fm(\textbf{KF})_{s_{2}}$ and $g\in G$, $\mathfrak{M},g \models_{s_{1}}\boxnm\varphi$ iff for any $m\in M$, $\mathfrak{M},m\models_{s_{2}}\varphi$ implies $Igm$
\end{itemize}}
\end{definition}

The logic system $\mathbf{KF}:=\{\mathbf{KF}_{s_{1}}, \mathbf{KF}_{s_{2}} \}$ is shown in Figure~\ref{fig2}.
\begin{figure}[htp]\centering
	\framebox[135mm] {\parbox{125mm}{\begin{enumerate}
    \item Axioms:
\begin{itemize}
	\item[(PL)] Propositional tautologies of sort $s_i$ for $i=1,2$.
	\item [$(K_{\boxng}^{1})$] $\boxng(\varphi_{1}\wedge\neg \varphi_{2})\rightarrow (\boxng\neg\varphi_{1}\rightarrow \boxng\neg\varphi_{2})$ for $\varphi_{1},\varphi_{2}\in Fm(\mathbf{KF})_{s_1}$
    \item [$(B^{1})$]$ \varphi\rightarrow \boxnm\boxng \varphi$ for $\varphi\in Fm(\mathbf{KF})_{s_1}$
     \item [$(K_{\boxnm}^{2})$] $\boxnm(\psi_{1}\wedge \neg\psi_{2})\rightarrow (\boxnm\neg\psi_{1} \rightarrow \boxnm\neg\psi_{2})$ for $\psi_{1},\psi_{2}\in Fm(\mathbf{KF})_{s_2}$
  \item [$(B^{2})$]  $\psi\rightarrow \boxng\boxnm\psi$ for $\psi\in Fm(\mathbf{KF})_{s_2}$
  \end{itemize}
\item Inference rules:
\begin{itemize}
    \item $(MP)_{s}$: for $s\in \{s_{1}, s_{2}\}$ and $\varphi,\psi\in Fm_{s}$ 
    \[\infer{\psi}{\varphi, \varphi\rightarrow \psi}\]
    \item $(UG^{1}_{\boxng})$: for $\varphi\in Fm(\textbf{KF})_{s_{1}}$,
	 \[\infer{{\boxng}\varphi}{\neg\varphi}\] 
  \item $(UG^{2}_{\boxnm})$: for $\psi\in Fm(\textbf{KF})_{s_{2}}$, 
  \[\infer{\boxnm\psi}{\neg\psi}\]
\end{itemize}
\end{enumerate}
}} \caption{The axiomatic system \textbf{KF}}\label{fig2}
\end{figure}
%we consider the sets of formula $Fm_{\textbf{KF}}$ and $Fm_{\mathbf{KB}}$.
\begin{example}
\label{exm1}
	{\rm Continuing with Example \ref{example1}, let $\varphi\in Fm(\textbf{KF})_{s_{1}}$ and $\psi \in Fm(\textbf{KF})_{s_{2}}$ remain unchanged. Then, the intuitive meanings of  the formulas $\boxng\varphi$ and $\boxnm\psi$ are as follows:
		\begin{itemize}
			\item For $g\in G$, $\mathfrak{M}, g\models_{s_{1}} \boxnm\psi$,  $g$ bought all electronic products.
			\item For $m\in M$, $\mathfrak{M}, m\models_{s_{2}} \boxng\varphi$, all customers in the 30 to 50 age group  bought $m$.  $\Box$
		\end{itemize}}		
	\end{example}
We define a translation $\rho:Fm(\textbf{KF})\rightarrow Fm(\mathbf{KB})$ where $\rho=\{\rho_{1}, \rho_{2}\}$ as follows:
\begin{enumerate}
\item $\rho_{i}(p):=p$ for all $p\in P_{s_{i}}$ ($i=1,2$).
\item $\rho_{i}(\varphi\wedge\psi):=\rho_{i}(\varphi)\wedge\rho_{i}(\psi)$ for $\varphi, \psi\in Fm(\textbf{KF})_{s_i}$ ($i=1,2$).
\item $\rho_{i}(\neg\varphi):=\neg\rho_{i}(\varphi)$ for $\varphi\in Fm(\textbf{KF})_{s_i}$ ($i=1,2$).
\item $\rho_{2}(\boxng\varphi):=\boxg\neg\rho_{1}(\varphi)$ for $\varphi\in  Fm(\textbf{KF})_{s_{1}}$.
\item $\rho_{1}(\boxnm\varphi):=\boxm\neg\rho_{2}(\varphi)$ for $\varphi\in Fm(\textbf{KF})_{s_{2}}$.
\end{enumerate}
It is also possible to define an inverse translation from $Fm(\mathbf{KB})$ to $Fm(\mathbf{KF})$. However, we only need the above-defined translation to prove the completeness of \textbf{KF} using that of \textbf{KB}. Based on the translation, we have the following correspondence theorem.
\begin{theorem}\label{translation}
          {\rm For any formula $\varphi\in Fm(\textbf{KF})_{s_i}(i=1,2)$  the following hold.
          \begin{itemize}
              \item[(a)] $\Phi\vdash^{\textbf{KF}}\varphi$ iff $\rho(\Phi)\vdash^\mathbf{KB}\rho(\varphi)$ for any $\Phi\subseteq Fm(\textbf{KF})_{s_i}$.
              \item[(b)]  Let $\mathfrak{M}:=(G,M,I,v)$ be a context-based model and let $\overline{\mathfrak{M}}:=(G,M,\overline{I}, v)$ be its complemented model. Then, for any  $g\in G, \mathfrak{M}, g\models_{s_1} \varphi$ iff $\overline{\mathfrak{M}}, g\models_{s_1} \rho_1(\varphi)$, and  for any  $m\in M, \mathfrak{M}, m\models_{s_2} \varphi$ iff $\overline{\mathfrak{M}}, m\models_{s_2} \rho_2(\varphi)$
              \item[(c)] $\varphi$ is valid in the class $\mathbb{CM}$ iff $\rho(\varphi)$ is valid in $\mathbb{CM}$.
          \end{itemize}
           }
\end{theorem}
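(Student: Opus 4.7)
The plan is to establish (b) first, derive (c) as a direct corollary, and then tackle (a), since (b) is the semantic bridge on which everything else rests. For (b), I would proceed by structural induction on $\varphi$. The atomic, conjunction, and negation cases are immediate because $\rho$ commutes with $\wedge$ and $\neg$ and the two models $\mathfrak{M}$ and $\overline{\mathfrak{M}}$ share the same universes and valuation. The content is in the modal cases. Taking $\varphi=\boxng\psi$ evaluated at $m\in M$, Definition \ref{windosatis} unfolds it to ``every $g\in G$ satisfying $\psi$ in $\mathfrak{M}$ has $gIm$.'' In the bidirectional frame associated with $\overline{\mathfrak{M}}$ we have $R_{\diamondg}=(\overline{I})^{-1}$, so evaluating $\rho_2(\boxng\psi)=\boxg\neg\rho_1(\psi)$ at $m$ yields ``every $g$ with $(g,m)\notin I$ refutes $\rho_1(\psi)$ in $\overline{\mathfrak{M}}$.'' The induction hypothesis converts refutation of $\rho_1(\psi)$ in $\overline{\mathfrak{M}}$ into refutation of $\psi$ in $\mathfrak{M}$, and the two statements then match by contraposition. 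The case $\varphi=\boxnm\psi$ is symmetric.

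Once (b) is in place, (c) is immediate: the map $\mathfrak{M}\mapsto\overline{\mathfrak{M}}$ is an involution on $\mathbb{CM}$, so quantifying over $\mathfrak{M}\in\mathbb{CM}$ on the $\varphi$-side of (b) is the same as quantifying over $\mathfrak{N}\in\mathbb{CM}$ on the $\rho(\varphi)$-side.

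For the forward direction of (a) I would induct on the length of a \textbf{KF}-derivation and verify that the image under $\rho$ of each \textbf{KF}-axiom is a \textbf{KB}-theorem. Pushing the negations inward, $\rho_2(K^1_{\boxng})$ rewrites as $\boxg(\rho_1(\varphi_1)\rightarrow\rho_1(\varphi_2))\rightarrow(\boxg\rho_1(\varphi_1)\rightarrow\boxg\rho_1(\varphi_2))$, an instance of $K^1_{\boxg}$; $\rho_1(B^1)$ becomes $\rho_1(\varphi)\rightarrow\boxm\neg\boxg\neg\rho_1(\varphi)$, which (using Dual$_{\mathfrak{o}}$) is exactly the \textbf{KB}-axiom $\rho_1(\varphi)\rightarrow\boxm\diamondg\rho_1(\varphi)$; the other two modal axioms are symmetric. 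Modus ponens is preserved since $\rho$ commutes with implication, and $UG^1_{\boxng}$ (infer $\boxng\varphi$ from $\neg\varphi$) becomes ``infer $\boxg\neg\rho_1(\varphi)$ from $\neg\rho_1(\varphi)$,'' which is exactly $UG^1_{\boxg}$.

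The main obstacle will be the converse direction of (a). The cleanest semantic route uses soundness of \textbf{KB} to turn $\rho(\Phi)\vdash^{\mathbf{KB}}\rho(\varphi)$ into $\rho(\Phi)\models^{\mathbb{CM}}\rho(\varphi)$, applies (b) to transfer this to $\Phi\models^{\mathbb{CM}}\varphi$ in \textbf{KF}-semantics, and finally invokes strong completeness of \textbf{KF} with respect to $\mathbb{CM}$. Care must be taken here, since the paper also intends to derive completeness of \textbf{KF} from completeness of \textbf{KB} via this very correspondence, and one must avoid circularity, presumably by constructing a canonical model for \textbf{KF} independently or by bootstrapping through the forward direction plus a Henkin-style argument. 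A purely syntactic alternative, consistent with the paper's remark that an inverse translation exists, is to define $\iota:Fm(\mathbf{KB})\rightarrow Fm(\mathbf{KF})$ by $\iota_2(\boxg\varphi)=\boxng\neg\iota_1(\varphi)$ and $\iota_1(\boxm\varphi)=\boxnm\neg\iota_2(\varphi)$ (with the Boolean cases commuting), verify that $\iota\circ\rho$ is the identity on $Fm(\mathbf{KF})$ up to propositional equivalence, check that each \textbf{KB}-axiom translates under $\iota$ to a \textbf{KF}-theorem, and then lift a \textbf{KB}-derivation term-by-term to a \textbf{KF}-derivation. Either way, the subtlety is in keeping the syntactic and semantic strands from chasing each other in a circle.
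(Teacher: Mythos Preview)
The paper does not actually prove Theorem~\ref{translation}: Section~\ref{sec2} explicitly states that these results and their detailed proofs appear in \cite{HowladerL23} and are omitted here. So there is no proof in the present paper to compare against.

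That said, your plan is sound and matches what one would expect the cited proof to do. Part (b) by structural induction is straightforward and your treatment of the modal step is correct. Part (c) follows exactly as you say, since $\mathfrak{M}\mapsto\overline{\mathfrak{M}}$ is an involution on $\mathbb{CM}$. For (a), your forward direction is correct; note only that the $\rho$-images of the \textbf{KF} axioms are not \emph{literally} \textbf{KB} axioms but \textbf{KB}-provably equivalent to them (e.g.\ $\rho_2(\boxng\neg\varphi_1)=\boxg\neg\neg\rho_1(\varphi_1)$ rather than $\boxg\rho_1(\varphi_1)$), so a short replacement argument is needed at each step.

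Your diagnosis of the converse direction of (a) is exactly right, and this is the one place where a careless argument would go wrong. The paper explicitly derives the completeness of \textbf{KF} \emph{from} Theorem~\ref{translation} (see the sentence following Theorem~\ref{translation}), so you cannot invoke \textbf{KF}-completeness to close (a). The paper also remarks that ``It is also possible to define an inverse translation from $Fm(\mathbf{KB})$ to $Fm(\mathbf{KF})$,'' which is precisely your second, purely syntactic route: define $\iota$ as you indicate, check that $\iota$ carries each \textbf{KB} axiom and rule to a \textbf{KF} theorem or admissible rule, and use that $\iota\circ\rho$ is the identity up to \textbf{KF}-provable equivalence. This avoids the circularity cleanly and is the intended mechanism.
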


%\begin{proof}
%\begin{itemize}
%\item [(a).] We can prove it by showing that $\varphi$ is an axiom  in \textbf{KF} iff $\rho(\varphi)$ is an axiom in $\mathbf{KB}$, and for each rule in \textbf{KF}, there is a translation of it in $\mathbf{KB}$ and vice verse. 
%\item [(b).] By induction on the complexity of formulas, as usual, the proof of basis and Boolean cases are straightforward.
%For $\varphi=\boxng\psi$, let us assume any $m\in M$.  Then, by Definition \ref{windosatis}, $\mathfrak{M}, m\models_{s_{2}}\boxng\psi$ iff for all $g\in G$, $\overline{I}gm$ implies that $\mathfrak{M}, g\models_{s_{1}}\neg\psi$. By induction hypothesis, this means that for all $g\in G$, $\overline{I}gm$ implies that $\overline{\mathfrak{M}}, g\models_{s_{1}}\neg\rho_1(\psi)$. That is, $\overline{\mathfrak{M}}, m\models_{s_{2}}\boxg\neg\rho_1(\psi)$. By the definition of $\rho$, this is exactly  $\overline{\mathfrak{M}}, m\models_{s_{2}} \rho_2(\varphi)$. The case of $\varphi=\boxnm\psi$ is proved analogously. 
%\item  [(c).] This follows immediately from (b).
%\end{itemize}
%\end{proof}

 Using the  correspondence theorem, we can derive the following proposition and the completeness of {\bf KF}.
 \begin{proposition}\label{needproflattic}\badstart
{\rm \begin{itemize}
         \item[(a).] For $\varphi_{1}, \varphi_{2}\in Fm(\textbf{KF})_{s_{1}}$, If $\vdash^{\textbf{KF}} \varphi_{1}\rightarrow\varphi_{2}$ then $\vdash^{\textbf{KF}} \boxng\varphi_{2}\rightarrow\boxng\varphi_{1}$

        \item[(b).] For $\varphi_{1}, \varphi_{2}\in Fm(\textbf{KF})_{s_2}$, If $\vdash^{\textbf{KF}} \varphi_{1}\rightarrow\varphi_{2}$ then $\vdash^{\textbf{KF}} \boxnm\varphi_{2}\rightarrow\boxnm\varphi_{1}$
     \end{itemize}}
 \end{proposition}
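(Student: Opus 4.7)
The cleanest route is through the correspondence theorem (Theorem~\ref{translation}), which already packages the ``change of coordinates'' from window modalities to ordinary box modalities. The plan is to reduce the antitonicity of $\boxng$ (respectively $\boxnm$) in \textbf{KF} to the standard monotonicity of $\boxg$ (respectively $\boxm$) in $\mathbf{KB}$, and then to translate back.

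For part (a), I would proceed as follows. Starting from the hypothesis $\vdash^{\textbf{KF}} \varphi_1 \rightarrow \varphi_2$ (at sort $s_1$), Theorem~\ref{translation}(a) (taking the empty premise set) gives $\vdash^{\mathbf{KB}} \rho_1(\varphi_1) \rightarrow \rho_1(\varphi_2)$. Propositional contraposition in $\mathbf{KB}$ then yields $\vdash^{\mathbf{KB}} \neg\rho_1(\varphi_2) \rightarrow \neg\rho_1(\varphi_1)$. Now, the standard normal-modal-logic derivation inside $\mathbf{KB}$ that combines the necessitation rule $(UG^{1}_{\boxg})$ with the distribution axiom $(K^{1}_{\boxg})$ upgrades this to $\vdash^{\mathbf{KB}} \boxg\neg\rho_1(\varphi_2) \rightarrow \boxg\neg\rho_1(\varphi_1)$. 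By the defining clause of the translation, $\rho_2(\boxng\varphi_i) = \boxg\neg\rho_1(\varphi_i)$, so this last derivability is literally $\vdash^{\mathbf{KB}} \rho_2(\boxng\varphi_2 \rightarrow \boxng\varphi_1)$. Applying the converse direction of Theorem~\ref{translation}(a) at sort $s_2$ then delivers the desired $\vdash^{\textbf{KF}} \boxng\varphi_2 \rightarrow \boxng\varphi_1$.

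Part (b) is entirely symmetric: swap the sorts $s_1 \leftrightarrow s_2$ and the pairs of modalities $\boxg \leftrightarrow \boxm$ and $\boxng \leftrightarrow \boxnm$, and invoke instead the translation clause $\rho_1(\boxnm\varphi) = \boxm\neg\rho_2(\varphi)$, together with the rule $(UG^{2}_{\boxm})$ and the axiom $(K^{1}_{\boxm})$.

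There is essentially no real obstacle once the correspondence theorem is in hand. The only bookkeeping point is the observation that $\rho$ commutes with all Boolean connectives, so that $\rho(\chi \rightarrow \xi) = \rho(\chi) \rightarrow \rho(\xi)$, which is what lets the $\mathbf{KB}$-proof be recognized as the $\rho$-image of an \textbf{KF}-proof (and conversely). A direct Hilbert-style derivation inside \textbf{KF}, by substituting into $(K^{1}_{\boxng})$ and applying $(UG^{1}_{\boxng})$, is also possible but then requires a separate side-lemma establishing $\vdash^{\textbf{KF}} \boxng\neg\neg\varphi \leftrightarrow \boxng\varphi$ (to absorb the double negations introduced by the substitution), which makes that route noticeably less clean than the route via $\mathbf{KB}$.
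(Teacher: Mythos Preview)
Your proposal is correct and follows essentially the same approach the paper indicates: the paper does not spell out the argument but states that the proposition is derived ``using the correspondence theorem,'' and your reduction via $\rho$ to the monotonicity of $\boxg$ (resp.\ $\boxm$) in $\mathbf{KB}$ is exactly how that derivation goes.
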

%\begin{proof}
 %   We only prove (a) and the proof of (b) is similar.
  %   \begin{align*}
   %  \vdash^{\textbf{KF}}& \varphi_{1}\rightarrow\varphi_{2}\\
    % \vdash^{\mathbf{KB}}& \rho(\varphi_{1})\rightarrow\rho(\varphi_{2})
     %(\mbox{\rm Theorem~\ref{translation} (a)})\\ 
      %  \vdash^{\mathbf{KB}}& (\rho(\varphi_{1})\rightarrow\rho(\varphi_{2}))\rightarrow(\neg\rho(\varphi_{2})\rightarrow \neg\rho(\varphi_{1}))(\mbox{\rm PL})\\
       %  \vdash^{\mathbf{KB}}& \neg\rho(\varphi_{2})\rightarrow \neg\rho(\varphi_{1})(\mbox{\rm MP})\\
        % \vdash^{\mathbf{KB}}& \boxg(\neg\rho(\varphi_{2})\rightarrow \neg\rho(\varphi_{1}))(\mbox{\rm UG})\\
       %  \vdash^{\mathbf{KB}}& \boxg(\neg\rho(\varphi_{2})\rightarrow \neg\rho(\varphi_{1}))\rightarrow (\boxg\neg\rho(\varphi_{2})\rightarrow\boxg\neg\rho(\varphi_{1}))(\mbox{\rm K})\\
        % \vdash^{\mathbf{KB}}&  \boxg\neg\rho(\varphi_{2})\rightarrow{\boxg}\neg\rho(\varphi_{1})(\mbox{\rm MP})\\
        % \vdash^{\textbf{KF}}& \boxng\varphi_{2}\rightarrow\boxng\varphi_{1} (\mbox{\rm Theorem \ref{translation}(a)})
   % \end{align*}
%\end{proof}
\begin{theorem}
    {\rm \textbf{KF} is sound and strongly complete with respect to the class  $\mathbb{CM}$.}
\end{theorem}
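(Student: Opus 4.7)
My plan is to reduce soundness and strong completeness of $\mathbf{KF}$ to the already established soundness and strong completeness of $\mathbf{KB}$ (Theorem \ref{pmbdlc}) via the correspondence given by Theorem \ref{translation}. Essentially, the translation $\rho$ together with complementation of contexts is a semantics- and provability-preserving bridge between the two logics, so the proof becomes a diagram chase.

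For strong soundness, assume $\Phi \vdash^{\mathbf{KF}} \varphi$. By Theorem \ref{translation}(a) we get $\rho(\Phi) \vdash^{\mathbf{KB}} \rho(\varphi)$, and the strong soundness direction of Theorem \ref{pmbdlc} yields $\rho(\Phi) \models^{\mathbb{CM}} \rho(\varphi)$. Now fix any $\mathfrak{M} \in \mathbb{CM}$ and any world $w$ with $\mathfrak{M}, w \models^{\mathbf{KF}} \Phi$. Applying Theorem \ref{translation}(b) formula-by-formula transfers this to $\overline{\mathfrak{M}}, w \models^{\mathbf{KB}} \rho(\Phi)$. Since $\overline{\mathfrak{M}} \in \mathbb{CM}$, the $\mathbf{KB}$-consequence established above gives $\overline{\mathfrak{M}}, w \models^{\mathbf{KB}} \rho(\varphi)$. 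One more application of Theorem \ref{translation}(b), this time with $\overline{\mathfrak{M}}$ as the base model and using that complementation is an involution ($\overline{\overline{\mathfrak{M}}} = \mathfrak{M}$), yields $\mathfrak{M}, w \models^{\mathbf{KF}} \varphi$, so $\Phi \models^{\mathbb{CM}} \varphi$.

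For strong completeness, the chain runs in reverse. Suppose $\Phi \not\vdash^{\mathbf{KF}} \varphi$. Theorem \ref{translation}(a) delivers $\rho(\Phi) \not\vdash^{\mathbf{KB}} \rho(\varphi)$, and the strong completeness half of Theorem \ref{pmbdlc} supplies a context-based model $\mathfrak{N}$ and a world $w$ such that $\mathfrak{N}, w \models^{\mathbf{KB}} \rho(\Phi)$ but $\mathfrak{N}, w \not\models^{\mathbf{KB}} \rho(\varphi)$. Taking the complemented model $\overline{\mathfrak{N}} \in \mathbb{CM}$, Theorem \ref{translation}(b) applied with $\overline{\mathfrak{N}}$ as the base (so that its complement is $\mathfrak{N}$) converts these $\mathbf{KB}$-satisfactions into $\mathbf{KF}$-satisfactions: we obtain $\overline{\mathfrak{N}}, w \models^{\mathbf{KF}} \Phi$ and $\overline{\mathfrak{N}}, w \not\models^{\mathbf{KF}} \varphi$, so $\Phi \not\models^{\mathbb{CM}} \varphi$.

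Once Theorem \ref{translation} is in hand the proof is essentially bookkeeping, and I do not anticipate a serious obstacle. The only delicate point is tracking the direction of the complementation: Theorem \ref{translation}(b) relates $\mathbf{KF}$-satisfaction at $\mathfrak{M}$ to $\mathbf{KB}$-satisfaction at $\overline{\mathfrak{M}}$, so to run the argument in the opposite direction one must invoke it with the complemented model as the base and rely on $\overline{\overline{\mathfrak{M}}} = \mathfrak{M}$. Proposition \ref{needproflattic} is not needed for this final argument; it is instead one of the monotonicity lemmas required upstream in establishing Theorem \ref{translation}(a).
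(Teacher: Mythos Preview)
Your proposal is correct and follows the same approach as the paper, which derives both soundness and strong completeness of $\mathbf{KF}$ from those of $\mathbf{KB}$ via the correspondence results of Theorem~\ref{translation}. You have simply spelled out the diagram chase that the paper leaves implicit, including the use of the involution $\overline{\overline{\mathfrak{M}}}=\mathfrak{M}$ to run Theorem~\ref{translation}(b) in both directions.
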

%\begin{proof}
 %   This follows from Theorem \ref{translation} and the fact that $
  %  \textbf{KB}$ is sound and strongly complete with respect to $\mathbb{CM}$.
%\end{proof}

%\begin{proposition}
 % {\rm  Recalling the definition of truth set, we have
 % \begin{itemize}
  %    \item[(i)] $[\!\![\boxng\varphi]\!\!]=[\!\![\varphi]\!\!]^{+}$ for $\varphi\in Fm(\textbf{KF})_{s_{1}}$
   %   \item [(ii)] $[\!\![\boxnm\varphi]\!\!]=[\!\![\varphi]\!\!]^{-}$ for $\varphi\in Fm(\textbf{KF})_{s_{2}}$.
      %\item[(ii)] $v_{s_{1}}(\diamondm\varphi)=[\!\![\varphi]\!\!]^{\diamondg}=[\!\![\diamondm\varphi]\!\!]$ and $v_{s_{1}}((\diamondm)^{\boxg}\varphi)=[\!\![\varphi]\!\!]^{\boxg}=[\!\![(\diamondm)^{\boxg}\varphi]\!\!]$ for $\varphi\in Fm_{s_{2}}$.
 % \end{itemize}}
%\end{proposition}

\subsection{Logical representation of three concept lattices}\label{lattice}
Given the logic \textbf{KF} and \textbf{KB}, we can now define logical representations of formal and rough concepts. 
\begin{definition}
    {\rm Let $\mathfrak{C}$ be the bidirectional frame corresponding to the context $\mathbb{K}=(G,M,I)$. Then, we define $Fm_{PC}, Fm_{OC}\subseteq Fm(\textbf{KB})_{s_{1}}\times Fm(\textbf{KB})_{s_2}$ by
\begin{itemize}
\item[(a)] $Fm_{PC}:=\{(\varphi, \psi) \mid \models^\mathfrak{C}_{s_{1}} \varphi\leftrightarrow \boxm\psi, \models^\mathfrak{C}_{s_{2}}  \diamondg\varphi\leftrightarrow\psi\}$
 \item[(b)] $Fm_{OC}:=\{(\varphi, \psi) \mid    \models^\mathfrak{C}_{s_{1}} \varphi\leftrightarrow \diamondm\psi,  \models^\mathfrak{C}_{s_{2}} \boxg\varphi\leftrightarrow \psi\}$
\item[(c)] $Fm_{PC_{ext}}:=\pi_1(Fm_{PC})$ and $Fm_{PC_{int}}:=\pi_2(Fm_{PC})$
\item[(d)] $Fm_{OC_{ext}}:=\pi_1(Fm_{OC})$ and $Fm_{OC_{int}}:=\pi_2(Fm_{OC})$
    \end{itemize}
where $\pi_1$ and $\pi_2$ are projection operators\footnote{That is, for a subset $S\subseteq A\times B$, $\pi_1(S)=\{a\in A\mid \exists b\in B, (a.b)\in S\}$ and $\pi_2(S)=\{b\in B\mid \exists a\in A, (a.b)\in S\}$}.}
\end{definition}
By the definition, we have
$Fm_{PC_{ext}}=\{\varphi\in Fm(\textbf{KB})_{s_1}\mid~ \models^\mathfrak{C}_{s_{1} } \boxm\diamondg\varphi\leftrightarrow\varphi\}$, $Fm_{PC_{int}}=\{\varphi\in  Fm(\textbf{KB})_{s_2}\mid~ \models^\mathfrak{C}_{s_{2} } \diamondg\boxm\varphi\leftrightarrow\varphi\}$, $Fm_{OC_{ext}}=\{\varphi\in  Fm(\textbf{KB})_{s_{1}}\mid ~ \models^\mathfrak{C}_{s_{1} } \diamondm\boxg\varphi\leftrightarrow\varphi\}$, and $Fm_{OC_{int}}=\{\varphi\in  Fm(\textbf{KB})_{s_{2}}\mid ~ \models^\mathfrak{C}_{s_{2} }\boxg \diamondm\varphi\leftrightarrow\varphi\}$.

Note that these sets are implicitly parameterized by the underlying context and should be indexed with $\mathbb{K}$. However, for simplicity, we omit the index. Obviously, when $(\varphi, \psi)\in Fm_{PC}$, we have $([\!\![\varphi]\!\!], [\!\![\psi]\!\!])\in\mathbf{P}(\mathbb{K})$ for any $\mathbb{K}$-based models. Hence, $Fm_{PC}$ consists of pairs of formulas representing property oriented concepts. Analogously, $Fm_{OC}$ provides the representation of object oriented concepts. 

\begin{definition}
    {\rm Let $\mathfrak{C}:=\{(G, M,I^{-1},I)\}$ be the corresponding frame of a context. Then, we define $Fm_{FC}\subseteq Fm(\mathbf{KF})_{s_1}\times Fm(\mathbf{KF})_{s_2}$ by 
    \begin{itemize}
     \item[(a)] $Fm_{FC}:=\{(\varphi, \psi) \mid   \models^\mathfrak{C}_{s_1} \varphi\leftrightarrow \boxnm\psi,  \models^\mathfrak{C}_{s_2}  \boxng\varphi\leftrightarrow \psi\}$
    \item[(b)] $Fm_{FC_{ext}}:=\pi_1(Fm_{FC})$ and $Fm_{FC_{int}}:=\pi_2(Fm_{FC})$
    \end{itemize}}
\end{definition}
From the definition, we can derive $Fm_{FC_{ext}}=\{\varphi\in Fm(\mathbf{KF})_{s_1}\mid ~\models^\mathfrak{C}_{s_1} \boxnm\boxng\varphi\leftrightarrow\varphi\}$ and $Fm_{FC_{int}}=\{\varphi\in Fm(\mathbf{KF})_{s_{2}}\mid ~ \models^\mathfrak{C}_{s_{2} } \boxng\boxnm\varphi\leftrightarrow\varphi\}$. 
Hence, the set $Fm_{FC}$ provides a logical representation of formal concepts induced from the context $(G, M, I)$. The observation suggests the definition below.
\begin{definition}
\label{deflogcept}
          {\rm Let $\varphi\in Fm(\textbf{KB})_{s_1}$, $\psi\in  Fm(\textbf{KB})_{s_2}$,  $\eta\in Fm(\textbf{KF})_{s_1}$, and  $\gamma\in  Fm(\textbf{KF})_{s_2}$. Then, for a context  $\mathbb{K}$, we say that
          \begin{itemize}
              \item[(a)] $(\varphi, \psi)$ is a  {\it (logical) property oriented concept} of $\mathbb{K}$ if $(\varphi, \psi)\in Fm_{PC}$.
               \item[(b)] $(\varphi, \psi)$ is a {\it  (logical) object oriented concept} of $\mathbb{K}$ if $(\varphi, \psi)\in Fm_{OC}$.
                \item[(c)] $(\eta, \gamma)$ is a {\it (logical) formal concept} of $\mathbb{K}$ if $(\eta, \gamma)\in Fm_{FC}$.
          \end{itemize}}
      \end{definition}
We now explore the relationships between the three notions and their properties. In what follows, for a context $\mathbb{K}=(G,M,I)$, we use $\mathfrak{C}_{0}:=(G,M,I^{-1},I)$ and $\mathfrak{C}_{1}:=(G,M, \overline{I^{-1}},\overline{I})$ to denote frames corresponding to $\mathbb{K}$ and $\overline{\mathbb{K}}$, respectively.

Recall that we have defined the set $Fm_X$ for $X=PC,OC, FC$ with respect to a context $\mathbb{K}$, but for simplicity, we do not explicitly attach the context to the  notation. In what follows, we will use $Fm_X^c$ for $X=PC,OC, FC$ to denote the corresponding set defined with respect to $\overline{\mathbb{K}}$.

\begin{proposition}\label{mapconcept}
{\rm  Let $\mathbb{K}:=(G, M, I)$ be a context. Then, 
      \begin{itemize}
          \item[(a)]   $(\varphi, \psi)\in Fm_{PC}$  iff $(\neg\varphi, \neg\psi)\in Fm_{OC}$ for $\varphi\in Fm(\textbf{KB})_{s_1}$ and $\psi\in  Fm(\textbf{KB})_{s_2}$.
          \item[(b)] $(\varphi, \psi))\in Fm_{FC}$  iff $(\rho(\varphi), \neg \rho(\psi))\in Fm^c_{PC}$   for $\varphi\in Fm(\textbf{KF})_{s_1}$ and $\psi\in  Fm(\textbf{KF})_{s_2}$.
          \item[(c)]   $(\varphi,\psi)\in Fm_{FC}$ iff $(\neg\rho(\varphi), \rho(\psi))\in Fm^c_{OC}$  for $\varphi\in Fm(\textbf{KF})_{s_1}$ and $\psi\in  Fm(\textbf{KF})_{s_2}$.
      \end{itemize}}
      \end{proposition}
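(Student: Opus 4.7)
The plan is to prove each of the three equivalences by unfolding the defining conditions of $Fm_{PC}$, $Fm_{OC}$, and $Fm_{FC}$, and then reducing the resulting validities using propositional reasoning, the duality axioms (Dual$_\mathfrak{o}$) and (Dual$_\mathfrak{p}$), and Theorem \ref{translation}. For (a), I would unfold $(\varphi,\psi)\in Fm_{PC}$ to the pair of validities $\models^{\mathfrak{C}_0}_{s_1}\varphi\leftrightarrow\boxm\psi$ and $\models^{\mathfrak{C}_0}_{s_2}\diamondg\varphi\leftrightarrow\psi$, and observe that each biconditional is propositionally equivalent to the biconditional of its two negations. Applying the dualities $\neg\boxm\psi\leftrightarrow\diamondm\neg\psi$ and $\neg\diamondg\varphi\leftrightarrow\boxg\neg\varphi$ (derivable in $\mathbf{KB}$) converts these into exactly the defining conditions of $(\neg\varphi,\neg\psi)\in Fm_{OC}$, and both directions then follow.

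For (b), the key tool is Theorem \ref{translation}(b), which transfers truth in a $\mathbb{K}$-based model $\mathfrak{M}$ to truth of the $\rho$-translation in the complemented model $\overline{\mathfrak{M}}$. Applied to the defining validities of $Fm_{FC}$, the equivalence $\varphi\leftrightarrow\boxnm\psi$ becomes $\rho_1(\varphi)\leftrightarrow\boxm\neg\rho_2(\psi)$, valid in $\mathfrak{C}_1$, which is precisely the first condition for $(\rho(\varphi),\neg\rho(\psi))\in Fm^c_{PC}$. Similarly, $\boxng\varphi\leftrightarrow\psi$ becomes $\boxg\neg\rho_1(\varphi)\leftrightarrow\rho_2(\psi)$, which by (Dual$_\mathfrak{o}$) is equivalent to $\diamondg\rho_1(\varphi)\leftrightarrow\neg\rho_2(\psi)$, the second condition. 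Since each of these reductions is a chain of biconditionals, both directions of (b) come out at once.

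Part (c) then follows by chaining (b) with (a) applied in the complemented frame $\mathfrak{C}_1$: namely, $(\varphi,\psi)\in Fm_{FC}$ iff $(\rho(\varphi),\neg\rho(\psi))\in Fm^c_{PC}$ by (b), iff $(\neg\rho(\varphi),\neg\neg\rho(\psi))\in Fm^c_{OC}$ by (a), which reduces to $(\neg\rho(\varphi),\rho(\psi))\in Fm^c_{OC}$ by double negation. The only genuinely delicate point in the whole argument is the bookkeeping of which frame each validity is taken in --- the original $\mathfrak{C}_0$ for $Fm_{PC},Fm_{OC},Fm_{FC}$ and the complemented $\mathfrak{C}_1$ for $Fm^c_{PC},Fm^c_{OC}$ --- together with the correct invocation of Theorem \ref{translation}(b) to shift between them; once this is fixed, the remainder is just the two modal dualities and propositional double negation.
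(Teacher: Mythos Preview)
Your proof is correct. The paper itself omits the proof of this proposition (Section~\ref{sec2} explicitly states that proofs are left to the cited conference version), so there is no in-paper argument to compare against; your approach---unfolding the defining validities, using the propositional equivalence $\alpha\leftrightarrow\beta\;\Leftrightarrow\;\neg\alpha\leftrightarrow\neg\beta$ together with the $\mathbf{KB}$ dualities for part~(a), invoking Theorem~\ref{translation}(b) and the translation clauses $\rho_1(\boxnm\psi)=\boxm\neg\rho_2(\psi)$, $\rho_2(\boxng\varphi)=\boxg\neg\rho_1(\varphi)$ for part~(b), and then chaining (b) with (a) applied to the complemented context for part~(c)---is exactly the natural route and is sound. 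The one point worth making explicit in a written-out version is that part~(a) is stated for an arbitrary context and therefore applies equally to $\overline{\mathbb{K}}$, which justifies the step $(\rho(\varphi),\neg\rho(\psi))\in Fm^c_{PC}\Leftrightarrow(\neg\rho(\varphi),\neg\neg\rho(\psi))\in Fm^c_{OC}$; you allude to this but it deserves a sentence. The passage from $\neg\neg\rho(\psi)$ to $\rho(\psi)$ is harmless since membership in $Fm^c_{OC}$ is defined by frame validities and is therefore invariant under replacement of a component by a $\mathfrak{C}_1$-equivalent formula.
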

    %  \begin{proof}
     %   \begin{itemize}
      %      \item [(a)] By using  (Dual) and propositional reasoning, we have $(\varphi, \psi)\in Fm_{PC}$ iff $ \models^{\mathfrak{C}_{0}}_{s_{1}} \varphi\leftrightarrow \boxm\psi$ and $ \models^{\mathfrak{C}_{0}}_{s_{2}} \diamondg \varphi\leftrightarrow \psi$ iff $ \models^{\mathfrak{C}_{0}}_{s_{1}} \neg\varphi\leftrightarrow \diamondm\neg\psi$ and $ \models^{\mathfrak{C}_{0}}_{s_{2}} \boxg \neg\varphi\leftrightarrow\neg\psi$ iff  $(\neg\varphi, \neg\psi)\in Fm_{OC}$.
            
%\item [(b)] Because $(\varphi,\psi)$ is a formal concept, we have $\models^{\mathfrak{C}_{0}}_{s_1} \varphi\leftrightarrow\boxnm\psi$ and  $\models^{\mathfrak{C}_{0}}_{s_2}\boxng \varphi\leftrightarrow \psi$. By Theorem \ref{translation} (b), we can derive   $ \models^{\mathfrak{C}_{1}}_{s_{1}} \rho(\varphi)\leftrightarrow \boxm\neg\rho(\psi) $ and $ \models^{\mathfrak{C}_{1}}_{s_{2}}  \diamondg \rho(\varphi)\leftrightarrow \neg\rho(\psi)$. Therefore, $(\rho(\varphi), \neg\rho(\psi))$ is a property oriented concept for $\overline{\mathbb{K}}$. The proof for the converse direction is similar.
%\item [(c)] It  follows from (a) and (b) immediately.
%\end{itemize}
%\end{proof}

Now, we can  define a relation $\equiv_{1}$  on the set  $Fm_{PC}$ as follows:
For $(\varphi, \psi), (\varphi', \psi')\in Fm_{PC}$, $(\varphi, \psi) \equiv_{1}(\varphi', \psi')$ iff $\models^{\mathfrak{C}_{0}} \varphi\leftrightarrow\varphi'$. Analogously, we can define $\equiv_{2}$ and $\equiv_{3}$ on the set $Fm_{OC}$ and $Fm_{FC}$, respectively. Obviously, $\equiv_{1}, \equiv_{2}$ and $\equiv_{3}$ are all  equivalence relations.  We will denote the equivalence class of $\equiv_{i}$ by $[(\varphi, \psi)]_{\equiv_{i}} (i=1,2,3)$ and usually omit the subscript when the equivalence relation is clear.

Let $Fm_{PC}/\equiv_{1}, Fm_{OC}/\equiv_{2}$, and $ Fm_{FC}/\equiv_{3}$ be the sets  of equivalence classes.
Then we can define the following structures:
\begin{itemize}
\item $(Fm_{PC}/\equiv_{1}, \vee_{1}, \wedge_{1})$ where 
\begin{itemize}
\item  $[(\varphi, \psi)]\vee_{1}[(\varphi', \psi')]:= [(\boxm(\psi\wedge\psi'), (\psi\wedge\psi'))]$
\item  $[(\varphi, \psi)]\wedge_{1}[(\varphi', \psi')]:= [(\varphi\wedge\varphi', \diamondg(\varphi\wedge\varphi'))]$
\end{itemize}
\item  $(Fm_{OC}/\equiv_{2}, \vee_{2},  \wedge_{2})$  where 
\begin{itemize}
 \item $[(\varphi, \psi)]\vee_{2}[(\varphi', \psi')]:= [(\diamondm(\psi\wedge\psi'), (\psi\wedge\psi'))]$
\item  $[(\varphi, \psi)]\wedge_{2}[(\varphi', \psi')]:= [(\varphi\wedge\varphi', \boxg(\varphi\wedge\varphi') )]$
\end{itemize}
\item  $(Fm_{FC}/\equiv_{3}, \vee_{3}, \wedge_{3})$  where 
\begin{itemize}
\item $[(\varphi, \psi)]\vee_{3}[(\varphi', \psi']):=[(\boxnm(\psi\wedge\psi'), (\psi\wedge\psi'))]$  
\item  $[(\varphi, \psi)]\wedge_{3}[(\varphi', \psi')]:=[(\varphi\wedge\varphi', \boxng(\varphi\wedge\varphi'))]$
\end{itemize}
\end{itemize}
\begin{theorem}
 {\rm  For a context $\mathbb{K}$, $(Fm_{PC}/\equiv_{1}, \vee_{1}, \wedge_{1})$, $(Fm_{OC}/\equiv_{2}, \vee_{2}, \wedge_{2})$ and $(Fm_{FC}/\equiv_{3}, \vee_{3}, \wedge_{3})$,  are  lattices. }
\end{theorem}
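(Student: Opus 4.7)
My plan is to verify the lattice axioms --- commutativity, associativity, absorption, and idempotency --- for each of the three quotient structures, translating between the syntactic operations and semantic lattice operations on the concept lattices $\mathbf{P}(\mathbb{K})$, $\mathbf{O}(\mathbb{K})$, $\mathbf{B}(\mathbb{K})$ via the soundness and completeness theorems of $\textbf{KB}$ and $\textbf{KF}$. First I would verify well-definedness of the six operations, i.e.\ (i) the output is independent of the representatives chosen and (ii) the output's equivalence class contains a pair that actually lies in the corresponding set $Fm_{PC}$, $Fm_{OC}$, or $Fm_{FC}$. For (i), note that if $(\varphi,\psi)\in Fm_{PC}$ then $\psi\equiv\diamondg\varphi$, so $\varphi\equiv\varphi'$ forces $\psi\equiv\psi'$ through preservation of provable equivalence under $\diamondg$; analogous remarks apply to $Fm_{OC}$ and $Fm_{FC}$. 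For (ii), I would use the Galois-triangle identities derivable in $\textbf{KB}$ and $\textbf{KF}$, such as $\boxm\diamondg\boxm\equiv\boxm$ and $\diamondg\boxm\diamondg\equiv\diamondg$, which guarantee that first coordinates of the form $\boxm(\psi\wedge\psi')$ or $\diamondg(\varphi\wedge\varphi')$ are extents/intents of property-oriented concepts, and similarly for the object-oriented and formal cases.

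For the lattice axioms themselves, commutativity and idempotency reduce immediately to the corresponding identities for propositional $\wedge$, associativity uses in addition that each $\Box$-style modality distributes over $\wedge$ via its $K$-axiom. The subtle case is absorption, e.g.\ $[(\varphi,\psi)]\wedge_1([(\varphi,\psi)]\vee_1[(\varphi',\psi')])=[(\varphi,\psi)]$. I would handle it by rewriting the first coordinate using $\varphi\equiv\boxm\psi$, distributing $\boxm$ over $\wedge$, and applying the extent fixed-point equation $\boxm\diamondg\varphi\equiv\varphi$ to collapse the expression back to $\varphi$. A cleaner, more structural alternative is to exhibit a map from $Fm_{PC}/\equiv_1$ into a product of copies of $\mathbf{P}(\mathbb{K})$ indexed by valuations --- sending $[(\varphi,\psi)]$ to the function $v\mapsto([\![\varphi]\!]_v,[\![\psi]\!]_v)$ --- under which $\vee_1,\wedge_1$ correspond pointwise to the join and meet of $\mathbf{P}(\mathbb{K})$. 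The image is closed under the pointwise operations, so the domain inherits the lattice structure. The same template handles $Fm_{OC}/\equiv_2$ and $Fm_{FC}/\equiv_3$ via $\mathbf{O}(\mathbb{K})$ and $\mathbf{B}(\mathbb{K})$ respectively; these concept lattices were introduced as complete lattices in Section \ref{mspml}.

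The main obstacle will be the absorption laws for $Fm_{PC}/\equiv_1$ and $Fm_{OC}/\equiv_2$, because the interplay between the two modalities and propositional $\wedge$ has to be tracked carefully through the closure and coclosure identities of the underlying Galois adjunctions $\diamondg\dashv\boxm$ and $\diamondm\dashv\boxg$. The formal-concept case $(Fm_{FC}/\equiv_3,\vee_3,\wedge_3)$ is the cleanest since it essentially mirrors the textbook FCA proof that the concept lattice is a lattice, with $\boxng,\boxnm$ playing the role of the Galois polarity $(\cdot)^+,(\cdot)^-$. Opting for the semantic embedding route sidesteps the intricate formula calculations by deferring to the already-established lattice structure of the three concept lattices, and in fact shows at once that all three are complete lattices when valuations are considered.
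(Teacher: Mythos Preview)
The paper does not give a proof of this theorem; it is one of the results inherited from the conference version \cite{HowladerL23} with proofs omitted, so there is no argument here to compare against.

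Your plan is sound for $(Fm_{FC}/\equiv_3,\vee_3,\wedge_3)$, where the printed operations agree with the usual meet and join of $\mathbf{B}(\mathbb{K})$, and either your direct verification or your semantic embedding would succeed. For the other two structures, however, you should be aware that the printed definitions of $\vee_1$ and $\wedge_2$ appear to contain typos that will block both of your approaches. Take $\vee_1$: for $(\varphi,\psi),(\varphi',\psi')\in Fm_{PC}$ one has $\psi\equiv\diamondg\varphi$, and since $\boxm$ commutes with conjunction while $\boxm\diamondg\varphi\equiv\varphi$ for PC-extents, the first coordinate of $\vee_1$ simplifies to $\boxm(\diamondg\varphi\wedge\diamondg\varphi')\equiv\boxm\diamondg\varphi\wedge\boxm\diamondg\varphi'\equiv\varphi\wedge\varphi'$, which is exactly the first coordinate of $\wedge_1$. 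With $\vee_1=\wedge_1$ the absorption law forces the quotient to be a single point, so neither your direct verification nor your semantic embedding can succeed for the operations as literally printed. The intended definition is almost certainly $\vee_1$ with $\psi\vee\psi'$ in place of $\psi\wedge\psi'$: intents of property-oriented concepts are fixed points of the interior operator $\diamondg\boxm$, hence closed under $\vee$, not under $\wedge$. A dual correction (replacing $\varphi\wedge\varphi'$ by $\varphi\vee\varphi'$) is needed in $\wedge_2$ for the object-oriented case, where extents form the interior system. Under those corrected definitions both of your routes go through, and the semantic-embedding argument is indeed the cleaner of the two; note also that with $\vee$ in place, associativity of $\vee_1$ reduces directly to associativity of propositional $\vee$, so the $K$-distribution you anticipate is not actually needed there.
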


Using Proposition \ref{mapconcept}, we have the following.
       \begin{theorem}
       \label{relationlogicalcon}
           {\rm Let us consider a  context $\mathbb{K}$ and its complement $\overline{\mathbb{K}}$. Then, for lattices defined above, we have
           \begin{itemize}
\item[(a)] $(Fm_{FC}/\equiv_{3}, \vee_{3}, \wedge_{3})$ and  $(Fm_{PC}^c/\equiv_{1}, \vee_{1}, \wedge_{1})$ are isomorphic.
\item[(b)] $(Fm_{PC}/\equiv_{1}, \vee_{1}, \wedge_{1})$ and $(Fm_{OC}/\equiv_{2}, \vee_{2}, \wedge_{2})$  are dually isomorphic.
\item[(c)] $(Fm_{FC}/\equiv_{3}, \vee_{3}, \wedge_{3})$ and $(Fm^c_{OC}/\equiv_{2}, \vee_{2}, \wedge_{2})$ are dually isomorphic.
\end{itemize}}
       \end{theorem}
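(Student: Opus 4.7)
The plan is to treat all three parts uniformly: for each, I would define an explicit candidate map guided by the bijective correspondences of Proposition \ref{mapconcept}, show it is well defined on equivalence classes, exhibit an inverse, and then verify it preserves or reverses the lattice operations. The candidate maps are
\[
F_a([(\eta,\gamma)]_{\equiv_3}) := [(\rho_1(\eta),\neg\rho_2(\gamma))]_{\equiv_1}, \quad F_b([(\varphi,\psi)]_{\equiv_1}) := [(\neg\varphi,\neg\psi)]_{\equiv_2},
\]
\[
F_c([(\eta,\gamma)]_{\equiv_3}) := [(\neg\rho_1(\eta),\rho_2(\gamma))]_{\equiv_2},
\]
with the codomains correct by Proposition \ref{mapconcept}(b), (a) and (c), respectively.

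For well-definedness of $F_b$, the equivalence $\models^{\mathfrak{C}_{0}}\varphi\leftrightarrow\varphi'$ is tautologically equivalent to $\models^{\mathfrak{C}_{0}}\neg\varphi\leftrightarrow\neg\varphi'$. For $F_a$ and $F_c$, I would apply Theorem \ref{translation}(b) to transfer any $\mathfrak{C}_{0}$-validity of a \textbf{KF}-formula into a $\mathfrak{C}_{1}$-validity of its $\rho$-image, so that $\equiv_3$-classes are sent into $\equiv_1$- and $\equiv_2$-classes defined over the complementary context. Bijectivity of $F_b$ follows from the obvious involutive inverse $[(\varphi,\psi)]_{\equiv_2}\mapsto [(\neg\varphi,\neg\psi)]_{\equiv_1}$, whereas for $F_a$ and $F_c$ surjectivity reduces, again via Theorem \ref{translation}(b), to the observation that every $\xi\in Fm(\textbf{KB})_s$ is $\mathfrak{C}_{1}$-equivalent to $\rho_s(\eta)$ for a suitable $\eta\in Fm(\textbf{KF})_s$, obtained by rewriting each occurrence of $\boxm$ (resp.\ $\boxg$) in $\xi$ as $\boxnm\neg$ (resp.\ $\boxng\neg$).

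The heart of the argument is showing that the operations are mapped as claimed. For $F_b$, I would compute $F_b([(\varphi,\psi)]\wedge_{1}[(\varphi',\psi')])$ and $F_b([(\varphi,\psi)])\vee_{2}F_b([(\varphi',\psi')])$ by invoking the modal dualities $\neg\boxm\equiv\diamondm\neg$ and $\neg\diamondg\equiv\boxg\neg$ together with the characterizing equivalences $\varphi\leftrightarrow\boxm\psi$ and $\diamondg\varphi\leftrightarrow\psi$ of property oriented concept pairs, and then check that the two sides coincide in $Fm_{OC}/\equiv_2$; the other operation is handled symmetrically. For $F_a$, the translation $\rho$ commutes with the propositional connectives by definition and converts $\boxng$ and $\boxnm$ into $\boxg\neg$ and $\boxm\neg$, so meets and joins defined in $Fm_{FC}/\equiv_3$ translate term by term into meets and joins in $Fm_{PC}^c/\equiv_1$. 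Finally, part (c) can be derived from (a) and (b) by observing that $F_c$ is the composition of $F_a$ with the analogue of $F_b$ built over the complementary context, and a composition of an isomorphism with a dual isomorphism is a dual isomorphism.

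The main obstacle is the operation-preservation step: both sides of each equality involve modal formulas that are only provably, and not literally, equal, and the derivations rely on the defining equivalences of concept pairs together with the modal dualities available in \textbf{KB} and \textbf{KF}. In effect, these are the formula-level counterparts of the set-theoretic duality verifications of \cite{yao2004comparative}, transported through Proposition \ref{mapconcept} and Theorem \ref{translation}, and their soundness ultimately rests on the completeness results already established for \textbf{KB} and \textbf{KF}.
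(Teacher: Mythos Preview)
Your approach is correct and coincides with the paper's: the paper states the theorem immediately after Proposition~\ref{mapconcept} with only the remark ``Using Proposition~\ref{mapconcept}, we have the following'' (the full proof being deferred to~\cite{HowladerL23}), and your maps $F_a,F_b,F_c$ are precisely the bijections supplied by parts (b), (a), (c) of that proposition. Your plan to check well-definedness via Theorem~\ref{translation}(b), surjectivity via the inverse translation the paper mentions, and operation-(anti)preservation via the modal dualities and the defining equivalences of the concept pairs is exactly the intended elaboration.
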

\section{Two-sorted Boolean Modal logic}\label{TWBML}
We have seen that {\bf KB} and {\bf KF} can represent and reason about rough and formal concepts in a context, respectively. However, we can only do that in two separate logical systems. In this section, we develop a uniform framework that can deal with both formal and rough concepts at the same time. By mixing possibility and window modalities together, we can express combinations of both kinds of concepts. Specifically, we consider two-sorted signature $(\{s_{1}, s_{2}\}, \{\boxg, \boxm, \boxng, \boxnm \})$, where $\Sigma_{s_{1}s_{2}}=\{\boxg, \boxng\}$ and $\Sigma_{s_{2}s_{1}}=\{\boxm, \boxnm\}$.  The indexed family of formulas is denoted by $Fm(\textbf{BM}):=\{Fm(\textbf{BM})_{s_i}\mid i=1,2\}$.

%The $\boxg(\boxm)$ makes intuitive sense as "necessity." To put it more clearly,  `$m\models \varphi$ is a necessary condition for $Igm$' is the interpretation of $g\models \boxg\varphi$  for an object $g\in G$ and a property $m\in M$  and $I\subseteq G\times M$. The term "sufficiency" is also used to describe $\boxng(\boxnm)$.  `$m\models \varphi$ is sufficient condition for $Igm$' corresponds to $g\models \boxng\varphi$. One way to reword the phrase  is `$m\models\neg\varphi$ is a necessary condition for  $\neg Igm$'. So we must include the accessibility relation say $I$ and inaccessibility relation $\neg I$ in order to create a model in which the box and window are interpreted simultaneously.  However,  we begin with a more generic situation  and consider a model with two relation $I, J$ such that $I\times J=G\times M$. We subsequently prove that a modally equivalent model with a disjoint relation exists for such a model. 
%The formulas are interpreted in generalized context-based model defined below.

The correspondence theorem in the last section (Theorem~\ref{translation}(b)) shows that box and window modalities in {\bf KB} and  {\bf KF} are  actually interpreted in two separate but mutually complemented context-based models. Hence, to interpret both kinds of modalities simultaneously in {\bf BM}, we need to consider the incidence relation and its complement in the same context-based model. However, to prove the completeness of our logic, we will start with a more general model in which box and window modalities are interpreted with respect to two binary relations that are pseudo-complement to each other. That is, a model with two relations $I, J$ such that $I\cup J=G\times M$ but $I$ and $J$ may be not disjoint. Then, by employing a copying technique introduced in \cite{Gargov1987} to eliminate the overlapping between $I$ and $J$, we can transform it into  a context-based model.

\begin{definition}
    {\rm A {\it generalized context-based model} (or simply generalized model) is a 5-tuple $\mathfrak{M}:=(G, M, I, J, v)$, where  both $(G, M, I)$ and $(G, M, J)$ are contexts,
    $I\cup J=G\times M$, and $v$ is a two-sorted valuation.}
\end{definition}
We define the satisfaction of \textbf{BM} formulas in $\mathfrak{M}$ inductively. For propositional connectives, and modal operators $\boxg, \boxm$, the definition remains the same as that for the satisfaction of \textbf{KB} formulas in the context-based model $\mathfrak{M}:=(G, M, I, v)$. For modal operators $\boxng, \boxnm$, we have
\begin{itemize}
    \item for $\varphi\in Fm(\textbf{BM})_{s_{1}}$ and $m\in M$,  $\mathfrak{M}, m\models_{s_{2}}\boxng\varphi$ iff for all $g\in G$, $\mathfrak{M}, g\models_{s_{1}}\varphi$ implies that $\neg Jgm$;
    \item for $\psi\in Fm(\textbf{BM})_{s_{2}}$ and $g\in G$, $\mathfrak{M}, g\models_{s_{1}}\boxnm\psi$ iff for all $m\in M$, $\mathfrak{M}, m\models_{s_{2}}\psi$ implies that $\neg Jgm$.
\end{itemize}

As in the case of Boolean modal logic \cite{Gargov1987}, we define the following operators: for $\varphi_{1}, \varphi_{2}\in Fm(\textbf{BM})_{s_{1}}$, $N_\mathfrak{o}(\varphi_{1}, \varphi_{2}):=\boxg \varphi_{1}\wedge \boxng \neg \varphi_{2}$ and $[\mathsf{U}_\mathfrak{o}]\varphi_{1}:= N_\mathfrak{o}(\varphi_{1}, \varphi_{1})$. Similarly, for $\psi_{1}, \psi_{2}\in Fm(\textbf{BM})_{s_{2}}$, $N_\mathfrak{p}(\psi_{1}, \psi_{2}):=\boxm \psi_{1}\wedge \boxnm \neg \psi_{2}$ and $[\mathsf{U}_\mathfrak{p}]\psi_{1}:= N_\mathfrak{p}(\psi_{1}, \psi_{1})$. We also define the following operators:  $\oboxg\varphi:=\boxng\neg\varphi$, $\oboxm\psi:=\boxnm\neg\psi$, $\oboxng\varphi:=\boxg\neg\varphi$ and $\oboxnm\psi:=\boxm\neg\psi$

The logic system $\mathbf{BM}:=\{\mathbf{BM}_{s_{1}}, \mathbf{BM}_{s_{2}} \}$ consist of all axioms and inference rules of \textbf{KF} and \textbf{KB} as well as the axioms given  in Figure~\ref{fig3}.
\begin{figure}[htp]\centering
	\framebox[155mm] {\parbox{145mm}{
\begin{itemize}
	\item[($N_\mathfrak{o}$)] $N_\mathfrak{o}(\varphi_{1}, \varphi_{2})\wedge N_\mathfrak{o}(\varphi_{1}\rightarrow\psi_{1},\varphi_{2} \rightarrow \psi_{2})\rightarrow N_\mathfrak{o}(\psi_{1}, \psi_{2})$  for $\varphi_{1},\varphi_{2},\psi_{1}, \psi_{2}\in Fm(\mathbf{BM})_{s_1}$
 \item[($N_\mathfrak{p}$)] $N_\mathfrak{p}(\varphi_{1}, \varphi_{2})\wedge N_\mathfrak{p}(\varphi_{1}\rightarrow \psi_{1}, \varphi_{2} \rightarrow \psi_{2})\rightarrow N_\mathfrak{p}(\psi_{1}, \psi_{2})$ for $\varphi_{1},\varphi_{2},\psi_{1}, \psi_{2}\in Fm(\mathbf{BM})_{s_2}$
	\item[($\mathsf{U}_\mathfrak{o}$)] $[\mathsf{U}_\mathfrak{o}]\varphi\rightarrow [\mathsf{U}_\mathfrak{o}][\mathsf{U}_\mathfrak{p}][\mathsf{U}_\mathfrak{o}]\varphi$, for $\varphi\in Fm(\mathbf{BM})_{s_1}$
 \item[($\mathsf{U}_\mathfrak{p}$)] $[\mathsf{U}_\mathfrak{p}]\varphi\rightarrow [\mathsf{U}_\mathfrak{p}][\mathsf{U}_\mathfrak{o}][\mathsf{U}_\mathfrak{p}]\varphi$, for $\varphi\in Fm(\mathbf{BM})_{s_2}$
  \end{itemize}
}} \caption{Axioms for the system \textbf{BM}}
\label{fig3}
\end{figure}

\begin{lemma}\label{osemantics}
    {\rm Let $\mathfrak{M}=(G, M, I, J, v)$ be a generalized model, $\varphi\in Fm(\textbf{BM})_{s_{1}}$ and $\psi\in Fm(\textbf{BM})_{s_{2}}$. The following hold. 
    \bla
        \item  For $m\in M$, $\mathfrak{M}, m\models_{s_{2}} [\mathsf{U}_\mathfrak{o}]\varphi$ iff for all $g\in G(Igm~\mbox{or}~Jgm\implies \mathfrak{M}, g\models_{s_{1}}\varphi)$.
        \item For $g\in G$, $\mathfrak{M}, g\models_{s_{1}} [\mathsf{U}_\mathfrak{p}]\psi$ iff for all $m\in M (Igm~\mbox{or}~Jgm\implies \mathfrak{M}, m\models_{s_{2}}\psi)$.
        \item  For $m\in M$, $\mathfrak{M}, m\models_{s_{2}}\oboxg\varphi$ iff for all $g\in G$, $(Jgm\implies \mathfrak{M}, g\models_{s_{1}}\varphi)$.
        \item  For $g\in G$, $\mathfrak{M}, g\models_{s_{1}} \oboxm\psi$ iff for all $m\in M$, $(Jgm\implies \mathfrak{M}, m\models_{s_{2}}\psi)$
        \item  For $m\in M$, $\mathfrak{M}, m\models_{s_{2}} \oboxng\varphi$ iff for all $g\in G$, $(\mathfrak{M}, g\models_{s_{1}}\varphi\implies \neg Igm)$.
          \item  For $g\in G$, $\mathfrak{M}, g\models_{s_{1}} \oboxnm\psi$ iff for all $m\in M$, $(\mathfrak{M}, m\models_{s_{2}}\psi\implies \neg Igm)$.
    \ela}
\end{lemma}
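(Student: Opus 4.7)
The plan is to prove all six clauses by straightforward unpacking of the definitions of the compound modalities together with the satisfaction clauses for the primitive ones. I will first make explicit the primitive semantics in a generalized model $\mathfrak{M}=(G,M,I,J,v)$: inheriting from \textbf{KB}, the clause for $\boxg$ reads $\mathfrak{M},m\models_{s_2}\boxg\varphi$ iff every $g\in G$ with $Igm$ satisfies $\varphi$, with the dual clause for $\boxm$; and the \textbf{BM}-specific clauses stated just above the lemma yield $\mathfrak{M},m\models_{s_2}\boxng\varphi$ iff every $g$ satisfying $\varphi$ has $\neg Jgm$, with the dual clause for $\boxnm$.

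For clauses (e) and (f), I would unfold $\oboxng\varphi := \boxg\neg\varphi$ (resp.\ $\oboxnm\psi := \boxm\neg\psi$), apply the $\boxg$ clause (resp.\ $\boxm$ clause) to $\neg\varphi$, and then take the contrapositive of ``$Igm \implies \mathfrak{M},g\models_{s_1}\neg\varphi$'' to obtain ``$\mathfrak{M},g\models_{s_1}\varphi \implies \neg Igm$''. For clauses (c) and (d), I would unfold $\oboxg\varphi := \boxng\neg\varphi$ (resp.\ $\oboxm\psi := \boxnm\neg\psi$), apply the $\boxng$ clause to $\neg\varphi$ to obtain ``$\mathfrak{M},g\models_{s_1}\neg\varphi \implies \neg Jgm$'', and take the contrapositive to obtain the stated ``$Jgm \implies \mathfrak{M},g\models_{s_1}\varphi$''.

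For clause (a) I would expand $[\mathsf{U}_\mathfrak{o}]\varphi = \boxg\varphi \wedge \boxng\neg\varphi$: the first conjunct supplies ``every $g$ with $Igm$ satisfies $\varphi$'', while the second conjunct, via exactly the contrapositive argument already used for (c), supplies ``every $g$ with $Jgm$ satisfies $\varphi$''. Combining the two universal statements by disjoining their antecedents yields the required condition ``$Igm$ or $Jgm$ implies $\mathfrak{M},g\models_{s_1}\varphi$''. Clause (b) is symmetric, with the roles of $G,M$ and of $\boxg/\boxm$, $\boxng/\boxnm$ interchanged.

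No substantive obstacle is expected, since the entire lemma is a routine unpacking of the defined abbreviations. The single delicate point is the asymmetric shape of the window-modality clauses, whose consequents carry a negation on the relation (``$\neg Jgm$'') rather than on the formula; one must be careful to apply the contrapositive in the correct direction whenever a negated formula $\neg\varphi$ is pushed under $\boxng$ or $\boxnm$. Once this is handled uniformly, each of the six clauses reduces to a two- or three-line calculation.
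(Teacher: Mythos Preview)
Your proposal is correct and takes essentially the same approach as the paper: unfold the defined abbreviations, apply the primitive satisfaction clauses for $\boxg,\boxm,\boxng,\boxnm$, and use contraposition on the window-modality clauses to reshape the implications into the stated form. The paper in fact only writes out (a) in detail (exactly as you describe) and dismisses (c)--(f) as following from the definitions, so your treatment is if anything slightly more explicit.
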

\begin{proof}
    Proof of (b) is similar to the proof of (a). Here we give proof for $(a)$. Let $m\in M$. Then,
    \begin{eqnarray*}
  \mathfrak{M}, m\models_{s_{2}} [\mathsf{U}_\mathfrak{o}]\varphi&\iff&\mathfrak{M}, m\models_{s_{2}} \boxg\varphi\wedge \boxng \neg \varphi, ~\mbox{by definition of}~ [\mathsf{U}_\mathfrak{o}].\\
    &\iff& \mathfrak{M}, m\models_{s_{2}} \boxg\varphi ~\mbox{and}~\mathfrak{M}, m\models_{s_{2}}\boxng \neg \varphi.\\
    &\iff& \mbox{for all}~ g\in G, (Igm \implies \mathfrak{M}, g\models_{s_{1}}\varphi) ~\mbox{and for all}~g\in G, (\mathfrak{M}, g\models_{s_{1}}\neg \varphi\implies\neg Jgm).\\
    &\iff& ~\mbox{for all}~ g\in G (Igm\implies\mathfrak{M}, g\models_{s_{1}}\varphi ~\mbox{and}~   Jgm\implies\mathfrak{M}, g\models_{s_{1}}\varphi) ~\mbox{by contraposition}.\\
    &\iff& ~\mbox{for all}~ g\in G (Igm ~or~ Jgm\implies \mathfrak{M}, g\models_{s_{1}}\varphi ).
     \end{eqnarray*}
    Proofs of $(c), (d), (e)$ and $(f)$ follow from  the definition.
\end{proof}

\begin{theorem}
\label{soundness}
    {\rm The logic \textbf{BM} is sound with respect to the class of all  generalized context-based models.}
\end{theorem}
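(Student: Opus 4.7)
The plan is to verify soundness of \textbf{BM} by checking that every axiom is valid in every generalized context-based model and that every inference rule preserves validity over this class. Since the axiom set of \textbf{BM} is the union of the schemes of \textbf{KB}, those of \textbf{KF}, and the four new schemes $(N_\mathfrak{o}), (N_\mathfrak{p}), (\mathsf{U}_\mathfrak{o}), (\mathsf{U}_\mathfrak{p})$, I would split the verification into three blocks corresponding to these groups.

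First, for the \textbf{KB} fragment, the modalities $\boxg$ and $\boxm$ in a generalized model $(G,M,I,J,v)$ are interpreted with respect to $I$ exactly as in a (plain) context-based model. Hence $(K^1_\boxg), (K^1_\boxm)$ and the duality axioms hold by the standard Kripke-style argument, and the bidirectionality axioms $\varphi\to\boxm\diamondg\varphi$ and $\psi\to\boxg\diamondm\psi$ hold because $I^{-1}$ and $I$ are converse relations. Modus ponens and the necessitation rules for $\boxg,\boxm$ are routine.

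Second, for the \textbf{KF} fragment, the window modalities $\boxng,\boxnm$ in \textbf{BM} are defined via the relation $J$ (with the quantifier pattern of a window modality), so I need to re-verify the axioms under this reading. For $(K^1_{\boxng})$, assuming $\boxng(\varphi_1\wedge\neg\varphi_2)$ and $\boxng\neg\varphi_1$ hold at $m$, an arbitrary $g$ satisfying $\neg\varphi_2$ falls into one of the two cases on $\varphi_1$; both cases give $\neg Jgm$, and symmetrically for $(K^2_{\boxnm})$. For $(B^1)$, if $g\models\varphi$ and $m\models\boxng\varphi$, then by definition $\neg Jgm$, so $g\models\boxnm\boxng\varphi$; $(B^2)$ is dual. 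The rule $(UG^1_{\boxng})$ is sound because if $\neg\varphi$ is valid then the antecedent of the defining implication of $\boxng\varphi$ is always false, and likewise for $(UG^2_{\boxnm})$.

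Third, for the new axioms I would appeal directly to Lemma~\ref{osemantics}. The semantic reading of $N_\mathfrak{o}(\varphi_1,\varphi_2)$ at $m$, obtained by unpacking $\boxg\varphi_1\wedge\boxng\neg\varphi_2$, is: every $g$ with $Igm$ satisfies $\varphi_1$ and every $g$ with $Jgm$ satisfies $\varphi_2$. A straightforward case split on $Igm$ versus $Jgm$ using the two conjuncts of the antecedent of $(N_\mathfrak{o})$ yields $N_\mathfrak{o}(\psi_1,\psi_2)$; $(N_\mathfrak{p})$ is symmetric. For $(\mathsf{U}_\mathfrak{o})$, Lemma~\ref{osemantics}(a) gives that $m\models[\mathsf{U}_\mathfrak{o}]\varphi$ iff for every $g$ with $Igm$ or $Jgm$, $g\models\varphi$; since $I\cup J=G\times M$ in a generalized model, this collapses to "for every $g\in G$, $g\models\varphi$", which is independent of $m$. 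The same observation applied twice more shows $[\mathsf{U}_\mathfrak{o}][\mathsf{U}_\mathfrak{p}][\mathsf{U}_\mathfrak{o}]\varphi$ is equivalent to the same global statement, so the implication is valid; $(\mathsf{U}_\mathfrak{p})$ is dual.

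The main obstacle I expect is not any individual computation but bookkeeping about which relation interprets which modality: one must use $I$ for $\boxg,\boxm$ and $J$ for $\boxng,\boxnm$, and the condition $I\cup J=G\times M$ is needed precisely (and only) to collapse $[\mathsf{U}_\mathfrak{o}]$ and $[\mathsf{U}_\mathfrak{p}]$ to global modalities. As long as this separation is respected, all other axiom verifications parallel the arguments already carried out for \textbf{KB} and \textbf{KF}.
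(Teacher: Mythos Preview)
Your proposal is correct and follows essentially the same approach as the paper: both use Lemma~\ref{osemantics} to unpack $N_\mathfrak{o}$ and $[\mathsf{U}_\mathfrak{o}]$, and both rely on $I\cup J=G\times M$ to collapse $[\mathsf{U}_\mathfrak{o}]$ to a global modality. You are in fact slightly more thorough than the paper, which only treats the Figure~\ref{fig3} axioms explicitly and tacitly assumes the \textbf{KB}/\textbf{KF} schemes carry over to the generalized setting; your explicit re-verification of the \textbf{KF} schemes under the $J$-reading is a welcome addition.
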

\begin{proof}
      To prove this theorem it is sufficient to show that all the axiom in Figure \ref{fig3} are  valid.  Proofs for  $(N_\mathfrak{p})$ and $(\mathsf{U}_\mathfrak{p})$ are similar to those for $(N_\mathfrak{o})$ and $(\mathsf{U}_\mathfrak{o})$, respectively. Hence, we only present proofs for $(N_\mathfrak{o})$ and $(\mathsf{U}_\mathfrak{o})$.   
      
      For the former, let $\mathfrak{M}=(G,M,I,J,v)$ be a generalized model. Then, for any $m\in M$, $\mathfrak{M}, m\models_{s_{2}} N_\mathfrak{o}(\varphi_{1}, \varphi_{2})\wedge N_\mathfrak{o}(\varphi_{1}\rightarrow \psi_{1}, \varphi_{2} \rightarrow \psi_{2})$ implies that $\mathfrak{M}, m\models_{s_{2}}\boxg\varphi_{1}\wedge\boxng\neg \varphi_{2}$ and $\mathfrak{M}, m\models_{s_{2}}\boxg (\varphi_{1}\rightarrow \psi_{1})\wedge\boxng\neg (\varphi_{2}\rightarrow\psi_{2})$ by definition. This is equivalent to $\mathfrak{M}, m\models_{s_{2}}\boxg\varphi_{1}$, $\mathfrak{M}, m\models_{s_{2}}\oboxg\varphi_{2}$, $\mathfrak{M}, m\models_{s_{2}}\boxg (\varphi_{1}\rightarrow \psi_{1})$ and $\mathfrak{M}, m\models_{s_{2}}\oboxg(\varphi_{2}\rightarrow\psi_{2})$. By Lemma~\ref{osemantics}, we have $\mathfrak{M}, m\models_{s_{2}}\boxg\psi_{1}\wedge\oboxg\psi_{2}$, which is equivalent to $\mathfrak{M},m\models_{s_2} N_\mathfrak{o}(\psi_{1}, \psi_{2})$.

       For the latter, assume  $\mathfrak{M}, m_0\models_{s_{2}} [\mathsf{U}_\mathfrak{o}]\varphi$ for some $m_0\in M$. Then, by Lemma~\ref{osemantics}, for all $g\in G$ ($Igm_0$ or $Jgm_0 \implies \mathfrak{M},g\models_{s_{1}}\varphi$), which implies that for all $g\in G$, $\mathfrak{M},g\models_{s_{1}}\varphi$ as $I\cup J=G\times M$. Hence, for all $m\in M$, $\mathfrak{M}, m\models_{s_{2}} [\mathsf{U}_\mathfrak{o}]\varphi$, which in turn implies for all $g\in G$, $\mathfrak{M}, g\models_{s_1} [\mathsf{U}_\mathfrak{p}][\mathsf{U}_\mathfrak{o}]\varphi$ and for all $m\in M$ (and thus $m_0$), $\mathfrak{M}, m\models_{s_{2}} [\mathsf{U}_\mathfrak{o}][\mathsf{U}_\mathfrak{p}][\mathsf{U}_\mathfrak{o}]\varphi$.           
\end{proof}

For the canonical model construction, let $G$ and $M$ be the sets of all maximally consistent subsets of $Fm(\mathbf{BM})_{s_1}$ and $Fm(\mathbf{BM})_{s_2}$, respectively.  Then, for all $X\in G$ and $Y\in M$, we define the following sets of formulas.

      $\boxm X:=\{\varphi\mid\boxm\varphi\in X\}$, $\boxnm\neg X:=\{\varphi\mid\boxnm\neg\varphi\in X\}$, and $[\mathsf{U}_\mathfrak{p}]X:=\{\varphi\mid [\mathsf{U}_\mathfrak{p}]\varphi\in X\}$.

      $\boxg Y:=\{\varphi\mid\boxg\varphi\in Y\}$, $\boxng \neg Y:=\{\varphi\mid\boxng\neg\varphi\in Y\}$, and $[\mathsf{U}_\mathfrak{o}] Y:=\{\varphi\mid [\mathsf{U}_\mathfrak{o}] \varphi\in Y\}$.
      
The next two lemmas give basic properties of these sets of formulas.      
\begin{lemma}
\label{lemconectiveconn}
    {\rm For $X\in G$ and $Y\in M$,  $[\mathsf{U}_\mathfrak{p}] X=\boxm X\cap \boxnm\neg X$ and $[\mathsf{U}_\mathfrak{o}]Y=\boxg Y\cap \boxng\neg Y$.}
    \end{lemma}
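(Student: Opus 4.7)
The proof is essentially an unfolding of definitions combined with the standard properties of maximally consistent sets, so it requires no deep ideas. I would handle the two equalities in parallel, proving the first and noting that the second follows by symmetric argument.

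Recall from the definitions preceding Figure~\ref{fig3} that $[\mathsf{U}_\mathfrak{p}]\psi$ is an abbreviation for $\boxm\psi \wedge \boxnm\neg\psi$, and $[\mathsf{U}_\mathfrak{o}]\varphi$ is an abbreviation for $\boxg\varphi \wedge \boxng\neg\varphi$. The plan is to pick an arbitrary formula $\varphi$ and chase the membership conditions. Specifically, for the first equality, I would write the chain
\begin{align*}
\varphi \in [\mathsf{U}_\mathfrak{p}]X
&\iff [\mathsf{U}_\mathfrak{p}]\varphi \in X \\
&\iff (\boxm\varphi \wedge \boxnm\neg\varphi) \in X \\
&\iff \boxm\varphi \in X \text{ and } \boxnm\neg\varphi \in X \\
&\iff \varphi \in \boxm X \text{ and } \varphi \in \boxnm\neg X,
\end{align*}
where the first and last steps are by the definitions of $[\mathsf{U}_\mathfrak{p}]X$, $\boxm X$, and $\boxnm \neg X$, the second step is by the definitional abbreviation of $[\mathsf{U}_\mathfrak{p}]$, and the third step uses the fact that $X$, being maximally $\mathbf{BM}$-consistent, contains a conjunction $\alpha \wedge \beta$ iff it contains each conjunct (this is part of the standard Lindenbaum machinery inherited from Section~\ref{mspml}, and indeed the \textbf{KB}/\textbf{KF} fragments of \textbf{BM} already have the required propositional reasoning via axiom (PL) and modus ponens).

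The second equality $[\mathsf{U}_\mathfrak{o}]Y = \boxg Y \cap \boxng\neg Y$ is established by the identical chain of equivalences with $Y \in M$ in place of $X \in G$, substituting $\boxg$ for $\boxm$, $\boxng$ for $\boxnm$, and $[\mathsf{U}_\mathfrak{o}]$ for $[\mathsf{U}_\mathfrak{p}]$.

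There is no real obstacle here: the lemma is purely syntactic bookkeeping, verifying that the set-level operation $[\mathsf{U}_\mathfrak{p}]X$ decomposes along the abbreviational unfolding of $[\mathsf{U}_\mathfrak{p}]$ into the intersection of the two corresponding component sets. The only ingredient beyond unfolding is conjunction-closure of maximally consistent sets, which is immediate from (PL) and (MP). This lemma is a preparatory step and its value will presumably be in decomposing the canonical-model relations for the universal modalities into the relations for $\boxg,\boxm,\boxng,\boxnm$ later in the completeness argument.
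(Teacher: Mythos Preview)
Your proof is correct and follows essentially the same approach as the paper's own proof: unfold the definition of $[\mathsf{U}_\mathfrak{p}]X$, expand the abbreviation $[\mathsf{U}_\mathfrak{p}]\varphi = \boxm\varphi \wedge \boxnm\neg\varphi$, use maximal consistency to split the conjunction, and then rewrap via the definitions of $\boxm X$ and $\boxnm\neg X$. The paper likewise handles only the first equality in detail and dismisses the second as similar.
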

\begin{proof}
    $\varphi\in [\mathsf{U}_\mathfrak{p}]X$ iff $[\mathsf{U}_\mathfrak{p}]\varphi\in X$ iff $\boxm \varphi\wedge\boxnm\neg\varphi\in X$ iff $\boxm\varphi\in X$ and $\boxnm \neg\varphi\in X$, as $X$ is a maximally consistent set. Hence, $\varphi\in [\mathsf{U}_\mathfrak{p}]X$ iff $\varphi\in \boxm  X$ and $\varphi\in \boxnm \neg X$, i.e., $\varphi\in \boxm X\cap \boxnm \neg X$. Therefore  $[\mathsf{U}_\mathfrak{p}] X=\boxm X\cap \boxnm\neg X$. The proof of $[\mathsf{U}_\mathfrak{o}]Y=\boxg Y\cap \boxng\neg Y$ is similar.
\end{proof}
      
\begin{lemma}\label{relsemantics}For all $X\in G$ and $Y\in M$,
          {\rm \bla
              \item $\boxm X\subseteq Y$ iff $\boxg Y\subseteq X$ ,
              \item $\boxnm \neg X\subseteq Y$ iff $\boxng\neg Y\subseteq X$,
              \item $[\mathsf{U}_\mathfrak{p}]X\subseteq Y$ iff $\boxm X\subseteq Y$ or $\boxnm \neg X\subseteq Y$
              \item $[\mathsf{U}_\mathfrak{o}] Y\subseteq X$ iff $\boxg Y\subseteq X$ or $\boxng\neg Y\subseteq X$
              \item $[\mathsf{U}_\mathfrak{p}]X\subseteq Y$  iff $[\mathsf{U}_\mathfrak{o}] Y\subseteq X$.
          \ela}
          \end{lemma}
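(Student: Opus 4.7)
The plan is to establish (a) and (b) by direct arguments from the \textbf{KB} and \textbf{KF} axioms built into \textbf{BM}, then reduce (c) and (d) to these via Lemma~\ref{lemconectiveconn} together with the monotonicity of the box modalities and the antitonicity of the window modalities, and finally derive (e) as an immediate chain of equivalences. For part (a), I would use the \textbf{KB} bidirectionality axioms $\varphi\to\boxm\diamondg\varphi$ (for sort-$s_1$ formulas) and $\varphi\to\boxg\diamondm\varphi$ (for sort-$s_2$ formulas), together with Dual$_{\mathfrak{o}}$ and Dual$_{\mathfrak{p}}$. For the forward direction, assume $\boxm X\subseteq Y$ and pick $\varphi\in\boxg Y$, so $\boxg\varphi\in Y$. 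If $\varphi\notin X$, then $\neg\varphi\in X$ by maximal consistency, whence the first axiom yields $\boxm\diamondg\neg\varphi\in X$, so $\diamondg\neg\varphi\in\boxm X\subseteq Y$; but $\diamondg\neg\varphi\leftrightarrow\neg\boxg\varphi$ by Dual$_{\mathfrak{o}}$ and propositional logic, contradicting $\boxg\varphi\in Y$. The backward direction is symmetric.

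For part (b), I would mirror the above argument with the \textbf{KF} axioms $B^1\!:\varphi\to\boxnm\boxng\varphi$ and $B^2\!:\psi\to\boxng\boxnm\psi$. Assuming $\boxnm\neg X\subseteq Y$ and $\boxng\neg\varphi\in Y$ with $\varphi\notin X$, one has $\neg\varphi\in X$, so $\boxnm\boxng\neg\varphi\in X$ by $B^1$. Setting $\psi:=\neg\boxng\neg\varphi$, maximal consistency of $X$ gives $\boxnm\neg\psi\in X$ (since $\neg\psi$ is provably equivalent to $\boxng\neg\varphi$), hence $\psi\in\boxnm\neg X\subseteq Y$, i.e.\ $\neg\boxng\neg\varphi\in Y$, contradicting $\boxng\neg\varphi\in Y$. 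The reverse implication is symmetric via $B^2$.

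For parts (c) and (d), the key tools are Lemma~\ref{lemconectiveconn}, the monotonicity of $\boxm$ (derivable from $K^1_{\boxm}$ and UG$^2_{\boxm}$) and the antitonicity of $\boxnm$ (the sort-2 counterpart of Proposition~\ref{needproflattic}(b), which transfers to \textbf{BM} since the latter extends \textbf{KF}). The $\Leftarrow$ direction of (c) is immediate from $[\mathsf{U}_\mathfrak{p}]X=\boxm X\cap\boxnm\neg X$. For $\Rightarrow$, assume $\boxm X\not\subseteq Y$ and $\boxnm\neg X\not\subseteq Y$, witnessed respectively by sort-$s_2$ formulas $\varphi$ and $\psi$ with $\boxm\varphi\in X$, $\neg\varphi\in Y$, $\boxnm\neg\psi\in X$, $\neg\psi\in Y$. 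Monotonicity of $\boxm$ yields $\boxm(\varphi\vee\psi)\in X$, while antitonicity of $\boxnm$ applied to $\neg(\varphi\vee\psi)\to\neg\psi$ yields $\boxnm\neg(\varphi\vee\psi)\in X$; hence $[\mathsf{U}_\mathfrak{p}](\varphi\vee\psi)\in X$, so $\varphi\vee\psi\in[\mathsf{U}_\mathfrak{p}]X\subseteq Y$, contradicting $\neg\varphi,\neg\psi\in Y$. Part (d) is completely symmetric with the two sorts swapped.

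Part (e) then reduces to a chain of biconditionals: $[\mathsf{U}_\mathfrak{p}]X\subseteq Y$ iff $\boxm X\subseteq Y$ or $\boxnm\neg X\subseteq Y$ by (c), iff $\boxg Y\subseteq X$ or $\boxng\neg Y\subseteq X$ by (a) and (b), iff $[\mathsf{U}_\mathfrak{o}]Y\subseteq X$ by (d). I expect the main obstacle to be essentially bookkeeping, concentrated in (b), where one must rewrite $\boxnm\boxng\neg\varphi$ in the form $\boxnm\neg\psi$ (taking $\psi=\neg\boxng\neg\varphi$) so as to match the definition of $\boxnm\neg X$; notably, no axioms of \textbf{BM} beyond those inherited from \textbf{KB} and \textbf{KF} are needed, and in particular the $\mathsf{U}_\mathfrak{o}$, $\mathsf{U}_\mathfrak{p}$ axioms play no role at this stage.
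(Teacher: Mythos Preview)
Your proposal is correct and follows essentially the same approach as the paper's own proof: (a) via the \textbf{KB} bidirectionality axiom and duality, (b) via the \textbf{KF} axioms $B^1$/$B^2$, (c) and (d) via Lemma~\ref{lemconectiveconn} together with monotonicity of $\boxm$ and antitonicity of $\boxnm$ applied to $\varphi\vee\psi$, and (e) as the chain (c)--(a)/(b)--(d). The only cosmetic differences are that the paper proves the direction $\boxng\neg Y\subseteq X\Rightarrow\boxnm\neg X\subseteq Y$ of (b) first (you do the reverse), and that the paper spells out the derivation of $\boxnm\neg(\varphi\vee\psi)$ directly from $K^2_{\boxnm}$ and $UG^2_{\boxnm}$ rather than invoking antitonicity as a packaged rule; your observation that only the \textbf{KB}/\textbf{KF} axioms are used (and not the specific \textbf{BM} axioms $N_\mathfrak{o},N_\mathfrak{p},\mathsf{U}_\mathfrak{o},\mathsf{U}_\mathfrak{p}$) is accurate.
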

      \begin{proof}\badstart
      \bla
          \item Assume that  $\boxm X\subseteq Y$ but $\boxg Y\not\subseteq X$. Then there exists $s_{1}$-sort formula $\psi$ such that $\psi\in \boxg Y$ and $\psi\notin X$. $\psi\in \boxg Y$  implies that $\boxg\psi\in Y$ by definition of the set $\boxg Y$ and  $\psi\notin X$ implies that $\neg\psi\in X$, as $X$ is maximally consistent.  In addition, the  maximal consistency of $X$ implies $\neg\psi\rightarrow \boxm\diamondg\neg\psi\in X$. Using modus ponens, we have $\boxm\diamondg\neg\psi\in X$ which implies that $\diamondg\neg\psi\in\boxm X\subseteq Y$ by definition of the set $\boxm X$ . However, $\diamondg \neg\psi\in Y$ implies that $\neg \boxg\psi\in Y$ which is in contradiction  with $\boxg\psi\in Y$. Analogously, we can show that $\boxg Y\subseteq X$ implies $\boxm X\subseteq Y$.
          \item Assume that $\boxng\neg Y\subseteq X$ but  $\boxnm \neg X\not\subseteq Y$. Then there exists $s_{2}$-sort formula $\psi$ such that $\psi\in \boxnm\neg X$ and $\psi\notin Y$. $\psi\in\boxnm\neg X$  implies that $\boxnm\neg \psi\in X$ by definition of the set $\boxnm\neg X$ and  $\psi\notin Y$ implies that $\neg \psi\in Y$, as $Y$ is maximally consistent.  In addition, as $\neg\psi\rightarrow \boxng\boxnm\neg \psi\in Y$, we have $\boxng\boxnm\neg \psi\in Y$, using modus ponens . Again using the definition of the set   $\boxng\neg Y$, $\boxng\boxnm\neg\psi\in Y$ implies that $\neg\boxnm\neg\psi\in \boxng\neg Y\subseteq  X$ which is a contradiction as  $\psi\in\boxnm\neg X$  implies that  $\boxnm\neg\psi\in X$. Hence, $\boxng\neg Y\subseteq X$ implies  $\boxnm \neg X\subseteq Y$. The proof of the converse is similar.
          \item  If $\boxm X\subseteq Y$ or $\boxnm\neg X\subseteq Y$, then $[\mathsf{U}_\mathfrak{p}] X=\boxm X\cap\boxnm\neg X\subseteq Y$ by Lemma~\ref{lemconectiveconn}. On the other hand, assume that $[\mathsf{U}_\mathfrak{p}] X\subseteq Y$ but $\boxm X\not\subseteq Y$ and $\boxnm\neg X\not\subseteq Y$. Then, there exists $\varphi\in \boxm X $ and $\varphi\notin Y$ and there exists $\psi\in \boxnm\neg X$ and $\psi\notin Y$. Hence, $\varphi\vee\psi\notin Y$ as $Y$ is maximally consistent. In addition, $\varphi\in \boxm X $ and $\psi\in \boxnm\neg X$ implies that $\boxm\varphi\in X$ and $\boxnm\neg\psi\in X$. Then,  by PL, MP, ${\rm UG}_{\boxm}$, and $K_{\boxm}$, we can infer that $\boxm (\varphi\vee\psi)\in X$. Analogously, by PL, MP, ${\rm UG}_{\boxnm}^2$, and $K_{\boxnm}^{2}$,  we have $\boxnm\neg (\varphi\vee\psi)\in X$. Therefore, $(\varphi\vee\psi)\in \boxm X\cap \boxnm\neg X=[\mathsf{U}_\mathfrak{p}] X\subseteq Y$, which leads to contradiction.
          \item The proof is similar to that for (c).
          \item It follows immediately from (a)-(d).
      \ela
\end{proof}
\begin{definition}\label{bmcanmodel}
         {\rm  The canonical model $\mathfrak{M}_{c}:=(G, M,I,J,v)$ is a 5-tuple where, 
         \bla
             \item $G$ and $M$ are the sets of all maximally consistent subsets of $Fm(\mathbf{BM})_{s_1}$ and $Fm(\mathbf{BM})_{s_2}$, respectively,
             \item $I\subseteq G\times M$ is defined by $IXY$ iff $\boxm X\subseteq Y$ (or equivalently $\boxg Y\subseteq X$),
            \item $J\subseteq G\times M$ is defined by $JXY$ iff $\boxnm \neg X\subseteq Y$ (or  $\boxng\neg Y\subseteq X$), and   
            \item $v$ is the two-sorted valuation defined by $v_{s_1}(p_{s_1})=\{X\in G\mid p_{s_1}\in X\}$ and $v_{s_2}(p_{s_2})=\{Y\in M\mid p_{s_2}\in Y\}$. 
         \ela}
     We also use $R_c$ to denote $I\cup J$. 
      \end{definition}
\begin{lemma}[Truth Lemma]\label{truthlma}
{\rm Let $\mathfrak{M}_{c}:=(G, M,I,J,v)$ be the canonical model. Then, for any $X\in G$, $Y\in M$, $\varphi\in Fm(\mathbf{BM})_{s_1}$, and  $\psi\in Fm(\mathbf{BM})_{s_2}$, we have $\varphi\in X$  iff $\mathfrak{M}_{c},X\models\varphi$ and $\psi\in Y$  iff $\mathfrak{M}_{c},Y\models\psi$}
\end{lemma}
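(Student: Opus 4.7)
The proof proceeds by simultaneous induction on the complexity of $\varphi \in Fm(\mathbf{BM})_{s_1}$ and $\psi \in Fm(\mathbf{BM})_{s_2}$. The atomic case is immediate from Definition \ref{bmcanmodel}, and the Boolean cases $\neg, \wedge$ follow from the maximal consistency of $X$ and $Y$ in the usual way. Four modal cases remain, and they split into two kinds to be handled slightly differently: the necessity modalities $\boxg, \boxm$, and the window modalities $\boxng, \boxnm$. By the symmetry between the two sorts, it is enough to sketch one representative from each kind, say $\boxm \psi$ and $\boxnm \psi$ at a world $X \in G$.

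The $\boxm \psi$ case is standard modal logic. The forward direction is read off from the definition $IXY$ iff $\boxm X \subseteq Y$: if $\boxm \psi \in X$ then $\psi \in Y$ for every $I$-successor $Y$, and IH gives $\mathfrak{M}_c, Y \models \psi$. For the converse, assume $\boxm \psi \notin X$; a Lindenbaum-style argument, using $(\mathrm{UG}^2_{\boxm})$ and $(K^1_{\boxm})$ of $\mathbf{KB}$ in the contrapositive, shows $\boxm X \cup \{\neg \psi\}$ is consistent, so can be extended to a maximally consistent set $Y$ with $IXY$ but $\mathfrak{M}_c, Y \not\models \psi$ by IH. The case of $\boxg$ is entirely symmetric.

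For the window case $\boxnm \psi$, the forward direction goes as follows: from $\boxnm \psi \in X$, Proposition \ref{needproflattic}(b) applied to $\neg \neg \psi \to \psi$ yields $\boxnm \neg \neg \psi \in X$, so $\neg \psi \in \boxnm \neg X$; hence whenever $\psi \in Y$ we have $\neg \psi \notin Y$, giving $\boxnm \neg X \not\subseteq Y$ and thus $\neg JXY$. For the converse, assume $\boxnm \psi \notin X$; the aim is to extend $\boxnm \neg X \cup \{\psi\}$ to a maximally consistent $Y$, which by IH yields $JXY$ together with $\mathfrak{M}_c, Y \models \psi$, refuting $\boxnm \psi$ at $X$. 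If the extension fails, then $\vdash \psi \to \neg(\eta_1 \wedge \cdots \wedge \eta_n)$ for some $\eta_i$ with $\boxnm \neg \eta_i \in X$, and Proposition \ref{needproflattic}(b) would derive $\boxnm \psi \in X$ from $\boxnm \neg(\eta_1 \wedge \cdots \wedge \eta_n) \in X$, a contradiction.

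The heart of the window case, and the main obstacle, is therefore the auxiliary derivation $\{\boxnm \neg \eta_i\}_{i \le n} \vdash \boxnm \neg(\eta_1 \wedge \cdots \wedge \eta_n)$, which plays the role normality plays for $\boxm$. I plan to show this by induction on $n$, the binary step proceeding as follows: instantiate $(K^2_{\boxnm})$ with $\psi_1 := \alpha$ and $\psi_2 := \alpha \wedge \beta$ to obtain $\boxnm(\alpha \wedge \neg(\alpha \wedge \beta)) \to (\boxnm \neg \alpha \to \boxnm \neg(\alpha \wedge \beta))$; since $\alpha \wedge \neg(\alpha \wedge \beta)$ is classically equivalent to $\alpha \wedge \neg \beta$, Proposition \ref{needproflattic}(b) lets us replace the antecedent by $\boxnm(\alpha \wedge \neg \beta)$, and the same proposition applied to $\alpha \wedge \neg \beta \to \neg \beta$ shows this is in turn a consequence of $\boxnm \neg \beta$. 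The $\boxng$ case is the mirror argument using $(K^1_{\boxng})$ and Proposition \ref{needproflattic}(a).
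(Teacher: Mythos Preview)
Your proof is correct and follows the same strategy as the paper's own proof: induction on formula complexity, with the Lindenbaum argument for the modal cases showing that $\boxnm\neg X\cup\{\psi\}$ is consistent whenever $\boxnm\psi\notin X$. The paper handles only the $\boxnm$ case and waves at the auxiliary derivation with ``by (PL), (MP), $(UG^2_{\boxnm})$, and $(K^2_{\boxnm})$'', whereas you spell out the combinatorics of that step via Proposition~\ref{needproflattic}(b); just note that the proposition is stated there for $\mathbf{KF}$, but carries over verbatim to $\mathbf{BM}$ since all the relevant axioms and rules are inherited.
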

\begin{proof}
    By induction on the complexity of formulas, the basis case follows from the definition of $v$ and the proof for propositional connectives is straightforward. The only interesting cases are modal formulas, but their proof is quite standard. Hence, as an example, we only show the case of $\varphi=\boxnm\psi$. 
    
    On one hand, $\boxnm\psi\in X$ iff $\boxnm\neg\neg\psi\in X$ iff $\neg\psi\in \boxnm\neg X$. Hence, by the definition of $J$, $\varphi\in X$ implies that for all $Y$ such that $JXY$, $\neg\psi\in Y$ which implies  $\mathfrak{M}_{c},Y\not\models\psi$ by inductive hypothesis.  This results in $\mathfrak{M}_{c},X\models\varphi$ according to the semantics of $\boxnm\psi$.  
    
    On the other hand, assume that $\varphi=\boxnm\psi\not\in X$. Then, we show that $\boxnm\neg X\cup\{\psi\}$ is consistent. If not, then there exists $\{\psi_1\cdots,\psi_k\}\in\boxnm\neg X$ such that $\vdash\neg(\bigwedge_{i=1}^k\psi_i\wedge\psi)$. As $\psi_i\in\boxnm\neg X$ means that $\boxnm\neg\psi_i\in X$ for all $1\leq i\leq k$, by (PL), (MP), $(UG_\boxnm^2)$, and $(K_\boxnm^2)$, we can infer that  $\boxnm\psi\in X$, which is in contradiction with the assumption. Hence, there exists a maximally consistent extension $Y\supseteq\boxnm\neg X\cup\{\psi\}$ that by definition satisfies $JXY$. In addition, $\mathfrak{M}_{c},Y\models\psi$ by inductive hypothesis. Therefore, $\mathfrak{M}_{c},X\not\models\varphi$ by the semantics of $\boxnm$.
\end{proof}
Note that $\mathfrak{M}_{c}$ is not necessarily a generalized model because it is possible that $R_c=I\cup J\not=G\times M$. However, we can construct a generalized model from $\mathfrak{M}_{c}$ using its generated sub-model from any element of $G\cup M$. Without loss of generality, let us consider an $X\in G$ and define  
\[M_{X}:=\{Y\in M\mid\exists X_{i},Y_{i}(0\leq i\leq n) (X_{0}=X, Y_{n}=Y,  R_cX_{i}Y_{i}(\forall 0\leq i\leq n), R_cX_iY_{i-1}(\forall 1\leq i\leq n))\},\]
\[G_{X}:=\{Z\in G\mid\exists Y\in M_X, R_cZY\}.\]
Then, we define $\mathfrak{M}_c^{X}:=(G_{X}, M_{X},  I_X, J_X, v_X)$ where $I_{X}= I\cap (G_{X}\times M_{X})$, $J_{X}=J\cap (G_{X}\times M_{X})$, and $v_X$ is the two-sorted valuation defined by $(v_X)_{s_1}(p)=v_{s_1}(p)\cap G_{X}$, $(v_X)_{s_2}(p)=v_{s_1}(p)\cap M_{X}$, for all propositional symbols $p$,  respectively. The next two lemmas show that $\mathfrak{M}_c^{X}$ is indeed a generalized model.
\begin{lemma}\label{transtive}\badstart
\bla
    \item For all $X\in G$ and $Y\in M$, $R_cXY$ iff $[\mathsf{U}_\mathfrak{p}]X\subseteq Y$  iff $[\mathsf{U}_\mathfrak{o}] Y\subseteq X$.
    \item For all $X_1, X_2\in G$ and $Y_1, Y_2\in M$, if $R_cX_1Y_1$, $R_cX_1Y_2$, and $R_cX_2Y_1$, then $R_cX_2Y_2$.
\ela
\end{lemma}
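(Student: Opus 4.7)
My plan is to treat the two parts separately, since (a) is essentially a bookkeeping consequence of the previous lemma while (b) is where the axioms $(\mathsf{U}_{\mathfrak{o}})$ and $(\mathsf{U}_{\mathfrak{p}})$ do real work.

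For part (a), I would just unfold definitions. By Definition~\ref{bmcanmodel}, $R_cXY$ means $IXY$ or $JXY$, which by the same definition is $\boxm X\subseteq Y$ or $\boxnm\neg X\subseteq Y$. Lemma~\ref{relsemantics}(c) rewrites this disjunction as $[\mathsf{U}_{\mathfrak{p}}]X\subseteq Y$, and Lemma~\ref{relsemantics}(e) then gives $[\mathsf{U}_{\mathfrak{o}}]Y\subseteq X$. So (a) is just a chain of equivalences from the previous lemma plus the definition of $R_c$.

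For part (b), the idea is to translate the three hypotheses and the desired conclusion into the $[\mathsf{U}_{\mathfrak{p}}]/[\mathsf{U}_{\mathfrak{o}}]$ formulation via part (a) and then chase a formula around the diagram using the axiom $(\mathsf{U}_{\mathfrak{p}})$. Concretely, to show $R_cX_2Y_2$ it suffices to prove $[\mathsf{U}_{\mathfrak{p}}]X_2\subseteq Y_2$. Pick any $\psi\in Fm(\mathbf{BM})_{s_2}$ with $[\mathsf{U}_{\mathfrak{p}}]\psi\in X_2$. Axiom $(\mathsf{U}_{\mathfrak{p}})$ and maximal consistency of $X_2$ give $[\mathsf{U}_{\mathfrak{p}}][\mathsf{U}_{\mathfrak{o}}][\mathsf{U}_{\mathfrak{p}}]\psi\in X_2$, and the definition of $[\mathsf{U}_{\mathfrak{p}}]X_2$ gives $[\mathsf{U}_{\mathfrak{o}}][\mathsf{U}_{\mathfrak{p}}]\psi\in [\mathsf{U}_{\mathfrak{p}}]X_2$. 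Now I would push this formula through the three given relations: from $R_cX_2Y_1$, i.e.\ $[\mathsf{U}_{\mathfrak{p}}]X_2\subseteq Y_1$, I get $[\mathsf{U}_{\mathfrak{o}}][\mathsf{U}_{\mathfrak{p}}]\psi\in Y_1$; from $R_cX_1Y_1$, i.e.\ $[\mathsf{U}_{\mathfrak{o}}]Y_1\subseteq X_1$, I get $[\mathsf{U}_{\mathfrak{p}}]\psi\in X_1$; and finally from $R_cX_1Y_2$, i.e.\ $[\mathsf{U}_{\mathfrak{p}}]X_1\subseteq Y_2$, I get $\psi\in Y_2$, as required.

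The only thing I need to double check is that the sort types line up along the chain (since $[\mathsf{U}_{\mathfrak{o}}]$ and $[\mathsf{U}_{\mathfrak{p}}]$ flip sorts, the alternation of the two modalities in the triple $[\mathsf{U}_{\mathfrak{p}}][\mathsf{U}_{\mathfrak{o}}][\mathsf{U}_{\mathfrak{p}}]\psi$ must match the way the three inclusions $X_2\to Y_1\to X_1\to Y_2$ alternate), and that the axiom $(\mathsf{U}_{\mathfrak{p}})$ is stated at the right sort, namely for $s_2$-formulas, which it is. The main obstacle, conceptually, is noticing that the two "stacked universal" axioms are exactly what is needed to realize the Euclidean-type closure property of $R_c$ in the two-sorted setting; once one sees that the triple modality $[\mathsf{U}_{\mathfrak{p}}][\mathsf{U}_{\mathfrak{o}}][\mathsf{U}_{\mathfrak{p}}]$ traces precisely the three given $R_c$-arcs from $X_2$ to $Y_2$, the proof reduces to routine formula chasing as above.
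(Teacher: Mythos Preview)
Your proposal is correct and matches the paper's proof essentially step for step: part (a) is handled in the paper by a one-line appeal to Definition~\ref{bmcanmodel} and Lemma~\ref{relsemantics}, and for part (b) the paper performs exactly the same formula chase you describe, starting from $[\mathsf{U}_\mathfrak{p}]\varphi\in X_2$, applying axiom $(\mathsf{U}_\mathfrak{p})$, and then pushing the resulting formula through $Y_1$, $X_1$, $Y_2$ via the three $R_c$-relations. Your additional remark about the sort types lining up is a helpful sanity check that the paper leaves implicit.
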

\begin{proof}\badstart
 \bla
    \item It follows immediately from Definition~\ref{bmcanmodel} and Lemma~\ref{relsemantics}.
    \item Let us consider any $\varphi\in[\mathsf{U}_\mathfrak{p}]X_2$. Then, $[\mathsf{U}_\mathfrak{p}]\varphi\in X_2$ which implies that $[\mathsf{U}_\mathfrak{p}][\mathsf{U}_\mathfrak{o}][\mathsf{U}_\mathfrak{p}]\varphi\in X_2$. Now, $R_cX_2Y_1$ implies that $[\mathsf{U}_\mathfrak{p}]X_2\subseteq Y_1$. Hence $[\mathsf{U}_\mathfrak{o}][\mathsf{U}_\mathfrak{p}]\varphi\in Y_1$. Also, $R_cX_1Y_1$ implies that $[\mathsf{U}_\mathfrak{o}] Y_1\subseteq X_1$. Thus, $[\mathsf{U}_\mathfrak{p}]\varphi\in X_1$. Finally, $R_cX_1Y_2$ implies $[\mathsf{U}_\mathfrak{p}]X_1\subseteq Y_2$ which leads to $\varphi\in Y_2$. Therefore, we have $[\mathsf{U}_\mathfrak{p}]X_2\subseteq Y_2$, which is equivalent to $R_cX_2Y_2$.
\ela   
\end{proof}
\begin{lemma}
          {\rm $\mathfrak{M}_c^{X}$ is a generalized context-based model.}
          \label{canonicalmodel}
      \end{lemma}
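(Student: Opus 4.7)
The only non-trivial requirement for $\mathfrak{M}_c^X$ to be a generalized context-based model is that $I_X \cup J_X = G_X \times M_X$; that $G_X, M_X$ are sets, that $I_X, J_X$ are relations between them, and that $v_X$ is a valuation all follow directly from the construction. Since $I_X \cup J_X = (I \cup J) \cap (G_X \times M_X) = R_c \cap (G_X \times M_X)$, the task reduces to showing $R_c Z Y$ for every $Z \in G_X$ and every $Y \in M_X$.

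My plan is to first extract from Lemma~\ref{transtive}(b) a ``parallelogram'' closure principle: writing $N(Z) := \{Y \in M \mid R_c Z Y\}$, whenever $N(Z_1) \cap N(Z_2) \neq \emptyset$ one has $N(Z_1) = N(Z_2)$. This follows immediately by applying Lemma~\ref{transtive}(b) with $X_1 := Z_1$, $X_2 := Z_2$, $Y_1$ any common point of the two neighbourhoods, and $Y_2$ an arbitrary element of $N(Z_1)$, together with the symmetric argument for the reverse inclusion.

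Next, I will use this principle to prove by a short induction along the zigzag that every $Y \in M_X$ satisfies $R_c X Y$. Given a witnessing sequence $X = X_0, Y_0, X_1, Y_1, \ldots, X_n, Y_n = Y$, each intermediate $Y_{i-1}$ lies in $N(X_{i-1}) \cap N(X_i)$, so the parallelogram principle collapses the chain into $N(X_0) = N(X_1) = \cdots = N(X_n)$; since $Y_n \in N(X_n)$, this yields $Y \in N(X)$, i.e., $R_c X Y$.

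Finally, for arbitrary $Z \in G_X$ I will pick a witness $Y^* \in M_X$ with $R_c Z Y^*$; by the previous step $R_c X Y^*$ also holds, so $Y^* \in N(X) \cap N(Z)$ and hence $N(X) = N(Z)$. For any $Y \in M_X$ one then has $Y \in N(X) = N(Z)$, giving $R_c Z Y$ as required, so $\mathfrak{M}_c^X$ is a generalized model. The heart of the argument is spotting the parallelogram closure hidden in Lemma~\ref{transtive}(b); once that is identified, the zigzag induction and the conclusion are routine, so I do not anticipate a genuine obstacle.
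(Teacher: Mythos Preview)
Your proposal is correct and follows essentially the same route as the paper: reduce to showing $R_c Z Y$ for all $Z\in G_X$, $Y\in M_X$, first prove $R_c X Y$ for every $Y\in M_X$ by an induction along the defining zigzag using Lemma~\ref{transtive}(b), and then pass from $X$ to an arbitrary $Z\in G_X$ via one more application of the same lemma. Your explicit ``parallelogram'' formulation $N(Z_1)\cap N(Z_2)\neq\emptyset \Rightarrow N(Z_1)=N(Z_2)$ is a clean repackaging of Lemma~\ref{transtive}(b), but the underlying argument is the same as the paper's.
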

        \begin{proof}
         By construction, we have $I_{X}\cup J_X\subseteq G_{X}\times M_{X}$.  To prove the converse inclusion, we first show that $R_cXY$ for any $Y\in M_X$. By definition, $Y\in M_X$ implies the existence of $X_i, Y_i$  for $0\leq i\leq n$ such that $X_0=X$, $Y_n=Y$, for all $0\leq i\leq n$, $R_cX_iY_i$, and for all $1\leq i\leq n$, $R_cX_iY_{i-1}$. By induction on $i$, we can prove that $R_cXY_i$ for all $0\leq i\leq n$, so in particular, $R_cXY_n$, i.e., $R_cXY$. The induction base is simply  $R_cX_0Y_0$. Now, if $R_cXY_i$ holds, then we can derive $R_cXY_{i+1}$ from $R_cX_{i+1}Y_i$ and $R_cX_{i+1}Y_{i+1}$  by using Lemma~\ref{transtive}. Next, for any $(Z,Y)\in G_X\times M_X$, we have $R_cXY$ as $Y\in M_X$ and there exists $Y'\in M_X$ such that $R_cZY'$ as $Z\in G_X$. In addition, $R_cXY'$ also holds as $Y'\in M_X$. Hence, we have $R_cZY$ by Lemma~\ref{transtive}. Therefore, $G_X\times M_X\subseteq R_c=I\cup J$, which implies $ (G_X\times M_X)\subseteq I_X\cup J_X$. 
        \end{proof}
We can now prove the completeness of \textbf{BM} with respect to the class of generalized models.
        \begin{theorem}
        \label{completness1}
            {\rm The logic \textbf{BM} is complete with respect to the class of generalized models.}
        \end{theorem}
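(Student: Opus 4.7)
The plan is to invoke Proposition~\ref{suffconstrongcomp}, reducing completeness to showing that every \textbf{BM}-consistent set $\Phi$---without loss of generality of sort $s_1$---is satisfied in some generalized model. By a Lindenbaum construction adapted to the two-sorted signature, extend $\Phi$ to a maximally consistent set $X_0 \in G$ of the canonical model $\mathfrak{M}_c$. The Truth Lemma (Lemma~\ref{truthlma}) then yields $\mathfrak{M}_c, X_0 \models_{s_1} \Phi$. However, $\mathfrak{M}_c$ is not necessarily a generalized model since $R_c = I \cup J$ may be a proper subset of $G \times M$, so I would transfer the satisfaction to the sub-model $\mathfrak{M}_c^{X_0}$ already constructed above, which is a generalized model by Lemma~\ref{canonicalmodel}.

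The core technical step is an invariance-under-generated-submodel lemma: for every $\varphi \in Fm(\textbf{BM})_{s_i}$ and every world $w$ of sort $s_i$ in $\mathfrak{M}_c^{X_0}$, one has $\mathfrak{M}_c, w \models_{s_i} \varphi$ iff $\mathfrak{M}_c^{X_0}, w \models_{s_i} \varphi$. I would prove this by induction on $\varphi$. The propositional cases are immediate from the definition of $v_X$. For the box cases ($\boxg$ at some $m \in M_{X_0}$ and dually $\boxm$ at some $g \in G_{X_0}$), the universal quantifier ranges over $I$-successors; since $I \subseteq R_c$, every such successor lies in $\mathfrak{M}_c^{X_0}$ by the definition of $G_{X_0}, M_{X_0}$, so restricting the quantifier to the sub-model is harmless. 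For the window cases ($\boxng$ at $m \in M_{X_0}$ and dually $\boxnm$ at $g \in G_{X_0}$), the semantic clause has the form ``for all $w'$, if $\varphi$ holds at $w'$ then $\neg Jww'$''; any $w'$ outside the sub-model satisfies $\neg Jww'$ trivially, for otherwise $R_c w w'$ would force $w'$ into the sub-model by the back-and-forth reachability construction, so extending or restricting the universal quantifier leaves the truth value unchanged.

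Once the invariance lemma is in hand, $\mathfrak{M}_c^{X_0}, X_0 \models_{s_1} \Phi$ follows from $\mathfrak{M}_c, X_0 \models_{s_1} \Phi$, completing the argument (in fact yielding strong completeness). The main obstacle is not conceptual but bookkeeping: one must verify that the back-and-forth reachability definition of $G_{X_0}, M_{X_0}$ absorbs both $I$-successors and $J$-successors separately, not merely $R_c$-successors as a union, so that the induction goes through uniformly for both box and window modalities. This absorption follows cleanly from $I, J \subseteq R_c$ together with the symmetric back-and-forth closure in the definition of the sub-model, so no additional modal axioms beyond those already used in Lemmas~\ref{lemconectiveconn}--\ref{transtive} are needed at this stage.
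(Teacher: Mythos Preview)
Your proposal is correct and follows essentially the same approach as the paper: extend $\Phi$ to a maximally consistent $X_0$, pass to the generated submodel $\mathfrak{M}_c^{X_0}$ (a generalized model by Lemma~\ref{canonicalmodel}), and invoke the Truth Lemma together with Proposition~\ref{suffconstrongcomp}. In fact you are more explicit than the paper, which applies the Truth Lemma directly to $\mathfrak{M}_c^{X_0}$ without spelling out the invariance-under-generated-submodel step; your sketch of that induction (box modalities via $I\subseteq R_c$ so successors stay in the submodel, window modalities via the contrapositive since $J\subseteq R_c$) is exactly the missing bookkeeping, and it goes through because the back-and-forth reachability in the definitions of $M_{X_0}$ and $G_{X_0}$ is $R_c$-closed.
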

        \begin{proof}
           Let $\Phi$ be a \textbf{BM}-consistent subset of $Fm(\mathbf{BM})_{s_1}$. Then, there exists a maximally consistent extension $X\supseteq \Phi$. By Lemma \ref{canonicalmodel}, we can construct the generalized model  $\mathfrak{M}_c^{X}$, and by Truth Lemma,  $\mathfrak{M}_c^{X}, X\models\varphi$ for all $\varphi\in\Phi$. Analogously,  when $\Phi$ is a \textbf{BM}-consistent subset of $Fm(\mathbf{BM})_{s_2}$, we can prove its satisfiability by constructing the generalized model $\mathfrak{M}_c^{Y}$ where $Y\in M$ is a maximally consistent extension of $\Phi$. Therefore, completeness follows from Proposition~\ref{suffconstrongcomp}.
        \end{proof}
We can now transform a generalized model to a genuine context-based model. However, before showing that, we need to extend the classical definition of bounded morphisms~\cite{blackburn2002moda} to generalized models. 
\begin{definition}Let $\mathfrak{M}:=(G, M, I, J, v)$  and $\mathfrak{M}':=(G', M', I', J', v')$  be two generalized models. Then, a two-sorted mapping $f=(f_{s_1}, f_{s_2})$ such that $f_{s_1}:G\rightarrow G'$ and $f_{s_2}:M\rightarrow M'$ is a  {\em bounded morphism} from $\mathfrak{M}$ to $\mathfrak{M}'$ if it satisfies the following three conditions:
\begin{enumerate}
    \item $g$ (resp. $m$) and $f_{s_1}(g)$ (resp. $f_{s_2}(m)$) satisfy the same set of propositional variables of sort $s_1$ (resp. $s_2$),
    \item for $R=I,J$, if $(g,m)\in R$ then $(f_{s_1}(g), f_{s_2}(m))\in R'$,
    \item for $R=I,J$, if $(f_{s_1}(g), m')\in R'$ then there exists $m\in M$ such that $f_{s_2}(m)=m'$ and $(g,m)\in R$.
\end{enumerate}
If, in addition, $f$ is surjective, then we say that $\mathfrak{M}'$ is a {\em bounded morphic image} of $\mathfrak{M}$.
\end{definition}
As there is no essential difference between the definition and that in \cite{blackburn2002moda}, we can follow the same argument there to show that if $f=(f_{s_1}, f_{s_2})$ is a bounded morphism from $\mathfrak{M}$ to $\mathfrak{M}'$ then for each formula $\varphi\in Fm({\bf BM})_{s_1}$, $\psi\in Fm({\bf BM})_{s_2}$, $g\in G$ and $m\in M$, $\mathfrak{M},g\models\varphi$ iff $\mathfrak{M}',f_{s_1}(g)\models\varphi$  and $\mathfrak{M},m\models\psi$ iff $\mathfrak{M}',f_{s_2}(m)\models\psi$\footnote{While the proof in  \cite{blackburn2002moda} does not deal with cases of window modalities, we can consider them as box modalities with respect to the relation $J$ by a contrapositive reading of their semantics.}. In other words, the satisfaction of {\bf BM} formulas is invariant under bounded morphism. As a consequence, a generalized model is modally equivalent to its  bounded morphic images.
\begin{theorem}
      \label{completeness}
           {\rm Each  generalized model $\mathfrak{M}:=(G, M, I, J, v)$ is modally equivalent to some  generalized model $\underline{\mathfrak{M}}:=(\underline{G}, \underline{M}, \underline{I}, \underline{J},\underline{v})$ such that  $\underline{I}\cap\underline{J}=\emptyset$.}
       \end{theorem}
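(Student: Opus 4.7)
The plan is to apply a copying technique in the spirit of Gargov's Boolean modal logic~\cite{Gargov1987}: I would duplicate the property side of $\mathfrak{M}$ and split the overlap $I\cap J$ between the two copies so that, in the new model, each pair of worlds is assigned uniquely to exactly one of the two relations, while the natural projection back to $\mathfrak{M}$ is a surjective bounded morphism. Concretely, I would take $\underline{G}:=G$ and $\underline{M}:=M\times\{0,1\}$, with valuation $\underline{v}_{s_{1}}(p):=v_{s_{1}}(p)$ for $p\in P_{s_{1}}$ and $\underline{v}_{s_{2}}(p):=v_{s_{2}}(p)\times\{0,1\}$ for $p\in P_{s_{2}}$. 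The relations would be defined by: $(g,(m,0))\in\underline{I}$ iff $(g,m)\in I$; $(g,(m,0))\in\underline{J}$ iff $(g,m)\in J\setminus I$; $(g,(m,1))\in\underline{I}$ iff $(g,m)\in I\setminus J$; and $(g,(m,1))\in\underline{J}$ iff $(g,m)\in J$. Intuitively, copy $0$ of $m$ ``favours'' $\underline{I}$ (landing there whenever possible) while copy $1$ ``favours'' $\underline{J}$.

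Next I would check that $\underline{\mathfrak{M}}:=(\underline{G},\underline{M},\underline{I},\underline{J},\underline{v})$ is a generalized model with $\underline{I}\cap\underline{J}=\emptyset$. Disjointness is immediate: for copy $0$, the $\underline{I}$-clause forces $(g,m)\in I$ whereas the $\underline{J}$-clause forces $(g,m)\notin I$, and symmetrically for copy $1$. The covering property $\underline{I}\cup\underline{J}=\underline{G}\times\underline{M}$ follows by a case analysis on whether $(g,m)$ lies in $I\setminus J$, $J\setminus I$, or $I\cap J$, using the hypothesis $I\cup J=G\times M$. I would then define the projection $f=(f_{s_{1}},f_{s_{2}})$ by $f_{s_{1}}(g):=g$ and $f_{s_{2}}((m,k)):=m$, and verify that it is a surjective bounded morphism from $\underline{\mathfrak{M}}$ onto $\mathfrak{M}$. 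Surjectivity and preservation of atomic propositions are clear from the definitions; forward preservation of $\underline{I},\underline{J}$ into $I,J$ is a direct reading of the clauses; and for the zigzag condition, given $(g,m)\in I$ one chooses $(m,0)$ as the witness, while given $(g,m)\in J$ one chooses $(m,1)$.

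Since window modalities can be read as box modalities with respect to $J$ via a contrapositive reading of their semantics (as the footnote preceding the theorem notes), the standard invariance argument for bounded morphisms applies uniformly in this two-sorted setting, yielding that $\mathfrak{M}$ and $\underline{\mathfrak{M}}$ satisfy the same formulas at corresponding points. The main obstacle is really just the combinatorial choice of how to split the overlap; once the clauses above are in place, the verification reduces to routine case analysis. A minor subtlety worth flagging is that the asymmetric placement of $I\cap J$ across the two copies lets us duplicate only one of the two sorts (here $M$) rather than both, which keeps the construction lightweight and avoids having to reason about four-way combinations of copies on either side.
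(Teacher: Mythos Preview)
Your construction does not work: duplicating only the $M$-sort breaks the zigzag (back) condition in the direction from properties to objects, which is needed because {\bf BM} has modalities $\boxg,\boxng$ that are evaluated at $m$ and quantify over $g$'s. Concretely, suppose $(g,m)\in I\cap J$. In your model, the only $\underline{I}$-predecessor of the copy $(m,1)$ would have to be some $\tilde g$ with $f_{s_1}(\tilde g)=g$ and $(\tilde g,(m,1))\in\underline{I}$; but $f_{s_1}$ is the identity, so $\tilde g=g$, and by your clause $(g,(m,1))\in\underline{I}$ iff $(g,m)\in I\setminus J$, which fails. Hence $f$ is not a bounded morphism, and indeed modal equivalence fails at $(m,1)$: take $G=\{g\}$, $M=\{m\}$, $I=J=\{(g,m)\}$ and $v_{s_1}(p)=\emptyset$. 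Then $\mathfrak{M},m\not\models_{s_2}\boxg p$ (since $g$ is an $I$-predecessor of $m$ not satisfying $p$), whereas $\underline{\mathfrak{M}},(m,1)\models_{s_2}\boxg p$ vacuously because $(m,1)$ has no $\underline{I}$-predecessor at all. This failure propagates to sort $s_1$ as well: in the same example $\mathfrak{M},g\models_{s_1}\boxnm\boxg p$ vacuously, but $\underline{\mathfrak{M}},g\not\models_{s_1}\boxnm\boxg p$ since $(m,1)$ satisfies $\boxg p$ and $(g,(m,1))\in\underline{J}$.

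The paper avoids this by duplicating \emph{both} sorts, taking $\underline G=G\cup G'$ and $\underline M=M\cup M'$, and for each pair $(g,m)\in I\cap J$ placing the four combinations $\{g,g'\}\times\{m,m'\}$ cross-wise into $\underline I$ and $\underline J$. The point is that with two preimages available on \emph{each} side, the zigzag can be satisfied in both directions: given a fixed preimage of $m$ one can choose the appropriate preimage of $g$, and vice versa. Your ``minor subtlety'' is therefore the crux of the matter rather than a simplification; the one-sided duplication is genuinely insufficient.
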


       \begin{proof}
           Let us take an isomorphic but disjoint copy  $\mathfrak{M}^{\prime}:=(G^{\prime}, M^{\prime}, I^{\prime}, J^{\prime},  v^{\prime})$ of the given model  $\mathfrak{M}$. For any $g\in G$ and $m\in M$, we use $g'$ and $m'$ to denote their corresponding isomorphic images in $G'$ and $M'$, respectively.
           Then, we construct a model $\underline{\mathfrak{M}}:=(\underline{G}, \underline{M}, \underline{I}, \underline{J}, \underline{v})$ as follows. First, $\underline{G}= G\cup G^{\prime}$, $\underline{M}= M\cup M^{\prime}$ and  $\underline{v}_{s_i}(p)=v_{s_i}(p)\cup v_{s_i}^{\prime}(p)$ for propositional variables $p$ and $i=1, 2$. Second, we define the relations  $\underline{I}$ and $\underline{J}$ by the following rules:
           \begin{itemize}
               \item If $Igm$ and $Jgm$, then set $\underline{I}gm'$, $\underline{I}g'm$, $\underline{J}gm$, and $\underline{J}g'm'$.
               \item If $Igm$ and $\neg Jgm$, then set $\underline{I}gm$, $\underline{I}gm'$, $\underline{I}g'm$, and $\underline{I}g'm'$.
               \item If $\neg Igm$ and $Jgm$, then set $\underline{J}gm$, $\underline{J}gm'$, $\underline{J}g'm$, and $\underline{J}g'm'$.
           \end{itemize}
As $I\cup J=G\times M$, there is no possibility of $\neg Igm$ and $\neg Jgm$. Hence, the three rules are exhaustive. By the construction, it is easy to see that $\underline{I}\cap\underline{J}=\emptyset$ and $\underline{I}\cup\underline{J}=\underline{G}\times \underline{M}$.

Since $\mathfrak{M}'$ is an isomorphic copy of $\mathfrak{M}$, they are modally equivalent. That is for any $s_1$ formula $\varphi$ and $s_2$ formula $\psi$, $\mathfrak{M}, g\models\varphi$ iff $\mathfrak{M}', g'\models\varphi$ and $\mathfrak{M}, m\models\psi$ iff $\mathfrak{M}', m'\models\psi$. 

Next, we define a two-sorted mapping $f$ as $f_{s_1}:\underline{G}\rightarrow G$ and $f_{s_2}:\underline{M}\rightarrow M$ such that $f_{s_1}(g)=f_{s_1}(g')=g$ and $f_{s_2}(m)=f_{s_2}(m')=m$. Then it is easy to verify that $f$ is a surjective bounded morphism from $\underline{\mathfrak{M}}$ to $\mathfrak{M}$. Hence, $\underline{\mathfrak{M}}$ and $\mathfrak{M}$ are modally equivalent.
       \end{proof}
It is easy to see that a generalized model  $\mathfrak{M}:=(G, M, I, J, v)$ satisfying $I\cap J=\emptyset$ is actually a context-based model because in this case, $J=\overline{I}$ and the modalities $\{\boxg,\boxm\}$ and $\{\boxng,\boxnm\}$ are interpreted in $\mathfrak{M}$ exactly as their interpretations with $(G, M, I, v)$ in \textbf{KB} and \textbf{KF}, respectively. Hence, we reach the following conclusion.
\begin{theorem}
    {\rm The logic \textbf{BM} is sound and complete with respect to the class of all context-based models.}
\end{theorem}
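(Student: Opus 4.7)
The plan is to obtain the theorem as a straightforward synthesis of Theorem~\ref{completness1} (completeness for generalized models) and Theorem~\ref{completeness} (reduction to generalized models with disjoint relations), using the observation that a generalized model in which $I\cap J=\emptyset$ is essentially a context-based model.

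First, for soundness, I would note that every context-based model $\mathfrak{M}=(G,M,I,v)$ can be viewed as the generalized model $\tilde{\mathfrak{M}}=(G,M,I,\overline{I},v)$, since $(G,M,I)$ and $(G,M,\overline{I})$ are both contexts and $I\cup\overline{I}=G\times M$. Hence I need only check that the \textbf{BM} semantics on $\tilde{\mathfrak{M}}$ agrees clause-by-clause with the semantics previously given for $\boxg,\boxm$ in \textbf{KB} and for $\boxng,\boxnm$ in \textbf{KF}. The agreement for $\boxg,\boxm$ is immediate from the definitions, and the agreement for $\boxng,\boxnm$ follows by contraposition because $J=\overline{I}$ gives $\neg Jgm \iff Igm$, matching Definition~\ref{windosatis}. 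Soundness of \textbf{BM} over context-based models is then a consequence of Theorem~\ref{soundness}.

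For completeness, I would start with a \textbf{BM}-consistent set $\Phi$. By Theorem~\ref{completness1}, $\Phi$ is satisfied at some point of some generalized model $\mathfrak{M}$. Applying Theorem~\ref{completeness}, I pass to a modally equivalent generalized model $\underline{\mathfrak{M}}=(\underline{G},\underline{M},\underline{I},\underline{J},\underline{v})$ with $\underline{I}\cap\underline{J}=\emptyset$; the construction there also yields $\underline{I}\cup\underline{J}=\underline{G}\times\underline{M}$, so $\underline{J}=\overline{\underline{I}}$. By the same semantic identification as in the soundness step, $\underline{\mathfrak{M}}$ is exactly the generalized model associated with the context-based model $(\underline{G},\underline{M},\underline{I},\underline{v})$, so $\Phi$ is satisfied in this context-based model. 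Completeness then follows via Proposition~\ref{suffconstrongcomp}.

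There is no real obstacle here beyond bookkeeping: the nontrivial ingredients (canonical model, Truth Lemma, and the copying/bounded-morphism construction that enforces $I\cap J=\emptyset$) are already established. The only point that requires a moment of care is verifying that the window-modality clauses transfer correctly between the two formulations, which is the direct contrapositive check indicated above. Thus the present theorem can be stated as a short corollary of Theorems~\ref{completness1} and~\ref{completeness} together with the identification of context-based models as precisely the generalized models whose two relations are complementary.
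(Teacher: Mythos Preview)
Your proposal is correct and follows essentially the same approach as the paper: the paper's proof is a one-line invocation of Theorems~\ref{soundness}, \ref{completness1}, and~\ref{completeness}, preceded by the very identification you spell out (that a generalized model with $I\cap J=\emptyset$ has $J=\overline{I}$ and hence coincides with a context-based model). Your only addition is making the contrapositive check for the window modalities explicit, which the paper leaves implicit.
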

\begin{proof}
  It follows from   Theorem~\ref{soundness}, \ref{completness1} and  \ref{completeness}.
\end{proof}
\begin{re}
    {\rm Let us consider two fragments of the logic $\bf{BM}$, $\mathcal{L}_{\Box}$ and $\mathcal{L}_{\boxminus}$, generated by $\{\boxg, \boxm\}$ and $\{\boxng, \boxnm\}$, respectively.   According to Section~\ref{lattice},  $\mathcal{L}_{\Box}$ and  $\mathcal{L}_{\boxminus}$  can syntactically represent rough  and  formal concepts in contexts.   Hence,  the logic $\textbf{BM}$ provides a unified logical framework for the representation and reasoning about both kinds of concepts.}
\end{re}

\subsection{Logical representation of concepts in {\bf BM}}\label{BMLdBa}
In Section \ref{lattice}, we show that certain pairs of formulas in \textbf{KF} and \textbf{KB} can represent formal and rough concepts, respectively. As   \textbf{BM} contains all logical symbols of \textbf{KF} and \textbf{KB}, it provides a uniform framework to achieve the full expressive power of both.  In this subsection, we show that such expressive power facilitates the representation of the notions of concepts, semiconcepts and protoconcepts in FCA and RST in the single framework of \textbf{BM}. We also explicate the significance of the representation. Additionally, the logical representation of semiconcepts and protoconcepts will play an important role in our proof of the characterization of dBas in terms of Boolean algebras in next section.

To define logical representations for different notions of concepts, let $\mathbb{K}=(G, M, I)$ be a context and let $\mathfrak{C}_{\mathbb{K}}$  denote the class of all $\mathbb{K}$-based models.
\begin{definition}
    {\rm  Let $\varphi\in Fm(\textbf{BM})_{s_{1}}$ and $\psi\in Fm(\textbf{BM})_{s_{2}}$.  A pair of formulas $(\varphi, \psi)$ is called,

    \bla
    \item a {\it (logical) formal concept} of $\mathbb{K}$ if $\models^{\mathfrak{C}_{\mathbb{K}}}_{s_1} \varphi\leftrightarrow \boxnm\psi$ and   $\models^{\mathfrak{C}_{\mathbb{K}}}_{s_2}  \boxng\varphi\leftrightarrow \psi$,
    \item  a {\it (logical) property oriented concept}  of $\mathbb{K}$ if $\models^{\mathfrak{C}_{\mathbb{K}}}_{s_{1}} \varphi\leftrightarrow \oboxm\psi$ and $\models^{\mathfrak{C}_{\mathbb{K}}}_{s_{2}} \odiamondg\varphi\leftrightarrow\psi$,
     \item a {\it (logical) object oriented concept}  of  $\mathbb{K}$ if $\models^{\mathfrak{C}_{\mathbb{K}}}_{s_{1}} \varphi\leftrightarrow \odiamondm\psi$ and $ \models^{\mathfrak{C}_{\mathbb{K}}}_{s_{2}} \oboxg\varphi\leftrightarrow \psi$.
    \ela
   }
\end{definition}

The definition is simply a  reformulation of Definition \ref{deflogcept}. The main difference is that here we define them in a single language while  Definition \ref{deflogcept} is based on two different languages. When we studied rough and formal concepts using two different logics, we needed  a translation between their languages and two different models, respectively based on a context and its complement. Hence, for a formal concept in a given context, the corresponding property (or object) oriented concept according to Proposition~\ref{mapconcept} is defined with respect to the complemented context. Now, although we represent all these concepts in a single context, the corresponding property (or object) oriented concepts must be still defined with respect to the complement of the incidence relation $I$ for Proposition~\ref{mapconcept} to hold in the uniform framework. This is the reason why we define property and  object oriented concepts by using complemented modalities here. With such a  modified definition of property and object oriented concepts, we can have the following correspondence results just as in Proposition~\ref{mapconcept}.
\begin{proposition}
{\rm Let  $\varphi\in Fm(\textbf{BM})_{s_{1}}$ and $\psi\in Fm(\textbf{BM})_{s_{2}}$. Then, 
      \begin{itemize}
          \item[(a)]   $(\varphi, \psi)$  is a  property oriented concept iff $(\neg\varphi, \neg\psi)$ is a  object oriented concept.
          \item[(b)] $(\varphi, \psi)$  is a formal concept   iff $(\varphi, \neg \psi)$  is a  property oriented concept.
          \item[(c)]   $(\varphi,\psi)$  is a  formal concept iff $(\neg\varphi, \psi)$  is a   object oriented concept.
      \end{itemize}}
      \end{proposition}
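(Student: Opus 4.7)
My plan is to prove all three parts by direct unwinding of the definitions of the overlined modalities and straightforward propositional reasoning. The key observation is that the overlined operators $\oboxg$, $\oboxm$, $\odiamondg$, $\odiamondm$ are defined by prefixing (and, where applicable, postfixing) a negation to the underlying $\boxg, \boxm, \boxng, \boxnm$: specifically $\oboxg\varphi = \boxng\neg\varphi$ and $\oboxm\psi = \boxnm\neg\psi$, and, by the standard duality with their window-modal counterparts, $\odiamondg\varphi$ and $\odiamondm\psi$ can be read as $\neg\boxng\varphi$ and $\neg\boxnm\psi$, respectively. Hence replacing $\varphi$ or $\psi$ by its negation inside one of these operators either strips off or inserts a negation cleanly, modulo double-negation elimination.

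For part~(a), I would start from the defining equivalences of a property oriented concept $(\varphi,\psi)$, namely $\varphi\leftrightarrow\oboxm\psi$ and $\odiamondg\varphi\leftrightarrow\psi$, rewrite $\oboxm\psi$ as $\boxnm\neg\psi$ and $\odiamondg\varphi$ as $\neg\boxng\varphi$, negate both sides of each biconditional, and then re-fold the result to obtain $\neg\varphi\leftrightarrow\odiamondm\neg\psi$ and $\oboxg\neg\varphi\leftrightarrow\neg\psi$, which are precisely the defining equivalences for $(\neg\varphi,\neg\psi)$ being an object oriented concept. Since every step is an equivalence, both directions follow simultaneously.

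For part~(b), the defining equivalences of a formal concept are $\varphi\leftrightarrow\boxnm\psi$ and $\boxng\varphi\leftrightarrow\psi$. Substituting $\neg\psi$ for $\psi$ in the property-oriented-concept conditions and expanding, I get $\varphi\leftrightarrow\boxnm\neg\neg\psi$ and $\neg\boxng\varphi\leftrightarrow\neg\psi$, which collapse (by double negation) to exactly the formal-concept equivalences. Part~(c) is handled symmetrically: plugging $\neg\varphi$ into the object-oriented-concept conditions gives $\neg\varphi\leftrightarrow\neg\boxnm\psi$ and $\boxng\neg\neg\varphi\leftrightarrow\psi$, which reduce to the formal-concept equivalences.

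There is no real obstacle here; the whole argument is a short calculation in propositional logic once the overlined modalities are expanded. The only point that requires a modicum of care is making sure the semantic reading of $\odiamondg$ and $\odiamondm$ (which are used but not introduced in the main text beyond the macro list) is fixed as the Boolean dual of $\boxng$ and $\boxnm$ respectively, so that the rewriting $\odiamondg\varphi \equiv \neg\boxng\varphi$ (and similarly for $\odiamondm$) is justified; after that, the three equivalences fall out by symmetric bookkeeping of negations.
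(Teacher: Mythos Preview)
Your proposal is correct and matches the paper's approach: the paper states the proposition immediately after the definitions without an explicit proof, treating it as a direct consequence of unwinding the overlined modalities, which is exactly what you do. Your observation that $\odiamondg$ and $\odiamondm$ must be read as the duals $\neg\oboxg\neg$ and $\neg\oboxm\neg$ (equivalently $\neg\boxng$ and $\neg\boxnm$) is the one nontrivial point, and once that is fixed the three equivalences are indeed just double-negation bookkeeping as you describe.
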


For a logical concept $(\varphi, \psi)$, we call $\varphi$ and $\psi$ its {\it extent formula} and {\it intent formula}, respectively. We denote the sets of all logical formal concepts, logical property oriented concepts, and logical object oriented concepts by \texttt{FC}, ${\tt PC}$, and ${\tt OC}$, respectively.  As in Section \ref{lattice}, we can define equivalence relation $\equiv_{\tt FC}$, $\equiv_{\tt PC}$ and $\equiv_{\tt OC}$ on the sets ${\tt FC}$, ${\tt PC}$ and ${\tt OC}$ by identifying equivalent extent formulas of two concepts. It can be shown that the sets of equivalence classless  ${\tt FC}/\equiv_{\tt FC}$, ${\tt PC}/\equiv_{\tt PC}$ and ${\tt OC}/\equiv_{\tt OC}$ form lattices just as in Theorem \ref{relationlogicalcon}.

In addition to rough and formal concepts, the logic can also represent more general notions of concepts in FCA, such as semiconcepts and protoconcepts~\cite{wille}.
\begin{definition}
    {\rm   A pair of formulas $(\varphi, \psi)$ is called,

    \bla
    \item a {\it (logical) semiconcept} of  $\mathbb{K}$ if $\models^{\mathfrak{C}_{\mathbb{K}}}_{s_1} \varphi\leftrightarrow \boxnm\psi$ or  $\models^{\mathfrak{C}_{\mathbb{K}}}_{s_2}  \boxng\varphi\leftrightarrow \psi$,
    \item a {\it (logical) property oriented semiconcept}  of  $\mathbb{K}$ if $\models^{\mathfrak{C}_{\mathbb{K}}}_{s_{1}} \varphi\leftrightarrow \oboxm\psi$ or $\models^{\mathfrak{C}_{\mathbb{K}}}_{s_{2}}  \odiamondg\varphi\leftrightarrow\psi$,
     \item a {\it (logical) object oriented semiconcept}  of $\mathbb{K}$ if $\models^{\mathfrak{C}_{\mathbb{K}}}_{s_{1}} \varphi\leftrightarrow \odiamondm\psi$ or $ \models^{\mathfrak{C}_{\mathbb{K}}}_{s_{2}} \oboxg\varphi\leftrightarrow \psi$.
    \ela
   }
\end{definition}
We denote the sets of all logical semiconcepts, logical property oriented smiconcepts, and logical object oriented semiconcepts by ${\tt SC}$,  ${\tt PS}$, and ${\tt OS}$, respectively. Next, we define logical protoconcepts.
 \begin{definition}
    {\rm   A pair of formulas $(\varphi, \psi)$ is called,

    \bla
    \item a {\it logical protoconcept}  of   $\mathbb{K}$ if $\models^{\mathfrak{C}_{\mathbb{K}}}_{s_1} \boxng\varphi\leftrightarrow \boxng\boxnm\psi$,
    \item a  {\it logical property oriented protoconcept}  of  $\mathbb{K}$ if $\models^{\mathfrak{C}_{\mathbb{K}}}_{s_{1}} \odiamondg\varphi\leftrightarrow \odiamondg\oboxm\psi$, %or $\models^{\mathfrak{C}_{\mathbb{K}}}_{s_{2}}  \diamondg\varphi\leftrightarrow\psi$,
     \item a {\it logical object oriented protoconcept}  of $\mathbb{K}$ if %$\models^{\mathfrak{C}_{\mathbb{K}}}_{s_{1}} \varphi\leftrightarrow \diamondm\psi$ or
     $\models^{\mathfrak{C}_{\mathbb{K}}}_{s_{2}} \odiamondm\oboxg\varphi\leftrightarrow \odiamondm\psi$.
    \ela
   }
\end{definition}

While the logic {\bf BM} (and also {\bf KB} and {\bf KF}) is aimed at the general representation and reasoning of knowledge and information contained in a context or a class of contexts, as an instance, this section demonstrates that the logic can especially represent all important notions in FCA. We can highlight the significance of the representation by looking at its difference with previous work on FCA-related logics~\cite{bimbo2001four,CONRADIE2021371,HCgame,CH,HOWLADER2023115}. Unlike those logics where a formal concept is usually represented by a single formula\footnote{See Section~\ref{relatedwork} for more detailed presentation and comparison.}, we allows the separate representation of its extent and intent with two formulas. Such a representation as pairs of formulas facilitate the logical derivation of more fine-grained information from  formal concepts, semiconcepts, and protoconcepts. Hence, with the representations, we can not only study the relationship between different notions of concepts but also derive useful information from their extents or intents independently. 

On one hand, as existing logics can only represent a particular notion of concepts, there is no way to express different kinds of concepts in a single formalism. Therefore, it is impossible to study the relationship between a formal concept
and a semiconcept in those systems, e.g., to see if there is a subsumption relation between their extents. However, given a logical formal concept  
$(\varphi_1,\psi_1)$ and semiconcept $(\varphi_2,\psi_2)$ with respect to a context, as well as background information about the context (also represented in our logical formulas), we can easily test if  $\varphi_1\rightarrow\varphi_2$ or $\varphi_2\rightarrow\varphi_1$ holds in {\bf BM}. 

On the other hand, we can derive further information from the extent or intent of a concept without resorting to its relationship with other concepts. For instance, regarding our data mining context, assume that $\varphi$  and $\psi$ denote a group of customers and a kind of product items, respectively. If it turns out that $(\varphi,\psi)$ forms a formal concept, then it means that the shopping behavior of this group of customers has strong connection with this kind of items. Now, for marketing purpose, a company may want to know more about the group of customers. For example, if $\varphi'$ denotes a community of individuals and $\varphi\rightarrow\varphi'$ is derivable from the background information of the context, then it is known that the group of customers belongs to a larger community. This will be valuable information for the design of marketing strategies because the company may target the advert for products $\psi$ at customers belonging to the community (but not in the group $\varphi$ yet).

Moreover, because our logic is flexible enough to express object and property formulas separately, it has the potential capability to represent notions not explored yet in FCA. That is, when a new notion of concept is invented, it is possibly represented by our logic if the relationship between its extent and intent can be captured by modalities used in  {\bf BM}. In such cases, results related to the new notion can also be derived using our axiomatic system. 
 
\section{Representation of double Boolean algebras}
\label{representation of dBa}
In section \ref{BMLdBa},  we define the logical protoconcepts and semiconcepts as a pair of BM formulas.  In this section, we construct dBa and pure dBa using  these pairs of formulas.

A semiconcept $(\varphi, \psi)$ is called a {\it left semiconcept} if $\models^{\mathfrak{C}_{\mathbb{K}}}_{s_2}  \boxng\varphi\leftrightarrow \psi$  and it is called a {\it right semiconcept} if $\models^{\mathfrak{C}_{\mathbb{K}}}_{s_1} \varphi\leftrightarrow \boxnm\psi$ . We denote the sets of all left and right semiconcepts by ${\tt SC}^{left}$ and  ${\tt SC}^{right}$, respectively. We can also define equivalence relations over ${\tt SC}^{left}$ and ${\tt SC}^{right}$. We say two left semiconcepts $(\varphi_{1}, \psi_{1})$ and $(\varphi_{2}, \psi_{2})$ are  equivalent, denoted by  $(\varphi_{1}, \psi_{1})\equiv^{left}_{sc}(\varphi_{2}, \psi_{2})$,  if  $\models^{\mathfrak{C}_{\mathbb{K}}}_{s_1}\varphi_{1}\leftrightarrow\varphi_{2}$. Similarly, two right semiconcepts   $(\varphi_{1}, \psi_{1})$ and $(\varphi_{2}, \psi_{2})$ are equivalent if  %$\models^{\mathfrak{C}_{\mathbb{K}}}_{s_1}\varphi_{1}\leftrightarrow\varphi_{2}$  and 
$\models^{\mathfrak{C}_{\mathbb{K}}}_{s_2}\psi_{1}\leftrightarrow\psi_{2}$
and denoted by  $(\varphi_{1}, \psi_{1})\equiv^{right}_{sc}(\varphi_{2}, \psi_{2})$. For simplification, we omit the subscript from the equivalence relations and simply write them as $\equiv^{left}$ and $\equiv^{right}$. Let ${\tt SC}^{left}/\equiv^{left}$ and ${\tt SC}^{right}/\equiv^{right}$ be the corresponding sets of equivalence classes. Then, we define the  following structures :
\begin{itemize}
    \item $({\tt SC}^{left}/\equiv^{left}, \wedge, \neg, [(\bot_{s_{1}}, \boxng\bot_{s_{1}})] )$ where for any $[(\varphi, \psi)], [(\varphi^{\prime}, \psi^{\prime})]\in {\tt SC}^{left}/\equiv^{left}$, 
    \begin{itemize}
    \item $[(\varphi, \psi)]\wedge [(\varphi^{\prime}, \psi^{\prime})]:= [(\varphi\wedge\varphi^{\prime}, \boxng(\varphi\wedge\varphi^{\prime}))]$
    \item $\neg [(\varphi, \psi)]:= [(\neg \varphi, \boxng\neg\varphi)]$
\end{itemize}
\item $({\tt SC}^{right}/\equiv^{right}, \vee, \neg, [(\boxnm\bot_{s_{2}}, \bot_{s_{2}})] )$ where for any $[(\varphi, \psi)], [(\varphi^{\prime}, \psi^{\prime})]\in {\tt SC}^{right}/\equiv^{right}$, 
\begin{itemize}
   \item  $[(\varphi, \psi)]\vee [(\varphi^{\prime}, \psi^{\prime})]:= [( \boxnm(\psi\wedge\psi^{\prime}), \psi\wedge\psi^{\prime})]$
   \item  $\neg [(\varphi, \psi)]:= [(\boxnm\neg\psi, \neg\psi)]$
\end{itemize}
\end{itemize}
\begin{theorem}
    {\rm $({\tt SC}^{left}/\equiv^{left}, \wedge, \neg, [(\bot_{s_{1}}, \boxng\bot_{s_{1}})] )$ and  $({\tt SC}^{right}/\equiv^{right}, \vee, \neg, [(\boxnm\bot_{s_{2}}, \bot_{s_{2}})] )$ are both Boolean algebras.}
\end{theorem}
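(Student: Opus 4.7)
The plan is to exhibit each of the two structures as isomorphic to a Lindenbaum--Tarski style quotient of $\textbf{BM}$ formulas, and then exploit the fact that propositional tautologies are among the axioms of $\textbf{BM}$ so that the quotient is automatically Boolean.

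First I would focus on the left-semiconcept algebra. Let $\sim_{s_1}$ denote semantic equivalence of $s_1$-sort formulas over $\mathfrak{C}_{\mathbb{K}}$, and consider the map $\Phi : {\tt SC}^{left}/\equiv^{left} \to Fm(\textbf{BM})_{s_1}/\!\sim_{s_1}$ defined by $\Phi([(\varphi,\psi)]) = [\varphi]_{\sim_{s_1}}$. By the very definition of $\equiv^{left}$, this map is both well-defined and injective. It is surjective because for every $\varphi \in Fm(\textbf{BM})_{s_1}$ the pair $(\varphi, \boxng\varphi)$ trivially satisfies $\models^{\mathfrak{C}_{\mathbb{K}}}_{s_2} \boxng\varphi\leftrightarrow\boxng\varphi$, hence lies in ${\tt SC}^{left}$ and maps to $[\varphi]_{\sim_{s_1}}$. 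Next I would check that $\Phi$ transports the operations on ${\tt SC}^{left}/\equiv^{left}$ to the usual Boolean operations on the Lindenbaum quotient: directly from the definitions, $\Phi([(\varphi,\psi)] \wedge [(\varphi',\psi')]) = [\varphi \wedge \varphi']_{\sim_{s_1}}$, $\Phi(\neg[(\varphi,\psi)]) = [\neg\varphi]_{\sim_{s_1}}$, and $\Phi([(\bot_{s_1}, \boxng\bot_{s_1})]) = [\bot_{s_1}]_{\sim_{s_1}}$. Since all propositional tautologies of sort $s_1$ are in $\textbf{BM}$, the Lindenbaum quotient $(Fm(\textbf{BM})_{s_1}/\!\sim_{s_1},\wedge,\neg,[\bot_{s_1}])$ is a Boolean algebra by the standard argument, so ${\tt SC}^{left}/\equiv^{left}$ inherits the Boolean algebra structure through $\Phi$.

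The argument for $({\tt SC}^{right}/\equiv^{right}, \vee, \neg, [(\boxnm\bot_{s_2}, \bot_{s_2})])$ is entirely symmetric, this time using the second component as representative: define $\Psi([(\varphi,\psi)]) = [\psi]_{\sim_{s_2}}$, note that $(\boxnm\psi,\psi)$ provides a preimage for every $\psi$, and observe that the defined $\vee$ on right semiconcepts corresponds under $\Psi$ to conjunction on $s_2$-formulas (since the chosen representative has intent $\psi\wedge\psi'$), while $\neg$ and the designated element correspond to $\neg$ and $[\bot_{s_2}]$; the resulting Boolean structure on $Fm(\textbf{BM})_{s_2}/\!\sim_{s_2}$ is the dual one where the lattice join is realized by conjunction. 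The only point demanding attention is checking that the operations are internal to ${\tt SC}^{left}$ resp.\ ${\tt SC}^{right}$, and independent of representatives; both are automatic, because each output is built so that its second (resp.\ first) component is literally $\boxng$ (resp.\ $\boxnm$) applied to the first (resp.\ second) component. I do not foresee a genuine obstacle: once the bijections $\Phi,\Psi$ are in place, the theorem reduces to the familiar observation that propositional connectives modulo semantic equivalence form a Boolean algebra.
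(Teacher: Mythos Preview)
Your proposal is correct and follows essentially the same approach as the paper: the paper's proof simply observes that ``all operations over the structure depend only on the first component of formula pairs and the equivalence classes of formulas form a Boolean algebra already,'' which is exactly what your bijection $\Phi$ (and dually $\Psi$) makes explicit. Your treatment is more detailed---in particular you spell out the anti-isomorphism in the right-semiconcept case---but the underlying idea is identical.
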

\begin{proof}
   We only prove this for the structure $({\tt SC}^{left}/\equiv^{left}, \wedge, \neg, [(\bot_{s_{1}}, \boxng\bot_{s_{1}})] )$. The proof for the other one is similar. First, for  $[(\varphi, \psi)], [(\varphi^{\prime}, \psi^{\prime})]\in {\tt SC}^{left}/\equiv^{left}$, we define $[(\varphi, \psi)]\vee [(\varphi^{\prime}, \psi^{\prime})]=\neg (\neg[(\varphi, \psi)]\wedge\neg [(\varphi^{\prime}, \psi^{\prime})])$. Then, the result follows immediately from the fact that all operations over the structure depend only on the first component of formula pairs and the equivalence classes of formulas form an Boolean algebra already.
\end{proof}
 Let $\mathfrak{D}\subseteq Fm(\textbf{BM})_{s_{1}}\times Fm(\textbf{BM})_{s_{2}}$ be a subset of formula pairs such that there are maps  $r:\mathfrak{D}\rightarrow {\tt SC}^{left}/\equiv^{left}$, $e:{\tt SC}^{left}/\equiv^{left}\rightarrow \mathfrak{D}$, $r^{\prime}:\mathfrak{D}\rightarrow {\tt SC}^{right}/\equiv^{right}$, and $e^{\prime}:{\tt SC}^{right}/\equiv^{right}\rightarrow \mathfrak{D}$ satisfying  
$r\circ e=id_{ {\tt SC}^{left}/\equiv^{left}}$ and $r^{\prime}\circ e^{\prime}=id_{ {\tt SC}^{right}/\equiv^{right}}$. Then, we define operations over $\mathfrak{D}$ as follows: for all $x, y\in \mathfrak{D}$
 \begin{center}
     $x\sqcap y:= e(r(x)\wedge r(y))$ $x\sqcup y:= e^{\prime}(r^{\prime}(x)\vee r^{\prime}(y))$\\
     $\bar{\neg}x:=e(\neg r(x))$ and $\lrcorner x:=e^{\prime}(\neg r^{\prime}(x))$\\
    $\bot_{\mathfrak{D}}:=e([(\bot_{s_{1}}, \boxng\bot_{s_{1}})])$ and $\top_{\mathfrak{D}}:= e^{\prime}([(\boxnm\bot_{s_{2}}, \bot_{s_{2}})])$
 \end{center}
Next, we show that $\mathfrak{D}$ forms a pure dBa with respect to these operations under some mild assumptions.
 \begin{theorem}
 \label{dbalogic}
     {\rm $(\mathfrak{D}, \sqcap, \sqcup, \bar{\neg}, \lrcorner, \top_{\mathfrak{D}}, \bot_{\mathfrak{D}})$ is a pure dBa iff the following conditions hold.
     \bla
     \item $e\circ r\circ e^{\prime}\circ r^{\prime}=e^{\prime}\circ r^{\prime}\circ e\circ r$
    \item $e(r(x)\wedge r(e^{\prime}(r^{\prime}(x)\vee r^{\prime}(y))))=e(r(x))$ and $e^{\prime}(r^{\prime}(x)\vee r^{\prime}(e(r(x)\wedge r(y))))=e^{\prime}(r^{\prime}(x))$ for all $x, y\in A$.
    \item  $r(e^{\prime}([(\boxnm\bot_{s_{2}}, \bot_{s_{2}})]))=\neg [(\bot_{s_{1}}, \boxng\bot_{s_{1}})]$ and $r^{\prime}(e( [(\bot_{s_{1}}, \boxng\bot_{s_{1}})]))=\neg[(\boxnm\bot_{s_{2}}, \bot_{s_{2}})]$.
    \item For $x\in \mathfrak{D}$, either $er(x)=x$ or $e^{\prime}r^{\prime}(x)=x$
     \ela}
 \end{theorem}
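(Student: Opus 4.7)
The plan is to verify every dBa axiom (1a)--(12) by unfolding the operations through the retractions $r\circ e=id_{B}$ and $r'\circ e'=id_{B'}$, where $B:={\tt SC}^{left}/\equiv^{left}$ and $B':={\tt SC}^{right}/\equiv^{right}$. Direct computation gives $x\sqcap y=e(r(x)\wedge r(y))$, $x\sqcup y=e'(r'(x)\vee r'(y))$, $y\bar{\vee}z=e(r(y)\vee r(z))$, and $y\bar{\wedge}z=e'(r'(y)\wedge r'(z))$. With these expansions, axioms (1a,1b), (2a,2b), (3a,3b), (5a,5b), (6a,6b), (7a,7b), (8a,8b), and (10a,10b) collapse to identities such as associativity, commutativity, distributivity, absorption, double negation, and complementation inside $B$ or $B'$, and therefore hold unconditionally. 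The remaining axioms (4a,4b), (9a,9b), (11a,11b), (12), together with purity, are exactly where the four conditions (a)--(d) intervene.

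I match each condition to its axioms, which handles both directions of the equivalence simultaneously. Axiom (12) unfolds to $e'(r'(e(r(x))))=e(r(e'(r'(x))))$, which is exactly (a). Axioms (4a) and (4b) unfold to the two halves of the absorption identity (b). For (c), one first identifies $[(\bot_{s_1},\boxng\bot_{s_1})]$ as the bottom of $B$ and $[(\boxnm\bot_{s_2},\bot_{s_2})]$ as the \emph{top} of $B'$, since $\vee$ on right semiconcepts is conjunction of intent formulas and thus has the intent-$\bot$ element as absorbing. Axiom (9a), $\bar{\neg}\top_{\mathfrak{D}}=\bot_{\mathfrak{D}}$, then becomes $e(\neg r(\top_{\mathfrak{D}}))=e(\bot_{B})$; applying $r$ and using $r\circ e=id_{B}$ this is equivalent to $r(e'([(\boxnm\bot_{s_2},\bot_{s_2})]))=\neg[(\bot_{s_1},\boxng\bot_{s_1})]$, i.e., the first half of (c). Axiom (11a) reduces to the same identity after computing $\bar{\neg}\bot_{\mathfrak{D}}=e(\top_{B})$ and $\top_{\mathfrak{D}}\sqcap\top_{\mathfrak{D}}=e(r(\top_{\mathfrak{D}}))$. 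Symmetric calculations handle (9b) and (11b) via the second half of (c). Finally, since $x\sqcap x=e(r(x))$ and $x\sqcup x=e'(r'(x))$, the pure condition ``$x\sqcap x=x$ or $x\sqcup x=x$'' is literally (d).

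The main obstacle is bookkeeping rather than any deep argument. In particular, one must carefully track the orientation of $B'$: because $\vee$ on right semiconcepts corresponds to \emph{intersection} of intent formulas, the named element $[(\boxnm\bot_{s_2},\bot_{s_2})]$ is $\top_{B'}$ rather than $\bot_{B'}$, and consequently $\top_{\mathfrak{D}}=e'(\top_{B'})$ while $\bot_{\mathfrak{D}}=e(\bot_{B})$. Once this orientation is fixed, the reductions of (9a,9b,11a,11b) to the two halves of (c) go through by applying $r$ or $r'$ to both sides and invoking the retraction identities; at that step the injectivity of $r$ on the image of $e$ (and of $r'$ on the image of $e'$) is essential, for otherwise condition (c) would appear weaker than what axiom (9) actually demands. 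Everything else is mechanical.
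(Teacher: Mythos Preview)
Your proposal is correct and follows essentially the same approach as the paper: both directions are handled by unfolding the operations via the retraction identities $r\circ e=id_B$ and $r'\circ e'=id_{B'}$, reducing the ``free'' axioms (1)--(3), (5)--(8), (10) to Boolean identities in $B$ or $B'$, and matching (4), (9), (11), (12), and purity one-to-one with conditions (b), (c), (c), (a), (d) respectively. Your upfront computation of $y\bar{\vee}z=e(r(y)\vee r(z))$ streamlines several verifications that the paper spells out axiom by axiom, but the underlying argument is identical; one small wording slip is that what you call ``injectivity of $r$ on the image of $e$'' is really the injectivity of $e$ itself (equivalently, the ability to cancel $e$ by applying $r$), which is exactly how the paper phrases it.
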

\begin{proof}
We first show that the structure satisfying these conditions also satisfies axioms (1a)-(11a) and (12) for dBa. Let $x, y, z\in \mathfrak{D}$. Then,
    \begin{description}
        \item (1a): 
        \begin{eqnarray*}
           (x\sqcap x)\sqcap y&=&e(r(x\sqcap x)\wedge r(y))~ \mbox{by definition of}~\sqcap\\
           &=&e(re(r(x)\wedge r(x))\wedge r(y))~\mbox{by definition of}~\sqcap\\
           &=& e((r(x)\wedge r(x))\wedge r(y))~\mbox{as}~re~\mbox{is an identity map on}~{\tt SC}^{left}/\equiv^{left}\\
           &=&e(r(x)\wedge r(y))\\
           &=&x\sqcap y~\mbox{by definition of}~\sqcap
        \end{eqnarray*}

        \item (2a): Commutativity follows from the definition of $\sqcap$ directly.
        \item (3a):
        \begin{eqnarray*}
            \bar{\neg}(x\sqcap x)&=&e(\neg r(x\sqcap x))~\mbox{by definition of}~\bar{\neg}\\&=&e(\neg re(r(x)\wedge r(x)))~\mbox{by definiton of}~\sqcap\\&=&e(\neg (r(x)\wedge r(x)))~\mbox{as}~re~\mbox{is an identity map on}~{\tt SC}^{left}/\equiv^{left}\\&=& e(\neg r(x))\\&=&\bar{\neg}x~\mbox{by definition of}~\bar{\neg}
        \end{eqnarray*}
        
        \item (4a): it follows from the first part of condition (b).
        \item (5a):
        \begin{eqnarray*}
            x\sqcap (y\bar{\vee} z)&=&x\sqcap \bar{\neg}(\bar{\neg}y\sqcap\bar{\neg}z)~\mbox{by definition of}~\bar{\vee}\\&=&e(r(x)\wedge r(\bar{\neg}(\bar{\neg}y\sqcap\bar{\neg} z)))~\mbox{by definition of }~\sqcap\\&=&e(r(x)\wedge r(e(\neg r (\bar{\neg}y\sqcap \bar{\neg}z))))~\mbox{by definition of }~\bar{\neg}\\&=&e(r(x)\wedge \neg r (\bar{\neg}y\sqcap\bar{\neg}z))~\mbox{as}~re~\mbox{is an identity map on}~{\tt SC}^{left}/\equiv^{left}\\&=&e(r(x)\wedge \neg r e (r(\bar{\neg}y)\wedge r(\bar{\neg}z)))~\mbox{by definition of}~\sqcap\\&=&e(r(x)\wedge \neg  (re(\neg r(y))\wedge re(\neg r(z))))~\mbox{by definition of}~\bar{\neg}~\mbox{and}~re ~\mbox{is an identity map}\\&=&e(r(x)\wedge \neg  (\neg r(y)\wedge \neg r(z)))~\mbox{as}~re ~\mbox{is an identity map}\\&=&e(r(x)\wedge   ( r(y)\vee  r(z)))~\mbox{by De Morgan's Laws}\\&=&e((r(x)\wedge r(y))\vee(r(x) \wedge r(z)))~\mbox{by distributivity}\\&=& (x\sqcap y)\bar{\vee} (x\sqcap z)
        \end{eqnarray*}
        
         The last equality depends on the claim that for $u,v\in {\tt SC}^{left}/\equiv^{left}$, $e(u\vee v)=e(u)\bar{\vee} e(v)$. The proof of the claim is as follows: 
         \begin{eqnarray*}
             e(u)\bar{\vee} e(v)&=&\bar{\neg}(\bar{\neg}e(u)\sqcap\bar{\neg}e(v))~\mbox{by definition of}~\bar{\vee}\\&=&e(\neg r(\bar{\neg}e(u)\sqcap\bar{\neg}e(v)))~\mbox{by definition of }~\bar{\neg}\\&=&e(\neg re(r(\bar{\neg}e(u))\wedge r(\bar{\neg}e(v))))~\mbox{by definition of}~\sqcap\\&=&e(\neg (r(\bar{\neg}e(u))\wedge r(\bar{\neg}e(v))))~re~\mbox{is an identity map on}~{\tt SC}^{left}/\equiv^{left}\\&=&e(\neg (re(\neg re(u))\wedge re(\neg re(v))))~\mbox{by definition of }\bar{\neg}\\&=&e(\neg(\neg u\wedge\neg v))~re~\mbox{is an identity map on}~{\tt SC}^{left}/\equiv^{left}\\&=&e(u\vee v)~ \mbox{by De Morgan's Laws}
         \end{eqnarray*}
          
        \item (6a):
        \begin{eqnarray*}
            x\sqcap (x\bar{\vee} y)&=&e(r(x)\wedge r( x\bar{\vee} y))~\mbox{by definition of}~\sqcap\\&=&e(r(x)\wedge   ( r(x)\vee  r(y)))\\&=&e(r(x))~\mbox{by absorption law}\\&=&e(r(x)\wedge r(x))~\mbox{by idempotemcy}\\&=& x\sqcap x~\mbox{by definition of}~\sqcap
        \end{eqnarray*}
         where the second equality follows from the fact that $r(x\bar{\vee} y)=r(x)\vee r(y)$ for any $x,y\in\mathfrak{D}$. We prove the fact as follows: 

         \begin{eqnarray*}
             r(x\bar{\vee} y)&=&r(\bar{\neg}(\bar{\neg} x\sqcap\bar{\neg}y))~\mbox{by definition of}~\bar{\vee}\\&=&re(\neg r(\bar{\neg}x\sqcap\bar{\neg}y))~\mbox{by definition of}~\bar{\neg}\\&=&\neg re(r(\bar{\neg}x)\wedge r(\bar{\neg}y))~\mbox{as}~re~\mbox{is an identity map and by definition of}~\sqcap\\&=&\neg (r(\bar{\neg}x)\wedge r(\bar{\neg}y))~\mbox{as}~re~\mbox{is an identity map}\\&=&\neg(re(\neg r(x))\wedge re(\neg r(y)))~\mbox{by defintion of}~\bar{\neg}\\&=&\neg(\neg r(x)\wedge\neg r(y))~\mbox{as}~re~\mbox{is an identity map }\\&=&r(x)\vee r(y)~ \mbox{by De Morgan,s Laws}
         \end{eqnarray*}
         
        \item  (7a):
        \begin{eqnarray*}
            \bar{\neg}\bar{\neg}(x\sqcap y)&=&e(\neg re(\neg r(x\sqcap y))~\mbox{by definition of}~\bar{\neg}\\&=&e(\neg\neg r(x\sqcap y))~\mbox{as}~re~\mbox{is an identity map}\\&=&e(r(x\sqcap y))~\mbox{by law of double negation in Boolean algebra}\\&=&e(re(r(x)\wedge r(y)))~\mbox{by definition of}~\sqcap\\&=&e(r(x)\wedge r(y))~\mbox{as}~re~\mbox{is an identity map}\\&=&x\sqcap y~\mbox{by definition of }~\sqcap.
        \end{eqnarray*}
        
        \item (8a): 
        \begin{eqnarray*}
            x\sqcap\bar{\neg}x&=&e(r(x)\wedge re(\neg r(x)))~\mbox{by definition of}~\sqcap\\&=&e(r(x)\wedge \neg r(x))~re~\mbox{is an identity map on}~{\tt SC}^{left}/\equiv^{left}\\&=&e([(\bot_{s_{1}}, \boxng\bot_{s_{1}})])~\mbox{as}~r(x)~\mbox{is an element of the Boolean algebr}~{\tt SC}^{left}/\equiv^{left}\\&=&\bot_{\mathfrak{D}}~\mbox{by definition of}~\bot_{\mathfrak{D}}.
        \end{eqnarray*}
        
        \item (9a): 
        \begin{eqnarray*}
            \bar{\neg}\top_{\mathfrak{D}}&=&e(\neg r(\top_{\mathfrak{D}}))~\mbox{by definition of }~\bar{\neg}\\&=&e(\neg r(e^{\prime}([(\boxnm\bot_{s_{2}}, \bot_{s_{2}})])))~\mbox{by definition of}~\top_{\mathfrak{D}}\\&=&e(\neg\neg [(\bot_{s_{1}}, \boxng\bot_{s_{1}})] )~\mbox{by condition (c)}\\&=& e( [(\bot_{s_{1}}, \boxng\bot_{s_{1}})] )~\mbox{by law of double negation of Boolean algebra}\\&=&\bot_{\mathfrak{D}}~\mbox{by definition of }~\bot_{\mathfrak{D}}.
        \end{eqnarray*}
        \item (10a): the associativity follows from $r\circ e=id_{{\tt SC}^{left}/\equiv^{left}}$.
        \item (11a):
        \begin{eqnarray*}
            \top_{\mathfrak{D}}\sqcap \top_{\mathfrak{D}}&=&e(r(\top_{\mathfrak{D}})\wedge r(\top_{\mathfrak{D}}))~\mbox{by definition of}~\sqcap\\&=&e(re^{\prime}([(\boxnm\bot_{s_{2}}, \bot_{s_{2}})]))~\mbox{by definition of}~\top_{\mathfrak{D}}\\&=& e(\neg [(\bot_{s_{1}}, \boxng\bot_{s_{1}})])~\mbox{by condition (c)}\\&=&e(\neg re( [(\bot_{s_{1}}, \boxng\bot_{s_{1}})]))~re~\mbox{is an identity map on}~{\tt SC}^{left}/\equiv^{left}\\&=&\bar{\neg} \bot_{\mathfrak{D}}~\mbox{by definition of}~\bar{\neg}~\mbox{and}~\bot_{\mathfrak{D}}
        \end{eqnarray*}

        \item (12): It  follows from condition (a).
    \end{description}
Analogously, we can prove that the structure satisfies axioms (1b)-(11b) for dBa. In addition, by condition (d), it follows that for any $x\in \mathfrak{D}$, either $x\sqcap x=x$ or $x\sqcup x=x$. Hence,  $(\mathfrak{D}, \sqcap, \sqcup, \bar{\neg}, \lrcorner, \top_{\mathfrak{D}}, \bot_{\mathfrak{D}})$ is a pure dBa.

Conversely, assume that $(\mathfrak{D}, \sqcap, \sqcup, \bar{\neg}, \lrcorner, \top_{\mathfrak{D}}, \bot_{\mathfrak{D}})$ is a pure dBa. Then, axiom (12) of dBa, i.e. all $x, y\in D$, $(x \sqcap  x) \sqcup (x \sqcap x) = (x \sqcup x) \sqcap (x \sqcup x)$, implies condition (a). Translating axioms (4a): $ x  \sqcap (x \sqcup y)=x \sqcap  x $ and (4b): $x \sqcup  (x \sqcap y) = x \sqcup   x$ in terms of $e, r, e^{\prime}, r^{\prime}$ we have condition (b). In addition, by axiom (9a): $\bar{\neg} \top_{\mathfrak{D}}=\bot_{\mathfrak{D}}$, we have $\bar{\neg} \top_{\mathfrak{D}}=e(\neg r(\top_{\mathfrak{D}}))=e(\neg r(e^{\prime}([(\boxnm\bot_{s_{2}}, \bot_{s_{2}})]))$ and $\bot_{\mathfrak{D}}=e( [(\bot_{s_{1}}, \boxminus\bot_{s_{1}})] )$. Hence, $e(\neg r(e^{\prime}([(\boxnm \bot_{s_{2}}, \bot_{s_{2}})]))=e( [(\bot_{s_{1}}, \boxminus\bot_{s_{1}})] )$ which implies that $\neg r(e^{\prime}([(\boxnm\bot_{s_{2}}, \bot_{s_{2}})])= [(\bot_{s_{1}}, \boxminus\bot_{s_{1}})] $, as $e$ is injective, from which the first part of condition (c) follows immediately. The second part of condition (c) is proved in the same way. Finally, because pure dBa satisfies that for all $x$, either $x\sqcap x=x$ or $x\sqcup x=x$, we have condition (d). 
\end{proof}

Next, we show that a construction satisfying conditions in Theorem~\ref{dbalogic} is always possible. However, before proving that, we need the following technical result.  
\begin{proposition}
\label{requiredproof}
    {\rm  Let $\varphi\in Fm(\textbf{BM})_{s_{1}}$ and $\psi\in Fm(\textbf{BM})_{s_{2}}$.
    \bla
        \item $\models^{\mathfrak{C}_{\mathbb{K}}}_{s_1}\boxnm \psi\leftrightarrow\boxnm \boxng \boxnm\psi$.
        \item  $ \models^{\mathfrak{C}_{\mathbb{K}}}_{s_1}\boxnm\boxng\varphi\rightarrow\boxnm(\boxng\varphi\wedge\psi)$.
        \item $ \models^{\mathfrak{C}_{\mathbb{K}}}_{s_1}\varphi\rightarrow\boxnm\boxng\varphi$.
    \ela}
\end{proposition}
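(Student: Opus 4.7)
The plan is to prove all three items directly from the semantic clauses for $\boxnm$ and $\boxng$ in a $\mathbb{K}$-based model $\mathfrak{M}=(G,M,I,v)$; each amounts to a short calculation using only that $\mathfrak{M},g\models_{s_1}\boxnm\chi$ iff $\mathfrak{M},m\models_{s_2}\chi$ implies $Igm$ for every $m$, and dually for $\boxng$. Alternatively, since \textbf{KF} is contained in \textbf{BM} and the statements involve only $\boxng$ and $\boxnm$, the three claims are also immediate syntactic consequences of the axioms $B^{1}$, $B^{2}$, and the monotonicity property (Proposition~\ref{needproflattic}(b)), which lifts from \textbf{KF} to \textbf{BM}.

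For (c), I would fix $g\in G$ with $\mathfrak{M},g\models_{s_1}\varphi$ and show $\mathfrak{M},g\models_{s_1}\boxnm\boxng\varphi$: pick any $m\in M$ with $\mathfrak{M},m\models_{s_2}\boxng\varphi$; the clause for $\boxng$ gives $Ig'm$ for every $g'$ satisfying $\varphi$, and instantiating at $g'=g$ yields $Igm$. This is essentially the semantic counterpart of the axiom $B^{1}$ of \textbf{KF}.

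For (a), the forward implication $\boxnm\psi\to\boxnm\boxng\boxnm\psi$ is an instance of (c) taking $\varphi:=\boxnm\psi$. For the converse, I would invoke the analogue of $B^{2}$, namely $\psi\to\boxng\boxnm\psi$, whose semantic validity is dual to (c). Then applying monotonicity of $\boxnm$ (Proposition~\ref{needproflattic}(b)) reverses this arrow to produce $\boxnm\boxng\boxnm\psi\to\boxnm\psi$.

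Part (b) is the simplest: from the propositional tautology $\boxng\varphi\wedge\psi\to\boxng\varphi$, monotonicity of $\boxnm$ yields $\boxnm\boxng\varphi\to\boxnm(\boxng\varphi\wedge\psi)$. Semantically this is equally transparent: if $\mathfrak{M},g\models_{s_1}\boxnm\boxng\varphi$ and $m$ satisfies $\boxng\varphi\wedge\psi$, then $m$ satisfies the first conjunct $\boxng\varphi$, forcing $Igm$. I do not foresee a real obstacle here; the one point worth being explicit about is that Proposition~\ref{needproflattic}(b) is available in \textbf{BM} because its proof uses only the \textbf{KF}-fragment rules $(UG^{2}_{\boxnm})$ and $(K^{2}_{\boxnm})$, all of which are carried over into \textbf{BM}.
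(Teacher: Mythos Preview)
Your proposal is correct and follows the same direct semantic verification that the paper uses; the paper only spells out part (b) and declares the rest straightforward, so your treatment is actually more detailed. The additional syntactic route you sketch via the $B^{1}$, $B^{2}$ axioms and Proposition~\ref{needproflattic}(b) is a harmless bonus rather than a different approach.
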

\begin{proof}
Proof of the proposition is straightforward. We only give proof for $(b)$.
Let $g_{0}\in G$, and $\mathfrak{M}, g_{0}\models_{s_{1}}\boxnm\boxng\varphi$. By 
 the semantics of $\boxnm$, $(\mbox{for all}~ m\in M,~\mathfrak{M}, m\models_{s_{2}} \boxng\varphi\implies \neg Jg_{0}m)$.
      %&\iff& (\mbox{for all}~ m\in M,(\mbox{for all}~ g\in G, ~\mathfrak{M}, g\models_{s_{1}} \boxng\varphi\implies \neg Jgm)\implies \neg Jg_{0}m)
Now let $m_{0}\in M$ and $\mathfrak{M}, m_{0}\models_{s_{2}} \boxng\varphi\wedge\psi$ which implies that $\mathfrak{M}, m_{0}\models_{s_{2}} \boxng\varphi$ and $\mathfrak{M}, m_{0}\models_{s_{2}} \psi$. From the above, we have $\neg Jgm$. Therefore $\mathfrak{M}, g_{0}\models_{s_{2}} \boxnm(\boxng\varphi\wedge\psi)$.  Since $g_{0}$ is an arbitrary element, $ \models^{\mathfrak{C}_{\mathbb{K}}}_{s_1}\boxnm\boxng\varphi\rightarrow\boxnm(\boxng\varphi\wedge\psi)$.
\end{proof}

Let us define an equivalence relation $\equiv_{\tt sc}$ over \texttt{SC} by $(\varphi_{1}, \psi_{1})\equiv_{\tt sc}(\varphi_{2}, \psi_{2})$ iff  $\models^{\mathfrak{C}_{\mathbb{K}}}_{s_1}\varphi_{1}\leftrightarrow\varphi_{2}$  and  $\models^{\mathfrak{C}_{\mathbb{K}}}_{s_2}\psi_{1}\leftrightarrow\psi_{2}$ and consider $\mathfrak{D}:=\texttt{SC}/\equiv_{\tt SC}$. Then, we can define the following maps:
\begin{center}
    {\rm $r: \mathfrak{D}\rightarrow  {\tt SC}^{left}/\equiv^{left}$, $r([(\varphi, \psi)])=[(\varphi, \boxng\varphi)]$ and $e:{\tt SC}^{left}/\equiv^{left}\rightarrow  \mathfrak{D}$, $e([(\varphi, \psi)])=[(\varphi, \psi)]$
    
   \noindent  $r^{\prime}: \mathfrak{D}\rightarrow  {\tt SC}^{right}/\equiv^{right}$, $r^{\prime}([(\varphi, \psi)])=[(\boxnm\psi, \psi)]$ and $e^{\prime}:  {\tt SC}^{right}/\equiv^{right}\rightarrow  \mathfrak{D}$, $e^{\prime}([(\varphi, \psi)])=[(\varphi, \psi)]$}
\end{center}

\begin{proposition}
    {\rm The maps defined above satisfy $r\circ e=id_{ {\tt SC}^{left}/\equiv^{left}}$ and $r^{\prime}\circ e^{\prime}=id_{ {\tt SC}^{right}/\equiv^{right}}$ and conditions $(a)-(d)$ of Theorem \ref{dbalogic}.}
\end{proposition}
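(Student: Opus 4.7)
The plan is to unpack every map and condition to concrete formula equivalences, and reduce the verification to three kinds of facts already available: the semiconcept conditions themselves, the composition axioms (B$^1$) and (B$^2$), and Proposition~\ref{requiredproof}. First I would establish that each of the four maps is well-defined on the relevant quotient. For $r$ and $r^{\prime}$, the required invariance under $\equiv_{\tt SC}$ follows immediately from Proposition~\ref{needproflattic} (provable equivalence is preserved by $\boxng$ and $\boxnm$). For $e$ and $e^{\prime}$, if $(\varphi,\psi)\equiv^{left}(\varphi',\psi')$ then both formulas are left semiconcepts with $\psi\leftrightarrow\boxng\varphi$ and $\psi'\leftrightarrow\boxng\varphi'$; hence $\varphi\leftrightarrow\varphi'$ forces $\psi\leftrightarrow\psi'$, giving $\equiv_{\tt SC}$. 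The identities $r\circ e=\mathrm{id}$ and $r^{\prime}\circ e^{\prime}=\mathrm{id}$ then collapse to the defining equivalences of left and right semiconcepts.

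For condition (a), pick $x=[(\varphi,\psi)]$ and compute directly:
\[
e\circ r\circ e^{\prime}\circ r^{\prime}(x)=[(\boxnm\psi,\boxng\boxnm\psi)],\qquad
e^{\prime}\circ r^{\prime}\circ e\circ r(x)=[(\boxnm\boxng\varphi,\boxng\varphi)].
\]
Since $x$ is a semiconcept, split cases. If $\boxng\varphi\leftrightarrow\psi$, substituting $\psi\mapsto\boxng\varphi$ into the left side gives the right side provided $\boxng\boxnm\boxng\varphi\leftrightarrow\boxng\varphi$; this is the $s_2$-dual of Proposition~\ref{requiredproof}(a), which I would derive in one line from axioms (B$^1$), (B$^2$) and the antitonicity of $\boxng$. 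If instead $\varphi\leftrightarrow\boxnm\psi$, then the substitution works the other way around, and the required $\boxnm\boxng\boxnm\psi\leftrightarrow\boxnm\psi$ is Proposition~\ref{requiredproof}(a) itself.

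Condition (b) will be the main obstacle because it is where the two kinds of semiconcepts interact and where the $y$ parameter is uncontrolled. Unpacking the first identity with $x=[(\varphi_1,\psi_1)]$, $y=[(\varphi_2,\psi_2)]$, it reduces to proving $\varphi_1\to\boxnm(\psi_1\wedge\psi_2)$ in every $\mathbb{K}$-based model (the corresponding $s_2$-component equivalence is then automatic since $\boxng$ is a congruence). Here I again split on the type of $x$. If $x$ is a right semiconcept, $\varphi_1\leftrightarrow\boxnm\psi_1$ and antitonicity of $\boxnm$ in its argument yields $\boxnm\psi_1\to\boxnm(\psi_1\wedge\psi_2)$. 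If $x$ is a left semiconcept, $\psi_1\leftrightarrow\boxng\varphi_1$ and I chain axiom (B$^1$) with Proposition~\ref{requiredproof}(b): $\varphi_1\to\boxnm\boxng\varphi_1\to\boxnm(\boxng\varphi_1\wedge\psi_2)=\boxnm(\psi_1\wedge\psi_2)$. The second identity is verified by the dual argument, using the $s_2$-analogue of Proposition~\ref{requiredproof}(b) (which is provable by the same contraposition argument that establishes the original).

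Conditions (c) and (d) are short. For (c), after unfolding the negations in ${\tt SC}^{left}/\equiv^{left}$ and ${\tt SC}^{right}/\equiv^{right}$, the two equalities reduce to $\boxnm\bot_{s_2}\leftrightarrow\top_{s_1}$ and $\boxng\bot_{s_1}\leftrightarrow\top_{s_2}$, both vacuously valid from the semantic clauses for window modalities. For (d), with $x=[(\varphi,\psi)]$, direct computation gives $er(x)=[(\varphi,\boxng\varphi)]$ and $e^{\prime}r^{\prime}(x)=[(\boxnm\psi,\psi)]$; so $er(x)=x$ is exactly the left-semiconcept condition and $e^{\prime}r^{\prime}(x)=x$ is exactly the right-semiconcept condition. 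Membership of $x$ in ${\tt SC}$ guarantees at least one, proving (d).
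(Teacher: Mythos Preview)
Your proposal is correct and follows essentially the same route as the paper: compute both compositions explicitly, split on whether the semiconcept is left or right, and reduce each case to axiom~(B$^1$)/(B$^2$), antitonicity of $\boxnm,\boxng$ (Proposition~\ref{needproflattic}), and the closure properties of Proposition~\ref{requiredproof}. Your treatment is in fact slightly more thorough, since you explicitly verify well-definedness of $r,r',e,e'$ on the quotients, a point the paper passes over silently.
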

\begin{proof}
    First, for $[(\varphi, \psi)]\in {\tt SC}^{left}/\equiv^{left} $,  $r\circ e ([(\varphi, \psi)])=r([(\varphi,\psi)])=[(\varphi, \boxng\varphi)]= [(\varphi, \psi)]$, as $\models^{\mathfrak{C}_{\mathbb{K}}}_{s_{2}} \psi\leftrightarrow\boxng\varphi$. Hence, $r\circ e=id_{ {\tt SC}^l/\equiv^l}$. Similarly we can show that $r^{\prime}\circ e^{\prime}=id_{ {\tt SC}^{right}/\equiv^{right}}$.

    Second, for $[(\varphi, \psi)]\in \mathfrak{D}$, $e\circ r\circ e^{\prime}\circ r^{\prime}([(\varphi, \psi)])=[(\boxnm\psi, \boxng\boxnm\psi)]$ and $e^{\prime}\circ r^{\prime}\circ e\circ r([(\varphi, \psi)])=[(\boxnm\boxng\varphi, \boxng\varphi)]$.  As $(\varphi, \psi)$ is a semiconcept, $\models^{\mathfrak{C}_{\mathbb{K}}}_{s_1} \varphi\leftrightarrow \boxnm\psi$ or  $\models^{\mathfrak{C}_{\mathbb{K}}}_{s_2}  \boxng\varphi\leftrightarrow \psi$. If $\models^{\mathfrak{C}_{\mathbb{K}}}_{s_1} \varphi\leftrightarrow \boxnm\psi$, then $\models^{\mathfrak{C}_{\mathbb{K}}}_{s_1}\boxng \varphi\leftrightarrow \boxng \boxnm\psi$ by Proposition \ref{needproflattic}, whence $\models^{\mathfrak{C}_{\mathbb{K}}}_{s_1}\boxnm\boxng \varphi\leftrightarrow\boxnm \boxng \boxnm\psi$. Thus $\models^{\mathfrak{C}_{\mathbb{K}}}_{s_1}\boxnm\boxng \varphi\leftrightarrow \boxnm\psi$ by Proposition \ref{requiredproof} (a).  Hence,  $e\circ r\circ e^{\prime}\circ r^{\prime}([(\varphi, \psi)])=e^{\prime}\circ r^{\prime}\circ e\circ r([(\varphi, \psi)])$. The proof for the case of $\models^{\mathfrak{C}_{\mathbb{K}}}_{s_2}  \boxng\varphi\leftrightarrow \psi$ is similar.

    Third, for $[(\varphi_{1}, \psi_{1})], [(\varphi_{2}, \psi_{2})]\in \mathfrak{D} $, $e(r([(\varphi_{1}, \psi_{1})])\wedge r(e^{\prime}(r^{\prime}([(\varphi_{1}, \psi_{1})])\vee r^{\prime}([(\varphi_{2}, \psi_{2})]))))=[(\varphi_{1}\wedge \boxnm(\psi_{1}\wedge\psi_{2}), \boxng(\varphi_{1}\wedge \boxnm(\psi_{1}\wedge\psi_{2})))]$. 
    It is clear that $\models^{\mathfrak{C}_{\mathbb{K}}}_{s_1}  \varphi_{1}\wedge \boxnm(\psi_{1}\wedge\psi_{2})\rightarrow \varphi_{1}$. As $(\varphi_{1}, \psi_{1})$ is a semiconcept, we have  $\models^{\mathfrak{C}_{\mathbb{K}}}_{s_1} \varphi_{1}\leftrightarrow \boxnm\psi_{1}$ or  $\models^{\mathfrak{C}_{\mathbb{K}}}_{s_2}  \boxng\varphi_{1}\leftrightarrow \psi_{1}$. Let us assume $\models^{\mathfrak{C}_{\mathbb{K}}}_{s_2}  \boxng\varphi_{1}\leftrightarrow \psi_{1}$ whence  $\models^{\mathfrak{C}_{\mathbb{K}}}_{s_2}  \boxng\varphi_{1}\wedge\psi_{2}\leftrightarrow \psi_{1}\wedge\psi_{2}$. Then, by Proposition \ref{needproflattic}, $\models^{\mathfrak{C}_{\mathbb{K}}}_{s_1}  \boxnm(\boxng\varphi_{1}\wedge\psi_{2})\leftrightarrow \boxnm(\psi_{1}\wedge\psi_{2})$ which implies that $\models^{\mathfrak{C}_{\mathbb{K}}}_{s_1} \boxnm\boxng\varphi_{1}\rightarrow  \boxnm(\psi_{1}\wedge\psi_{2})$. By Proposition~\ref{requiredproof} (b),  $ \models^{\mathfrak{C}_{\mathbb{K}}}_{s_1}\varphi_{1}\rightarrow\boxnm\boxng\varphi_{1}$. Hence, we have $ \models^{\mathfrak{C}_{\mathbb{K}}}_{s_1}\varphi_{1}\rightarrow\boxnm(\boxng\varphi_{1}\wedge\psi_{2})$ and $ \models^{\mathfrak{C}_{\mathbb{K}}}_{s_1}\varphi_{1}\rightarrow\varphi_{1}\wedge\boxnm(\boxng\varphi_{1}\wedge\psi_{2})$. %which implies that  $ \models^{\mathfrak{C}_{\mathbb{K}}}_{s_1}\varphi_{1}\leftrightarrow\varphi_{1}\wedge\boxnm(\boxng\varphi_{1}\wedge\psi_{2})$.
   Therefore, $e(r([(\varphi_{1}, \psi_{1})])\wedge r(e^{\prime}(r^{\prime}([(\varphi_{1}, \psi_{1})])\vee r^{\prime}([(\varphi_{2}, \psi_{2})]))))=[(\varphi_{1}\wedge \boxnm(\psi_{1}\wedge\psi_{2}), \boxng(\varphi_{1}\wedge \boxnm(\psi_{1}\wedge\psi_{2})))]=[(\varphi_{1}, \boxng\varphi_{1})]=er([(\varphi_{1}, \psi_{1})])$. The proof for the case of $\models^{\mathfrak{C}_{\mathbb{K}}}_{s_1} \varphi_{1}\leftrightarrow \boxnm\psi_{1}$ is similar.
   Analogously, we can also show that $e^{\prime}(r^{\prime}([(\varphi_{1}, \psi_{1})])\vee r^{\prime}(e(r([(\varphi_{1}, \psi_{1})])\wedge r([(\varphi_{2}, \psi_{2})]))))=e^{\prime}(r^{\prime}([(\varphi_{2}, \psi_{2})]))$.

Forth,  $r(e^{\prime}([(\boxnm\bot_{s_{2}}, \bot_{s_{2}})]))=[(\boxnm\bot_{s_{2}}, \boxng\boxnm\bot_{s_{2}})]$.  To prove that $r(e^{\prime}([(\boxnm\bot_{s_{2}}, \bot_{s_{2}})]))=\neg [(\bot_{s_{1}}, \boxng\bot_{s_{1}})]$, it is sufficient to show that $\models^{\mathfrak{C}_{\mathbb{K}}}_{s_1}\boxnm\bot_{s_{2}}\leftrightarrow\neg\bot_{s_{1}}$. Now for all $\mathbb{K}$-based model $\mathfrak{M}$ and $g\in G$, $\mathfrak{M}, g \models\neg\bot_{s_{1}}$ and  $\mathfrak{M}, g\models\boxnm\bot_{s_{2}}$. Hence $\models^{\mathfrak{C}_{\mathbb{K}}}_{s_1}\boxnm\top_{s_{2}}\leftrightarrow\neg\bot_{s_{1}}$. Analogously, we can show that $r^{\prime}(e([(\bot_{s_{1}}, \boxng\bot_{s_{1}})]))=\neg [(\boxnm\bot_{s_{2}}, \bot_{s_{2}})]$.

Finally, for any $[(\varphi, \psi)]\in \mathfrak{D}$, $(\varphi, \psi)$ is either a left or a right semiconcept. Hence,  $\models^{\mathfrak{C}_{\mathbb{K}}}_{s_2}  \boxng\varphi\leftrightarrow \psi$ or $\models^{\mathfrak{C}_{\mathbb{K}}}_{s_1} \varphi\leftrightarrow \boxnm\psi$, which implies $r\circ e([(\varphi, \psi)])=[(\varphi, \boxng\varphi)]=[(\varphi, \psi)]$ or $r^{\prime}\circ e^{\prime}([(\varphi, \psi)])=[(\boxnm\psi, \psi)]=[(\varphi, \psi)]$, respectively.
\end{proof}
%$:=\{(\varphi, \psi) ~:~ (\varphi, \psi)~ \mbox{is a prepresentative of the class}~[(\varphi, \psi)]\in L_{SC}/\equiv^{l}_{SC} \}$

Note that a logical semiconcept is also a logical protoconcept. We denote the set of all logical protoconcepts by ${\tt PR}$. Thus $ {\tt SC}^{left}\cup {\tt SC}^{right}\subseteq {\tt PR}$.
As done with the set of logical semiconcepts, we can define an equivalence relation $\equiv_{\tt PR}$ over the set ${\tt PR}$. Let $\mathfrak{D}_1={\tt PR}/\equiv_{\tt PR}$ be the set of its equivalence classes. Then, with the definition of the following maps:
    \begin{center}
    {\rm $r_{1}: \mathfrak{D}_1\rightarrow  {\tt SC}^{left}/\equiv^{left}$, $r_{1}([(\varphi, \psi)])=[(\varphi, \boxng\varphi)]$ and $e_{1}:  {\tt SC}^{left}/\equiv^{left}\rightarrow\mathfrak{D}_1$, $e_{1}([(\varphi, \psi)])=[(\varphi, \psi)]$
    
   \noindent  $r_{1}^{\prime}: \mathfrak{D}_1\rightarrow  {\tt SC}^{right}/\equiv^{right}$, $r_{1}^{\prime}([(\varphi, \psi)])=[(\boxnm\psi, \psi)]$ and $e_{1}^{\prime}:  {\tt SC}^{right}/\equiv^{right} \rightarrow\mathfrak{D}_1$, $e_{1}^{\prime}([(\varphi, \psi)])=[(\varphi, \psi)]$}
\end{center}
we can show that  $r_{1}\circ e_{1}=id_{ {\tt SC}^{left}/\equiv^{left}}$, $r_{1}^{\prime}\circ e_{1}^{\prime}=id_{ {\tt SC}^{right}/\equiv^{right}}$, and $r_{1}, e_{1}, r^{\prime}_{1}$ and $e^{\prime}_{1}$ satisfy  (a)-(c) of Theorem \ref{dbalogic}. 

Let us define operations on the set $\mathfrak{D}_1$ as follows: for all $x, y\in  \mathfrak{D}_1$

 \begin{center}
     $x\sqcap y:= e_{1}(r_{1}(x)\wedge r_{1}(y))$ $x\sqcup y:= e_{1}^{\prime}(r_{1}^{\prime}(x)\vee r_{1}^{\prime}(y))$\\
     $\bar{\neg} x:=e_{1}(\neg r_{1}(x))$ and $\lrcorner x:=e_{1}^{\prime}(\neg r_{1}^{\prime}(x))$\\
    $\bot_{\mathfrak{D}}:=e_{1}([(\bot_{s_{1}}, \boxng\bot_{s_{1}})])$ and $\top_{\mathfrak{D}}:= e_{1}^{\prime}([(\boxnm\bot_{s_{2}}, \bot_{s_{2}})])$
 \end{center}
 Then we can show that $(\mathfrak{D}_1, \sqcap, \sqcup, \bar{\neg}, \lrcorner, \top, \bot)$ is a dBa. Moreover, we can prove the following general theorem.
\begin{theorem}
    {\rm Let $(B, \wedge, -, 0,1)$ and $(B',\vee^{\prime},-^{\prime}, 0',1')$ be two Boolean algebras and let $r:A \rightleftharpoons B: e$ and $r^{\prime}:A \rightleftharpoons B^{\prime}: e^{\prime}$ be  pairs of maps such that $r\circ e=id_{B}$ and $r^{\prime}\circ e^{\prime}=id_{B^{\prime}}$. $\textbf{A}:=(A, \sqcap, \sqcup, \bar{\neg},\lrcorner, e^{\prime}(1'), e(0))$ is a universal algebra where, $x\sqcap y:=e(r(x)\wedge r(y)), x\sqcup y:=e^{\prime}(r^{\prime}(x)\vee' r^{\prime}(y)), \neg x:=e(-r(x))$ and $\lrcorner x:=e^{\prime}(-^{\prime}r^{\prime}(x))$. Then $\textbf{A}$ is a dBa iff the following holds
    \bla
    \item $e\circ r\circ e^{\prime}\circ r^{\prime}=e^{\prime}\circ r^{\prime}\circ e\circ r$
    \item $e(r(x)\wedge r(e^{\prime}(r^{\prime}(x)\vee' r^{\prime}(y))))=e(r(x))$ and $e^{\prime}(r^{\prime}(x)\vee' r^{\prime}(e(r(x)\wedge r(y))))=e^{\prime}(r^{\prime}(x))$ for all $x, y\in A$.
    \item  $r(e^{\prime}(1^{\prime}))=1$ and $r^{\prime}(e(0))=0'$.
    \ela
    Moreover every dBa can be obtained from such a construction.}
\end{theorem}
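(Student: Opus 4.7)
The plan is to follow the same template as the proof of Theorem \ref{dbalogic}, observing that essentially nothing in that argument used the specific nature of the Boolean algebras $\textbf{SC}^{left}/\equiv^{left}$ and $\textbf{SC}^{right}/\equiv^{right}$; only the facts that they were Boolean algebras and that $r\circ e = id$, $r'\circ e' = id$ were used. So for the forward direction, I would assume (a), (b), (c) and re-run the same verifications of dBa axioms (1a)--(11a), (1b)--(11b), and (12), replacing the specific Boolean-algebra reasoning with the generic Boolean identities in $(B,\wedge,-,0,1)$ and $(B',\vee',-',0',1')$. As in Theorem \ref{dbalogic}, axioms (1a), (3a), (7a) follow from $r\circ e = id_B$; associativity (10a) follows similarly; (2a) is immediate from commutativity of $\wedge$; the key identity (5a) reduces, via the auxiliary lemma $e(u\vee u') = e(u)\,\bar{\vee}\,e(v)$ and $r(x\,\bar{\vee}\,y) = r(x)\vee r(y)$, to the distributive law in $B$; (4a) and (6a) use condition (b); (8a) uses that $B$ is a Boolean algebra; and (9a), (11a) use condition (c). The primed axioms follow by symmetry. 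Finally, axiom (12) is precisely condition (a) after expanding the definitions.

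For the reverse direction, I would read off each of (a), (b), (c) from the corresponding dBa axioms: (12) unpacks to $e\circ r\circ e'\circ r' = e'\circ r'\circ e\circ r$ after substituting the definitions of $\sqcap$ and $\sqcup$; axioms (4a) and (4b) give the two equalities of (b); and axiom (9a) (together with injectivity of $e$, which is automatic since $r\circ e = id_B$) yields $r(e'(1')) = 1$, while (9b) gives $r'(e(0)) = 0'$. This is the easiest part because each condition is essentially a single dBa axiom in disguise.

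The most substantial part is the "moreover" statement: every dBa arises from such a construction. Given a dBa $\textbf{D}$, I would take $B := \textbf{D}_\sqcap$ and $B' := \textbf{D}_\sqcup$ from Proposition \ref{pro1}, and define $r\colon D\to D_\sqcap$ by $r(x) := x\sqcap x$ with $e\colon D_\sqcap \hookrightarrow D$ the inclusion, and analogously $r'(x) := x\sqcup x$, $e'\colon D_\sqcup \hookrightarrow D$. The equalities $r\circ e = id_{D_\sqcap}$ and $r'\circ e' = id_{D_\sqcup}$ are immediate from the definition of $D_\sqcap$ and $D_\sqcup$. I would then verify that the \emph{induced} operations from the theorem recover the original ones on $\textbf{D}$: for example, $e(r(x)\wedge r(y)) = (x\sqcap x)\sqcap(y\sqcap y) = x\sqcap y$ using axioms (1a), (2a); $e(-r(x)) = \bar{\neg}(x\sqcap x) = \bar{\neg}x$ by (3a); the bound $e(0_{D_\sqcap}) = \bot$ and $e'(1_{D_\sqcup}) = \top$ are by definition of the underlying Boolean algebras. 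Conditions (a), (b), (c) then hold because they are consequences of the dBa axioms, as established in the reverse direction above.

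The main obstacle is largely bookkeeping: the forward direction requires methodically reproducing roughly a dozen axiom verifications from the proof of Theorem \ref{dbalogic}, and the "moreover" part requires carefully matching up the induced operations with the original dBa operations on \emph{all} of $D$ (not only on $D_\sqcap$ or $D_\sqcup$), relying throughout on axioms such as (1a), (3a), (11a)--(11b) to absorb the duplicated $\sqcap\sqcap$ or $\sqcup\sqcup$ that appear from applying $r$ and $r'$.
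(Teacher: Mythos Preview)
Your proposal is correct and follows essentially the same approach as the paper: the iff is handled by re-running the verifications from Theorem \ref{dbalogic} in the abstract setting, and the ``moreover'' clause is proved by taking $B=\textbf{D}_\sqcap$, $B'=\textbf{D}_\sqcup$ with $r(x)=x\sqcap x$, $r'(x)=x\sqcup x$ and $e,e'$ the inclusions, then checking that the induced operations recover the original ones via axioms (1a), (3a), (11a), (11b). One tiny slip: axiom (6a) does not use condition (b) but rather the auxiliary identity $r(x\,\bar{\vee}\,y)=r(x)\vee r(y)$ together with absorption in $B$; and your shortcut of deducing (a)--(c) from the reverse direction once the operations match is a slight streamlining of the paper, which verifies (a)--(c) directly from the dBa axioms before matching operations.
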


\begin{proof}
    The first part of the proof is similar to that of Theorem \ref{dbalogic}. 
    
    To show that any dBa can be obtained from such a construction, let $\textbf{D}:=(D, \sqcap , \sqcup, \bar{\neg}, \lrcorner, \top, \bot)$ be a  dBa. Then, by Proposition \ref{pro1}, the two Boolean algebras in consideration are 
    \[(B,\wedge,-,0,1)=\textbf{D}_{\sqcap}:=(D_{\sqcap},\sqcap,\bar{\neg},\bot,\bar{\neg}\bot)\]
    \[(B',\vee',-',0',1')=\textbf{D}_{\sqcup}:=(D_{\sqcup},\sqcup,\lrcorner,\lrcorner\top,\top).\]
    We thus define the following maps:

    $r:D\rightarrow D_{\sqcap}$ by $r(x)=x\sqcap x$ for $x\in D$ and $e:D_{\sqcap}\rightarrow D$, by $e(x)=x$ for $x\in D_{\sqcap}$.
 
    $r^{\prime}:D\rightarrow D_{\sqcup}$ by $r^{\prime}(x)=x\sqcup x$ for $x\in D$ and $e^{\prime}:D_{\sqcup}\rightarrow D$, by $e^{\prime}(x)=x$ for $x\in D_{\sqcup}$.

 These maps are well-defined as $r(x)\sqcap r(x)=r(x)$ and $r'(x)\sqcup r'(x)=r'(x)$ follow from axioms (1a),(2a) and  (1b),(2b) of dBa, respectively. Also, by the definition, it is clear that $r\circ e=id_{D_{\sqcap}}$ and $r^{\prime}\circ e^{\prime}=id_{D_{\sqcup}}$. 
 
 We now show that they also satisfy conditions (a)-(c). 
 For condition (a), let $x\in D$. Then,  

 \begin{eqnarray*}
     e\circ r\circ e^{\prime}\circ r^{\prime}(x)&=&e\circ r\circ e^{\prime}(x\sqcup x)~\mbox{by definition  of}~r^{\prime}\\&=&e\circ r(x\sqcup x)~\mbox{by definition  of}~e^{\prime}\\&=&(x\sqcup x)\sqcap (x\sqcup x)~\mbox{by definition of }~ r~\mbox{and}~e\\&=&(x\sqcap x)\sqcup (x\sqcap x)~\mbox{by dBa axioms (12)}.
 \end{eqnarray*}
 Similar to the above we can show that $e^{\prime}\circ r^{\prime}\circ e\circ r(x)=(x\sqcap x)\sqcup (x\sqcap x)$. So $e\circ r\circ e^{\prime}\circ r^{\prime}(x)=e^{\prime}\circ r^{\prime}\circ e\circ r(x)$.

 For condition (b), let  $x, y\in D$. Then,
 
 \begin{eqnarray*}
     e(r(x)\sqcap r(e^{\prime}(r^{\prime}(x)\sqcup r^{\prime}(y))))&=&e(x\sqcap x\sqcap r(x\sqcup x\sqcup y\sqcup y))~\mbox{using definition of}~r, r^{\prime}, e^{\prime}\\&=&e(x\sqcap r(x\sqcup y))~\mbox{by dBa axioms (1a),(1b),(2b)}\\&=&x\sqcap(x\sqcup y)\sqcap (x\sqcup y)~\mbox{by definition of }~e~\mbox{and}~r\\&=&x\sqcap x\sqcap (x\sqcup y)~\mbox{by dBa axiom (4a)}\\&=&x\sqcap x\sqcap x~\mbox{by dBa axiom (4a),(10a)}\\&=&x\sqcap x=e(r(x))~\mbox{by dBa axiom (1a) and definition of}~e,r.
 \end{eqnarray*}
 
  Similarly, we can show that $e^{\prime}(r^{\prime}(x)\sqcup r^{\prime}(e(r(x)\sqcap r(y))))=e^{\prime}(r^{\prime}(x))$. For condition (c), 
  \begin{eqnarray*}
      r(e^{\prime}(1'))&=&r(e^{\prime}(\top))~\mbox{by the definition of}~1'\\&=&r(\top)~\mbox{by the definition of}~e'~\mbox{and}~\top\in D_{\sqcup}\\&=&\top\sqcap \top~\mbox{by the definition of}~r\\&=&\bar{\neg}\bot=1~\mbox{by dBa axiom (11a)}
  \end{eqnarray*}
Using dBa axiom (11b) and the fact that $\bot\in D_{\sqcap}$, we can derive the following equation in a similar way.
   \begin{eqnarray*}
 r'(e(0))&=&r'(e(\bot))=r'(\bot)\\
  &=&\bot\sqcup\bot=\lrcorner\top=0'. 
   \end{eqnarray*}
 
 Hence,  by the first part of the theorem, $(D, \sqcap_1, \sqcup_1, \bar{\neg}_1, \lrcorner_1, \top, \bot )$ is a dBa where $x\sqcap_1 y=e(r(x)\sqcap r(y))=x\sqcap y$, $x\sqcup_1 y=e^{\prime}(r^{\prime}(x)\sqcup r^{\prime}(y))=x\sqcup y$, $\neg_1x=e(\bar{\neg} r(x))=\bar{\neg}(x\sqcap x)=\bar{\neg}x$, and $\lrcorner_1 x=e'(\lrcorner r'(x))=\lrcorner(x\sqcup x)=\lrcorner x$. Therefore, \textbf{D} is indeed constructed by the two underlying Boolean algebras.
\end{proof}
Finally,  we note that what has been done with (logical) semiconcepts and protoconcepts here can carry over to object and property oriented ones.

\section{Possible extensions}\label{sec6}
In this section, we discuss two possible ways to extend two-sorted modal logics considered so far. These extensions can express more fine-grained quantitative information in context-based models, including numbers and proportions of worlds satisfying a formula.  This is motivated by the data mining application in our running example. In such application contexts, support and confidence are two important parameters to validate the induced rules\footnote{In data mining, the support of a rule $\varphi\rightarrow\psi$ is the number of instances satisfying both $\varphi$ and $\psi$, and its confidence is the proportion of instances satisfying  both $\varphi$ and  $\psi$ to those satisfying  $\varphi$.}, which indeed correspond to the two kinds of quantitative information to be modeled in the extended logics.  However, as these extensions are yet at the early stage of development, we will only present some preliminary facts about them from a semantic perspective and leave their complete axiomatization for the future work. 

\subsection{Two-sorted graded modal logic}\label{TSGML}
The first extension is two-sorted graded modal logic. Traditionally, graded modalities have been applied to counting the number of accessible possible worlds~\cite{BalestriniF99,Caro88,Cerrato90,Cerrato94,DeCaro,Fattorosi-Barnaba88,Fattorosi-BarnabaG95}. Hence, our extension is aimed at the representation and reasoning about  quantitative knowledge induced from formal contexts.  

The alphabet of the extended language consists of a two-sorted family $P:=\{P_{s_{1}}, P_{s_{2}}\}$ of propositional variables,  the propositional connectives $\vee$, $\wedge$ and $\neg$, and the modal operators $\gboxg{n}$, $\gboxng{n}$,  $\gboxm{n}$, $\gboxnm{n}$,  for $n\in \mathbb{N}$. The formula is the family $Fm(\textbf{BM})^{gd}:=\{Fm(\textbf{BM})^{gd}_{s_{1}}, Fm(\textbf{BM})^{gd}_{s_{2}}\}$ where, 
\[\varphi_{s_{1}}:=p_{s_{1}}\mid \varphi_{s_{1}}\wedge\varphi_{s_{1}}\mid \varphi_{s_{1}}\vee\varphi_{s_{1}}\mid \neg \varphi_{s_{1}}\mid \gboxm{n}\varphi_{s_2}\mid \gboxnm{n}\varphi_{s_2}\] 
\[\varphi_{s_{2}}:=p_{s_{2}}\mid \varphi_{s_{2}}\wedge\varphi_{s_{2}}\mid \varphi_{s_{2}}\vee\varphi_{s_{2}}\mid \neg \varphi_{s_{2}}\mid \gboxg{n}\varphi_{s_1}\mid \gboxng{n}\varphi_{s_1}.\]

For a context-based  model $\mathfrak{M}:=(G, M,  I, v)$, and given $g\in G$ and $m\in M$, recalling the definition of right and left neighborhoods $I_{g\sbullet}:=\{m\in M\mid Igm\}$ and $I_{\sbullet m}:=\{g\in G\mid Igm\}$, we further define  $I_{g\sbullet}(\psi):=\{m\in I_{g\sbullet}\mid m\models_{s_{2}} \psi\}$ and $I_{\sbullet m}(\varphi):=\{g\in I_{\sbullet m}\mid g\models_{s_{1}} \varphi\}$ for any formulas $\varphi\in Fm(\textbf{BM})_{s_{1}}^{gd}$ and $\psi\in Fm(\textbf{BM})_{s_{2}}^{gd}$. Also, we define the same notations for the complemented relation $\overline{I}$.
%Now we consider another type of sets $I_{\sbullet m}(\varphi_{s_{1}}):=\{g\in I_{\sbullet m}~:~ g\cancel{\models}\varphi_{s_{1}}\}$  and  $\overline{I}_{g}(\varphi_{s_{2}}):=\{m\in I_{g\sbullet}~:~ m\cancel{\models}\varphi_{s_{1}}\}$ $I^{*}_{m}(\varphi_{s_{1}}):=\{ g\models\varphi_{s_{1}}~:~g\in I_{\sbullet m}\}$  and  $I^{*}_{g}(\varphi_{s_{2}}):=\{ m\models\varphi_{s_{2}}~:~ m\in I_{g\sbullet}\}$.
The  satisfaction of a formula is then defined as follows 

\begin{definition}
\label{satisfictiongraded}
    {\rm Let $\mathfrak{M}:=(G, M, I, v)$  be a model, then for $m\in M$ and $g\in G$, 
    \bla
        \item $\mathfrak{M},m\models_{s_{2}} \gboxg{n}\varphi$ iff $|I_{\sbullet m}(\neg\varphi)|\leq n$,
        \item $\mathfrak{M},m\models_{s_{2}} \gboxng{n}\varphi$ iff  $|\overline{I}_{\sbullet m}(\varphi)|\leq n$,
        \item $\mathfrak{M},g\models_{s_{1}} \gboxm{n}\psi$ iff $|I_{g\sbullet}(\neg\psi)|\leq n$, 
        \item  $\mathfrak{M},g\models_{s_{1}} \gboxnm{n}\psi$ iff $|\overline{I}_{g\sbullet}(\psi)|\leq n$.
    \ela}
\end{definition}
Intuitively, a property $m$ satisfies the formula $\gboxg{n}\varphi$ if the number of objects with the property but not satisfying $\varphi$ is at most $n$ and it satisfies the formula $\gboxng{n}\varphi$ if the number of objects without the property but satisfying $\varphi$ is at most $n$. The following example shows a concrete interpretation of the graded modalities in the application context of our running example.

\begin{example}
\label{graderexample}
    {\rm Recalling the context of transaction data $(G,M,I)$ in Example \ref{example1},  where $G$ and $M$ are sets of customers and product items, respectively and for $g\in G$ and $m\in M$, $gIm$ means that the customer $g$ has bought the item $m$, let $\mathfrak{M}=(G,M, I, v)$ be a model based on the context and let $\varphi\in Fm(\textbf{BM})^{gd}_{s_{1}}$ and $\psi\in Fm(\textbf{BM})^{gd}_{s_{2}}$ remain the same as in Example \ref{example1}. Then,  the interpretation of some graded modal formulas in $\mathfrak{M}$ are as follows.
   \bla
			\item For $g\in G$, $\mathfrak{M}, g\models_{s_{1}} \gboxm{n}\psi$ means that the number of non-electronic products bought by $g$  is no more than $n$. When $n$ is a small number (e.g. $n=1$), this may mean that $g$ rarely bought non-electronic products. However, the interpretation is sometimes misleading. For example, if the customer $g$ has bought only one item in the transaction data, then we can also say that he bought non-electronic products only. The two readings of the formula  $\gboxm{1}\psi$ seem in conflict with each other. To  resolve this, we will need to use the weighted modalities to be introduced in next subsection.
		\item For $m\in M$, $\mathfrak{M}, m\models_{s_{2}} \gboxg{n}\varphi$ means that the number of customers buying $m$ and outside the 30-50 age group is no more than $n$.
       \item For $g\in G$, $\mathfrak{M}, g\models_{s_{1}} \gboxnm{n}\psi$ means that  the number of electronic products not bought by $g$  is no more  than $n$. 
        \item For $m\in M$, $\mathfrak{M}, m\models_{s_{2}} \gboxng{n}\varphi$  means that  the number of customers in the 30-50 age group not buying $m$ is no more than $n$. When $n$ is small, this may mean that $m$ is a popular product among customers in this age group. Of course, the interpretation makes sense only when the data set contains enough number of customers in the group. 
   \ela }
\end{example}

We can also define several derived modalities $\ogboxg{n}\varphi:=\gboxng{n}\neg\varphi$, $\ogboxng{n}\varphi:=\gboxg{n}\neg\varphi$, $\ogboxm{n}\psi:=\gboxnm{n}\neg\psi$, $\ogboxnm{n}\psi:=\gboxm{n}\neg\psi$, and their semantics follows from the definition as follows immediately.
\begin{proposition}
    {\rm For a model $\mathfrak{M}:=(G, M, I, v)$, $g\in G$ and $m\in M$, we have the following.
    \bla
    \item   $\mathfrak{M},m\models_{s_{2}} \ogboxg{n}\varphi$ iff $|\overline{I}_{\sbullet m}(\neg\varphi)|\leq n$ and $\mathfrak{M},m\models \ogboxng{n}\varphi$ iff  $|I_{\sbullet m}(\varphi)|\leq n$.
     \item   $\mathfrak{M},g\models_{s_{1}} \ogboxm{n}\psi$ iff $|\overline{I}_{g\sbullet}(\neg\psi)|\leq n$ and $\mathfrak{M},g\models \ogboxnm{n}\psi$ iff $|I_{g\sbullet}(\psi)|\leq n$.
    \ela}
\end{proposition}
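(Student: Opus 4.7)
The plan is to unfold each derived modality according to its abbreviation and then apply the corresponding semantic clause of Definition~\ref{satisfictiongraded}. No induction or frame-theoretic argument is needed; the proof is essentially four instances of the same pattern.

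First I would handle part (a). For $\ogboxg{n}\varphi$, which by definition abbreviates $\gboxng{n}\neg\varphi$, clause (b) of Definition~\ref{satisfictiongraded} applied with $\neg\varphi$ in place of $\varphi$ immediately gives $\mathfrak{M},m\models_{s_{2}}\ogboxg{n}\varphi$ iff $|\overline{I}_{\sbullet m}(\neg\varphi)|\leq n$. For $\ogboxng{n}\varphi$, which abbreviates $\gboxg{n}\neg\varphi$, clause (a) of Definition~\ref{satisfictiongraded} produces $|I_{\sbullet m}(\neg\neg\varphi)|\leq n$, and one then invokes the small auxiliary observation that $I_{\sbullet m}(\neg\neg\varphi)=I_{\sbullet m}(\varphi)$. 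This equality follows pointwise from the classical negation clause: for any $g\in G$, $\mathfrak{M},g\models_{s_{1}}\neg\neg\varphi$ iff $\mathfrak{M},g\models_{s_{1}}\varphi$, so the two subsets of $I_{\sbullet m}$ coincide.

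Part (b) is symmetric. For $\ogboxm{n}\psi=\gboxnm{n}\neg\psi$, clause (d) of Definition~\ref{satisfictiongraded} gives $|\overline{I}_{g\sbullet}(\neg\psi)|\leq n$; for $\ogboxnm{n}\psi=\gboxm{n}\neg\psi$, clause (c) gives $|I_{g\sbullet}(\neg\neg\psi)|\leq n$, and the same double-negation lemma (now for the right neighborhood, with the obvious identical proof) reduces this to $|I_{g\sbullet}(\psi)|\leq n$.

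There is no real obstacle: the statement is a sanity check that the dual shorthand modalities $\ogboxg{n}$, $\ogboxng{n}$, $\ogboxm{n}$, $\ogboxnm{n}$ count what one would expect by symmetry with their primitive counterparts, and the single non-definitional ingredient is the trivial semantic fact that $\neg\neg\chi$ and $\chi$ have the same extension in any context-based model. I would therefore keep the presentation short, stating and using the double-negation equality once and then treating the remaining three items as immediate by the same unfolding.
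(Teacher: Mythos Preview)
Your proposal is correct and follows exactly the approach the paper intends: the paper itself gives no explicit proof, stating only that the semantics ``follows from the definition \ldots\ immediately,'' and your unfolding of each abbreviation together with the double-negation observation is precisely that immediate verification.
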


\begin{example}
\label{exm4}
	{\rm Continuing with Example \ref{graderexample}, let $\varphi\in Fm(\textbf{BM})^{gd}_{s_{1}}$ and $\psi\in Fm(\textbf{BM})^{gd}_{s_{2}}$ remain unchanged. Then, the intuitive meanings of  the formulas $\ogboxg{n}\varphi$, $\ogboxng{n}\varphi$ $\ogboxm{n}\psi$, $\ogboxnm{n}\psi$ are as follows:
		\bla
			\item For $g\in G$, $\mathfrak{M}, g\models_{s_{1}} \ogboxm{n}\psi$ means that the number of non-electronic products not bought by $g$ is at most $n$.
			\item For $m\in M$, $\mathfrak{M}, m\models_{s_{2}} \ogboxg{n}\varphi$, the number of  customers  not in the 30 to 50 age group and not  buy $m$ is less than $n$.

          \item For $g\in G$, $\mathfrak{M}, g\models_{s_{1}} \ogboxnm{n}\psi$,  $g$ the number of electronic products bought by $g$ is less than $n$.
          
			\item For $m\in M$, $\mathfrak{M}, m\models_{s_{2}} \ogboxng{n}\psi$, the number of  customers   in the 30 to 50 age group and   buy $m$ is less than $n$.
		\ela}		
	\end{example}
Now we define duals of graded modal operators. $\gdiamondg{n}\varphi:=\neg\gboxg{n}\neg \varphi$, $\nabla_\mathfrak{o}^{n}\varphi:=\neg\gboxng{n}\neg \varphi$, $\gdiamondm{n}\psi:=\neg\gboxm{n}\neg \psi$ and $\nabla_\mathfrak{p}^{n}\psi:=\neg\gboxnm{n}\neg \psi$
Then, the proposition below follows immediately from the definition.
\begin{proposition}
    {\rm  For a model $\mathfrak{M}:=(G, M,I, v)$, $g\in G$ and $m\in M$, we have the following results.
    
    \bla
    \item   $\mathfrak{M},m\models_{s_{2}} \gdiamondg{n}\varphi$ iff $|I_{\sbullet m}(\varphi)|> n$ and $\mathfrak{M},m\models_{s_{2}} \nabla_\mathfrak{o}^{n}\varphi$ iff  $|\overline{I}_{\sbullet m}(\neg\varphi)|> n$.

     \item   $\mathfrak{M},g\models_{s_{1}} \gdiamondm{n}\psi$ iff $|I_{g\sbullet}(\psi)|> n$ and $\mathfrak{M},g\models_{s_{1}} \nabla_\mathfrak{p}^{n}\psi$ iff $|\overline{I}_{g\sbullet}(\neg\psi)|> n$.
    \ela}
\end{proposition}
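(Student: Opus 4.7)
The plan is to verify each of the four equivalences by directly unfolding the definition of the dual operator and then applying Definition \ref{satisfictiongraded}, together with classical propositional reasoning (double negation and contraposition of the inequality $\leq n$). Since the four cases are completely symmetric, I will spell out the strategy for one and indicate that the others are obtained by the same template.

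Consider the first claim in part (a): $\mathfrak{M},m\models_{s_{2}} \gdiamondg{n}\varphi$ iff $|I_{\sbullet m}(\varphi)|> n$. By the definition $\gdiamondg{n}\varphi:=\neg\gboxg{n}\neg\varphi$, we have $\mathfrak{M},m\models_{s_{2}}\gdiamondg{n}\varphi$ iff $\mathfrak{M},m\not\models_{s_2}\gboxg{n}\neg\varphi$. Applying Definition \ref{satisfictiongraded}(a) to the formula $\neg\varphi$, the latter condition becomes $|I_{\sbullet m}(\neg\neg\varphi)|\not\leq n$, i.e., $|I_{\sbullet m}(\neg\neg\varphi)|> n$. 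Since $\mathfrak{M},g\models_{s_1}\neg\neg\varphi$ iff $\mathfrak{M},g\models_{s_1}\varphi$ by the clauses for $\neg$, we have $I_{\sbullet m}(\neg\neg\varphi)=I_{\sbullet m}(\varphi)$, and the desired equivalence follows.

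For the second claim of (a), I would repeat the argument using $\nabla_\mathfrak{o}^{n}\varphi:=\neg\gboxng{n}\neg\varphi$ together with Definition \ref{satisfictiongraded}(b), which gives $\mathfrak{M},m\models_{s_2}\gboxng{n}\neg\varphi$ iff $|\overline{I}_{\sbullet m}(\neg\varphi)|\leq n$; negating this produces exactly the stated inequality. For part (b), the two equivalences are handled identically, swapping the roles of $s_1$ and $s_2$ and using the right-neighborhood sets $I_{g\sbullet}$ and $\overline{I}_{g\sbullet}$ and clauses (c) and (d) of Definition \ref{satisfictiongraded}.

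Since the whole argument is just the chain ``definition of dual $\Rightarrow$ semantics of graded box $\Rightarrow$ double-negation elimination inside the neighborhood set $\Rightarrow$ flipping $\leq n$ to $> n$,'' there is no genuine obstacle; the only point requiring attention is making sure that the double negation is handled on the correct side (inside or outside the set), which differs between the plain and the overlined operators. I would therefore present the proof as a four-line unfolding for each of the four cases, with the observation above about $I_{\sbullet m}(\neg\neg\varphi)=I_{\sbullet m}(\varphi)$ invoked explicitly only where it is needed.
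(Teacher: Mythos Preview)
Your proposal is correct and matches the paper's approach: the paper simply states that the proposition ``follows immediately from the definition,'' and your unfolding of the dual operators via Definition~\ref{satisfictiongraded} together with double-negation elimination is exactly what that remark amounts to.
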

From the proposition, we can see that the diamond-style modalities have a more natural reading than their duals. For example, $\mathfrak{M},m\models_{s_{2}} \gdiamondg{n}\varphi$ means that the number of objects having property $m$ and satisfying $\varphi$ is greater than $n$. This is precisely to say that the support of the rule `` if an object has property $m$, then it satisfies $\varphi$'' is above the threshold $n$. Indeed, in most existing work on graded modal logic~\cite{BalestriniF99,Caro88,Cerrato90,Cerrato94,DeCaro,Fattorosi-Barnaba88,Fattorosi-BarnabaG95}, $\Diamond^n$ is usually taken as primitive operators, whereas $\Box^n$ is defined by duality. This is why the semantics of $\Box^n\varphi$ is defined indirectly with the number of worlds not satisfying $\varphi$. Despite this, the box-style graded modal formulas can still represent some useful quantitative  information of formal contexts as shown in Example~\ref{graderexample}. Hence, to keep uniform with the presentation of our main logic {\bf BM}, we choose the box-style  modalities as primitive operators of its graded extension and simply point out that their dual diamond-style modalities can better express supports of rules induced from the data mining context.

We can also define dual modalities of $\ogboxg{n}$, $\ogboxng{n}$,  $\ogboxm{n}$, $\ogboxnm{n}$ in the same way. Furthermore, with graded modalities, we can define modal operators that represent the exact number of objects/properties satisfying a formula $\varphi$ as follows: $\gdiamondg{n}!\varphi:=\gdiamondg{n-1}\varphi\wedge\neg\gdiamondg{n}\varphi$, $\gdiamondm{n}!\psi:=\gdiamondm{n-1}\psi\wedge\neg\gdiamondm{n}\psi$, $\overline{\Diamond}_\mathfrak{o}^{n}!\varphi:=\overline{\Diamond}_\mathfrak{o}^{n-1}\varphi\wedge\neg\overline{\Diamond}_\mathfrak{o}^{n}\varphi$ and $\overline{\Diamond}_\mathfrak{p}^{n}!\psi:=\overline{\Diamond}_\mathfrak{p}^{n-1}\psi\wedge\neg\overline{\Diamond}_\mathfrak{p}^{n}\psi$

Next, we show that the language has the expressive power to specify properties of the binary relation and its complement in a context. 
\begin{theorem}
    {\rm  Let $\mathfrak{M}:=(G, M,I, v)$ be a model. Then the following results hold.
    \bla
    \item $I$ and $\overline{I}$ are partial functions iff $\mathfrak{M}\models_{s_{1}}\neg\gdiamondm{1}\top_{s_2}$ and $\mathfrak{M}\models_{s_{1}}\neg\ogdiamondm{1}\top_{s_2}$,  respectively.
     \item $I$ and $\overline{I}$ are functions iff $\mathfrak{M}\models_{s_{1}}\gdiamondm{1}!\top_{s_2}$ and $\mathfrak{M}\models_{s_{1}}\ogdiamondm{1}!\top_{s_2}$, respectively.
     \item $I$ is an injective function  iff $\mathfrak{M}\models_{s_{1}}\gdiamondm{1}!\top_{s_2}$ and $\mathfrak{M}\models_{s_{2}} \neg\gdiamondg{1}\top_{s_1}$.
\item $\overline{I}$ is an injective function  iff $\mathfrak{M}\models_{s_{1}}\ogdiamondm{1}!\top_{s_2}$ and  $\mathfrak{M}\models_{s_{2}} \neg\ogdiamondg{1}\top_{s_1}$.
 \item $I$ is a surjective function iff $\mathfrak{M}\models_{s_{1}}\gdiamondm{1}!\top_{s_2}$ and $\mathfrak{M}\models_{s_2} \gdiamondg{0}\top_{s_1}$.
\item $\overline{I}$ is a surjective function iff $\mathfrak{M}\models_{s_{1}}\ogdiamondm{1}!\top_{s_2}$ and $\mathfrak{M}\models_{s_2} \ogdiamondg{0}\top_{s_1}$.
\item $I$ is a bijective function iff $\mathfrak{M}\models_{s_{2}}\gdiamondg{1}!\top_{s_1}$ and $\mathfrak{M}\models_{s_{1}} \gdiamondm{1}!\top_{s_2}$.
 \item $\overline{I}$ is a bijective function iff $\mathfrak{M}\models_{s_{2}}\ogdiamondg{1}!\top_{s_1}$ and $\mathfrak{M}\models_{s_{1}} \ogdiamondm{1}!\top_{s_2}$.
    \ela}
\end{theorem}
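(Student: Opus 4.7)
The plan is to treat each of the eight parts as a direct semantic translation: in every case, after unfolding the definition of $\gdiamondg{n}, \gdiamondm{n}, \ogdiamondg{n}, \ogdiamondm{n}$ via their dual definitions and the satisfaction clauses in Definition~\ref{satisfictiongraded} (together with the Proposition giving the semantics of the overlined operators), the truth of the claimed formula at every world of the appropriate sort will be exactly the required cardinality bound on a neighborhood $I_{g\sbullet}$, $I_{\sbullet m}$, $\overline{I}_{g\sbullet}$, or $\overline{I}_{\sbullet m}$.

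First, I would record the single computation that drives everything. For $\gdiamondm{n}\top_{s_2}$, using $\gdiamondm{n}\psi := \neg\gboxm{n}\neg\psi$ and Definition~\ref{satisfictiongraded}(c), we get $\mathfrak{M},g\models_{s_1}\gdiamondm{n}\top_{s_2}$ iff $|I_{g\sbullet}(\top_{s_2})|>n$; since $\top_{s_2}$ is satisfied by every $m\in M$, this reduces to $|I_{g\sbullet}|>n$, i.e., the out-degree of $g$ in $I$ exceeds $n$. Completely analogous unfoldings give that $\gdiamondg{n}\top_{s_1}$ counts the in-degree $|I_{\sbullet m}|$, and the overlined operators count the corresponding neighborhoods of $\overline{I}$.

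Given this, each part reduces to matching a cardinality inequality to a relational property. For (a), $\neg\gdiamondm{1}\top_{s_2}$ holding at every $g$ says $|I_{g\sbullet}|\leq 1$ for all $g$, which is exactly the partial-function condition on $I$; the $\overline{I}$ case is identical via $\ogdiamondm{1}$. For (b), the derived operator unfolds as $\gdiamondm{1}!\top_{s_2}\equiv \gdiamondm{0}\top_{s_2}\wedge\neg\gdiamondm{1}\top_{s_2}$, which says $0<|I_{g\sbullet}|$ and $|I_{g\sbullet}|\leq 1$, i.e., $|I_{g\sbullet}|=1$, combining totality with partial-functionality. Parts (c)--(h) then follow by conjoining one of these ``out-degree'' conditions on the side of $G$ (functionality of $I$ or $\overline{I}$) with a dual ``in-degree'' condition on the side of $M$: $\neg\gdiamondg{1}\top_{s_1}$ encodes injectivity (at most one preimage per $m$), $\gdiamondg{0}\top_{s_1}$ encodes surjectivity (at least one preimage per $m$), and $\gdiamondg{1}!\top_{s_1}$ encodes exact in-degree one, which together with $\gdiamondm{1}!\top_{s_2}$ gives bijectivity.

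The only real bookkeeping obstacle is being careful with the derived ``$!$'' operator and with the overlined duals: one has to verify once and for all that $\gdiamondm{n}!$ genuinely captures ``exactly $n$'' rather than only a bound, and that $\ogdiamondm{n}$ and $\ogdiamondg{n}$, defined in the same way as their non-overlined counterparts, obey the same cardinality schema but with $I$ replaced by $\overline{I}$, using the Proposition preceding the theorem. Once these verifications are in place, the entire theorem is a transparent chain of iff-sentences between a Boolean combination of cardinality conditions on neighborhoods and the standard set-theoretic definitions of partial function, function, injection, surjection, and bijection.
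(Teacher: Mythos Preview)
Your proposal is correct and follows essentially the same approach as the paper's own proof: both unfold the graded diamond semantics to reduce each formula with $\top$ to a cardinality bound on the relevant neighborhood, and then match these bounds to the standard set-theoretic definitions of (partial) function, injection, surjection, and bijection. Your presentation is slightly more systematic in isolating the ``single computation that drives everything,'' but the content and structure of the argument are the same.
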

\begin{proof}
    Let $\mathfrak{M}:=(G, M,  I, v)$ be a model. Then, 
    \bla
        \item $I$ is a partial function iff for all $g\in G$, $I_{g\sbullet}=\emptyset$ or $I_{g\sbullet}$ is a singleton, that is, $|I_{g\sbullet}(\top_{s_2})|\leq 1$  which is equivalent to $\mathfrak{M}, g\models\neg\gdiamondm{1}\top_{s_2}$.  Analogously,  we can prove the case for $\overline{I}$.
         \item   $I$ is  function iff all $g\in G$,  $I_{g\sbullet}$ is a singleton which is equivalent to $|I_{g\sbullet}(\top_{s_2})|=1$. That is, $\mathfrak{M}\models \gdiamondm{1}!\top_{s_2}$. The proof for the case of $\overline{I}$ is similar.
        \item  $I$ is an injective function iff for all $g\in G$, $I_{g\sbullet}$ is a singleton and for all $m\in M$, $|I_{\sbullet m}|\leq 1$. This is indeed equivalent to  $\mathfrak{M}\models_{s_{1}}\gdiamondm{1}!\top_{s_2}$ and $\mathfrak{M}\models_{s_{2}} \neg\gdiamondg{1}\top_{s_1}$.
        \item The proof is similar to that for (c).
        \item $I$ is a surjective function iff it is a function and for all $m\in M$, $I_{\sbullet m}\not= \emptyset$. The former is equivalent to $\mathfrak{M}\models \diamondm_{1}!\top_{s_2}$ by (b) and the latter is equivalent to $|I_{\sbullet m}(\top_{s_1})|>0$, which is exactly the truth condition of $\mathfrak{M}\models_{s_2} \gdiamondg{0}\top_{s_1}$.
        \item The proof is similar to that for (e).
        \item $I$ is a bijective function iff for any $g\in G$ and $m\in M$, both $I_{g\sbullet}$ and $I_{\sbullet m}$ are singletons. This is equivalent to $|I_{g\sbullet}(\top_{s_2})|=1$ and $|I_{\sbullet m}(\top_{s_1})|=1$, which are exactly truth conditions of  $\mathfrak{M}\models_{s_{1}} \gdiamondm{1}!\top_{s_2}$ and $\mathfrak{M}\models_{s_{2}}\gdiamondg{1}!\top_{s_1}$, respectively.
     \item The proof is similar to that for (g).
     \ela
\end{proof}
 
\subsection{Two sorted weighted modal logic}
\label{TSWML}
In this section, we extend the two-sorted Boolean modal logic to the weighted framework~\cite{LEGASTELOIS2017341}. For a two-sorted family $P:=\{P_{s_{1}}, P_{s_{2}}\}$ of propositional variables, we consider a two-sorted family of formulas $Fm(\textbf{BM})^{wt}:=\{Fm(\textbf{BM})^{wt}_{s_{1}}, Fm(\textbf{BM})^{wt}_{s_{2}}\}$ as follows: 
\[\varphi_{s_{1}}:=p_{s_{1}}\mid \varphi_{s_{1}}\wedge\varphi_{s_{1}}\mid \varphi_{s_{1}}\vee\varphi_{s_{1}}\mid \neg \varphi_{s_{1}}\mid \gboxm{c}\psi_{s_2}\mid \gboxnm{c}\psi_{s_2}\mid \ogboxm{c}\psi_{s_2}\mid\ogboxnm{c}\psi_{s_2}\] 
\[\psi_{s_{2}}:=p_{s_{2}}\mid \psi_{s_{2}}\wedge\psi_{s_{2}}\mid \psi_{s_{2}}\vee\psi_{s_{2}}\mid \neg \psi_{s_{2}}\mid \gboxg{c}\varphi_{s_1}\mid \gboxng{c}\varphi_{s_1}\mid \ogboxg{c}\varphi_{s_1}\mid\ogboxng{c}\varphi_{s_1},\]
where $c\in [0,1]$ is any rational number.

For any formulas $\varphi\in \textbf{BM})^{wt}_{s_{1}}$ and $\psi\in \textbf{BM})^{wt}_{s_{2}}$, we define the satisfaction of weighted modal formulas as follows:
\begin{definition}
\label{satisfictionwighted}
    {\rm Let $\mathfrak{M}:=(G, M, I, v)$  be a finite model. Then, for $m\in M$ and $g\in G$,
\begin{center}
\begin{minipage}[t]{65mm}
 \begin{inparaitem}
\item[(a)] $\mathfrak{M},m\models_{s_{2}} \gboxg{c}\varphi$ iff $\frac{|I_{\sbullet m}(\varphi)|}{|I_{\sbullet m}|}\geq c$,\\
    \item[(b)] $\mathfrak{M},m\models_{s_{2}} \gboxng{c}\varphi$ iff $\frac{|I_{\sbullet m}(\varphi)|}{|[\!\![\varphi]\!\!]|}\geq c$,\\
    \item[(c)] $\mathfrak{M},m\models_{s_{2}} \ogboxg{c}\varphi$ iff $\frac{|\overline{I}_{\sbullet m}(\varphi)|}{|\overline{I}_{\sbullet m}|}\geq c$,\\
    \item[(d)] $\mathfrak{M},m\models_{s_{2}} \ogboxng{c}\varphi$ iff $\frac{|\overline{I}_{\sbullet m}(\varphi)|}{|[\!\![\varphi]\!\!]|}\geq c$,
\end{inparaitem}   
\end{minipage} 
\begin{minipage}[t]{65mm}
 \begin{inparaitem}
    \item[(e)] $\mathfrak{M},g\models_{s_{1}} \gboxm{c}\psi$ iff $\frac{|I_{g\sbullet}(\psi)|}{|I_{g\sbullet}|}\geq c$,\\
    \item[(f)]  $\mathfrak{M},g\models_{s_{1}} \gboxnm{c}\psi$ iff $\frac{|I_{g\sbullet}(\psi)|}{|[\!\![\psi]\!\!]|}\geq c$,\\
    \item[(g)] $\mathfrak{M},g\models_{s_{1}} \ogboxm{c}\psi$ iff $\frac{|\overline{I}_{g\sbullet}(\psi)|}{|\overline{I}_{g\sbullet}|}\geq c$, \\ 
    \item[(h)]  $\mathfrak{M},g\models_{s_{1}} \ogboxnm{c}\psi$ iff $\frac{|\overline{I}_{g\sbullet}(\psi)|}{|[\!\![\psi]\!\!]|}\geq c$.
\end{inparaitem}   
\end{minipage}
\end{center}}
\end{definition}
Note that graded and weighted modalities have close relationship. For instance, suppose $|I_{\sbullet m}|=k\geq n$, we have $\mathfrak{M},m\models_{s_{2}} \gboxg{n}\varphi$ in graded logic iff $|I_{\sbullet m}(\neg\varphi)|\leq n$ iff $\frac{|I_{\sbullet m}(\varphi)|}{|I_{\sbullet m}|}\geq 1-n/k$ iff $\mathfrak{M},m\models_{s_{2}} \gboxg{(1-n/k)}\varphi$ in weighted logic. However, these two kinds of modalities are still not equivalent because, to interpret weighted modalities, we have to assume that the model is finite (or at least each object is connected to finite number of attributes and vice versa), but there is no such restriction on the interpretation of graded modalities. In addition, the above-mentioned mutual derivation of graded and weighted modal formulas is possible only when the cardinality of left or right neighborhoods (i.e. $k$ above) is available. However, we do not always have (or need) such information when interpreting graded modalities.

Unlike the semantics of graded modalities, the box-style weighted modalities are interpreted with worlds satisfying the formula directly. This also complies with semantics of the existing weighted logic~\cite{LEGASTELOIS2017341}. As a consequence, we can express important quantitative information such as confidence or accuracy of rules induced from the context with primitive modalities of the weighted extension. For example, the satisfaction of  $\gboxg{c}\varphi$ in $m$ intuitively means that at least $c\cdot 100\%$ of objects with property $m$ satisfy $\varphi$. In other words, this means that the confidence of the rule ``if an object has property $m$, then it satisfies $\varphi$'' reaches the threshold $c$. The following example further shows that weighted modal formulas can represent a variety of quantitative information in a formal context.

\begin{example}
{\rm Continuing with our running example, let $\mathfrak{M}=(G,M, I, v)$ be a finite model based on the formal context and let $\varphi\in Fm(\textbf{BM})^{wt}_{s_{1}}$ and   $\psi\in Fm(\textbf{BM})^{wt}_{s_{2}}$ remain the same as in Example \ref{example1}. Then,  the weighted modal formulas may be interpreted in $\mathfrak{M}$ as follows.
\bla
\item For $g\in G$, $\mathfrak{M}, g\models_{s_{1}} \gboxm{c}\psi$ means that  at least $c\cdot 100\%$ of items bought by $g$ is electronic products.
\item For $m\in M$, $\mathfrak{M}, m\models_{s_{2}} \gboxg{c}\varphi$  means that at least $c\cdot 100\%$ of customers  buying the item $m$ is in the 30-50 age group.
\item For $g\in G$, $\mathfrak{M}, g\models_{s_{1}} \gboxnm{c}\psi$,  $g$ bought at least  $c\cdot 100\%$ of  electronic product items.
\item For $m\in M$, $\mathfrak{M}, m\models_{s_{2}} \gboxng{c}\varphi$, at least  $c\cdot 100\%$ of customers in  the 30-50 age group  bought the product $m$. 
\item For $g\in G$, $\mathfrak{M}, g\models_{s_{1}} \ogboxm{c}\psi$ means that at least $c\cdot 100\%$ of items {\em not} bought by $g$ is electronic products.
\item For $m\in M$, $\mathfrak{M}, m\models_{s_{2}} \ogboxg{c}\varphi$  means that at least $c\cdot 100\%$ of customers  {\em not} buying the item $m$ is in the 30-50 age group.
\item For $g\in G$, $\mathfrak{M}, g\models_{s_{1}}\ogboxnm{c}\psi$,  $g$ bought at most $(1-c)\cdot 100\%$  of  electronic product items.
\item For $m\in M$, $\mathfrak{M}, m\models_{s_{2}} \ogboxng{c}\varphi$, at least $c\cdot 100\%$ of customers in the 30-50 age group does not buy the item $m$.
		\ela
When the formal context contains a large amount of transaction data so that the samples are representative enough, we can given the weights a probabilistic interpretation. For example, (a) may be interpreted as
follows: the conditional probability of an item being an electronic product given that $g$ bought it is at least  $c$. }
\end{example}

We show that the two-sorted weighted modal logic generalize {\bf BM} by defining an embedding translation $\tau=\{\tau_1,\tau_2\}:Fm(\textbf{BM})\rightarrow Fm(\textbf{BM})^{wt}$ as follows.
\begin{itemize}
    \item $\tau_1(p_{s_1})=p_{s_1}$ and $\tau_2(p_{s_2})=p_{s_2}$.
    \item $\tau$ is homomorphic with respect to propositional connectives.
    \item $\tau_1({\sf X}_\mathfrak{p}\psi)={\sf X}_\mathfrak{p}^1\tau_2(\psi)$ for ${\sf X}=\Box,\boxminus,\overline{\Box},
   \overline{\boxminus}$.
   \item $\tau_2({\sf X}_\mathfrak{o}\varphi)={\sf X}_\mathfrak{o}^1\tau_1(\varphi)$ for ${\sf X}=\Box,\boxminus,\overline{\Box},
   \overline{\boxminus}$.   
\end{itemize}
\begin{proposition}
    {\rm Let $\mathfrak{M}=(G,M,I,v)$ be a finite model and let $\varphi\in  Fm(\textbf{BM})_{s_{1}}$ and $\psi\in  Fm(\textbf{BM})_{s_{2}}$ be two formulas. Then, for any $g\in G$ and $m\in M$
    \bla
    \item $\mathfrak{M},g\models\varphi$ iff  $\mathfrak{M},g\models\tau_1(\varphi)$.
    \item $\mathfrak{M},m\models\psi$ iff  $\mathfrak{M},m\models\tau_2(\psi)$.
  \item $\models{\sf X}^{0}\tau_1(\varphi)$ for ${\sf X}\in \{\boxg,\boxng\}$ and $\models{\sf Y}^{0}\tau_2(\psi)$ for ${\sf Y}\in\{\boxm, \boxnm\}$
    \ela}
\end{proposition}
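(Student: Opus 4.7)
The plan is to prove parts (a) and (b) by a simultaneous structural induction on $\varphi$ and $\psi$, and then to obtain (c) by direct inspection of the weighted semantics in Definition~\ref{satisfictionwighted}.

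The base cases reduce to $\tau$ being the identity on propositional letters, with the valuation $v$ unchanged; the Boolean cases $(\neg,\wedge,\vee)$ use the homomorphic clauses of $\tau$ together with the inductive hypotheses. The only real work is in the eight modal cases. As a representative, for $\varphi=\boxm\psi$ the \textbf{BM} semantics inherited from \textbf{KB} gives $\mathfrak{M},g\models_{s_{1}}\boxm\psi$ iff $I_{g\sbullet}(\psi)=I_{g\sbullet}$, which by the inductive hypothesis on $\psi$ is equivalent to $I_{g\sbullet}(\tau_{2}(\psi))=I_{g\sbullet}$, i.e.\ $|I_{g\sbullet}(\tau_{2}(\psi))|/|I_{g\sbullet}|=1\geq 1$, which is exactly $\mathfrak{M},g\models_{s_{1}}\gboxm{1}\tau_{2}(\psi)=\tau_{1}(\boxm\psi)$. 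For $\boxng$ and $\boxnm$ I would use that in a context-based model $J=\overline{I}$, so their \textbf{KF}-style semantics rewrites as the inclusions $[\!\![\varphi]\!\!]\subseteq I_{\sbullet m}$ and $[\!\![\psi]\!\!]\subseteq I_{g\sbullet}$, which match the weighted clauses whose denominators are $|[\!\![\tau_{i}(\cdot)]\!\!]|$. The overlined operators $\oboxg,\oboxm,\oboxng,\oboxnm$ are expanded through their \textbf{BM} defining equations and handled by the same scheme using $\overline{I}$-neighbourhoods, matching the $\ogboxg{1},\ogboxm{1},\ogboxng{1},\ogboxnm{1}$ clauses; the four $\mathfrak{o}$-sorted cases are symmetric to the $\mathfrak{p}$-sorted ones with $g$ and $m$ swapped.

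For (c), each clause of Definition~\ref{satisfictionwighted} imposes a non-negativity requirement of the form $|\cdot|/|\cdot|\geq 0$ on a ratio of cardinalities, which holds trivially, so every ${\sf X}^{0}$-formula is satisfied at every world, giving the asserted validity.

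The main obstacle is the boundary case in which the denominator of the weighted ratio is zero: $0/0$ is literally undefined, while the corresponding \textbf{BM} box or window modality is vacuously true. I would fix once and for all the convention $0/0:=1$, so that vacuous satisfaction of a \textbf{BM} box at a world coincides with satisfaction of the weighted box at threshold $1$, and (c) still holds in the boundary case since $1\geq 0$. Without such a convention, the inductive clauses for (a) and (b) would fail at worlds with empty left- or right-neighbourhoods, and (c) would fail for the $\boxng,\boxnm,\oboxng,\oboxnm$-style clauses at formulas whose extent or intent is empty; pinning down this convention is the only delicate point in the argument.
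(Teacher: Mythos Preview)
Your proof is correct and follows the same route as the paper, which simply writes ``It follows from the definitions''; your structural induction is precisely what that one-liner unpacks to. Your explicit treatment of the $0/0$ boundary case is in fact more careful than the paper, which leaves the convention implicit.
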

\begin{proof}
   It follows from the definitions.
\end{proof}

\begin{proposition}
    {\rm Let $\varphi\in  Fm(\textbf{BM})_{s_{1}}^{wt}$ and $\psi\in  Fm(\textbf{BM})_{s_{2}}^{wt} $. Then $\models_{s_{2}} \gboxg{1}\neg \varphi\leftrightarrow\ogboxng{1}\varphi$, $\models_{s_{1}}\gboxm{1}\neg\psi\leftrightarrow\ogboxnm{1}\psi$, $\models_{s_{2}} \ogboxg{1}\neg \varphi\leftrightarrow\gboxng{1}\varphi$, and $\models_{s_{1}}\ogboxm{1}\neg\psi\leftrightarrow\gboxnm{1}\psi$.}
\end{proposition}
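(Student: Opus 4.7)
The plan is to unfold the semantics of weighted modal formulas at the threshold $c=1$ and observe that both sides of each claimed equivalence reduce to the same purely set-theoretic condition. Concretely, for the first claim, fix a finite model $\mathfrak{M}=(G,M,I,v)$ and $m\in M$. By Definition~\ref{satisfictionwighted}(a), $\mathfrak{M},m\models_{s_2}\gboxg{1}\neg\varphi$ amounts to $|I_{\sbullet m}(\neg\varphi)|/|I_{\sbullet m}|\geq 1$; since the ratio is bounded by $1$, this forces $I_{\sbullet m}(\neg\varphi)=I_{\sbullet m}$, i.e.\ every object sharing property $m$ falsifies $\varphi$, i.e.\ $I_{\sbullet m}\cap[\![\varphi]\!]=\emptyset$. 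Symmetrically, by Definition~\ref{satisfictionwighted}(d), $\mathfrak{M},m\models_{s_2}\ogboxng{1}\varphi$ amounts to $|\overline{I}_{\sbullet m}(\varphi)|/|[\![\varphi]\!]|\geq 1$, hence $[\![\varphi]\!]\subseteq\overline{I}_{\sbullet m}$, which is again the condition $I_{\sbullet m}\cap[\![\varphi]\!]=\emptyset$.

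The remaining three equivalences follow by exactly the same bookkeeping, replacing $I$ by $\overline{I}$ or $\varphi$ by $\psi$ and swapping the sort as appropriate. Namely: $\gboxm{1}\neg\psi$ and $\ogboxnm{1}\psi$ both characterize $I_{g\sbullet}\cap[\![\psi]\!]=\emptyset$; $\ogboxg{1}\neg\varphi$ and $\gboxng{1}\varphi$ both characterize $\overline{I}_{\sbullet m}\cap[\![\varphi]\!]=\emptyset$; and $\ogboxm{1}\neg\psi$ and $\gboxnm{1}\psi$ both characterize $\overline{I}_{g\sbullet}\cap[\![\psi]\!]=\emptyset$. In each case, the $\gboxg{}$-style side says ``every $I$-neighbour lies outside $[\![\varphi]\!]$'' while the $\ogboxng{}$-style side says ``every element of $[\![\varphi]\!]$ lies in the complemented neighbourhood'', and these are obviously equivalent.

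The only genuine technical obstacle is the handling of the degenerate cases in which a denominator of the weighted ratio vanishes, i.e.\ when $I_{\sbullet m}=\emptyset$ or $[\![\varphi]\!]=\emptyset$ (and similarly for the other three equivalences). Under the natural convention that $0/0 \geq c$ holds vacuously for all $c\in[0,1]$ (which aligns with reading weighted box-modalities as universal quantifications conditioned on a non-empty neighbourhood), both sides become vacuously true in the degenerate case, and the equivalence persists. Once this convention is fixed, no inductive argument on formula complexity is required: the proof is a direct unfolding of Definition~\ref{satisfictionwighted}. I would therefore state the convention explicitly at the start of the proof and then write out a single chain of iff's for (a), noting that (b)--(d) are obtained by the obvious symmetrical substitutions.
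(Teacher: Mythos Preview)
Your proposal is correct and follows essentially the same approach as the paper: unfold the weighted semantics at $c=1$, observe that the ratio being $\geq 1$ forces the numerator set to equal the denominator set, and translate this into the set-inclusion $I_{\sbullet m}\subseteq[\!\![\neg\varphi]\!\!]$ (equivalently $[\!\![\varphi]\!\!]\subseteq\overline{I}_{\sbullet m}$), which is exactly the chain of iff's the paper writes out. Your explicit treatment of the $0/0$ degenerate cases is a welcome addition that the paper leaves implicit; note however that when only one of the two denominators vanishes, only that side is vacuously true while the other side is true by direct computation (not vacuously), so your phrase ``both sides become vacuously true'' slightly overstates the situation, though the equivalence still holds.
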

\begin{proof}
    Let $\mathfrak{M}:=(G,M,I, v)$ be a finite model and $m\in M$. Then, $\mathfrak{M}, m\models_{s_{2}} \gboxg{1}\neg\varphi$ iff $I_{\sbullet m}=I_{\sbullet m}(\neg\varphi)$ iff $I_{\sbullet m}\subseteq [\!\![\neg \varphi]\!\!]$ iff $[\!\![\varphi]\!\!]\subseteq \overline{I}_{\sbullet m}$ iff.  $\mathfrak{M}, m\models_{s_{2}} \ogboxng{1}\varphi$.
    %As $S$ is complement of $R$,  $[\!\![\neg \varphi]\!\!]^{c}\subseteq \overline{I}_{\sbullet m}$ which implies that $[\!\![ \varphi]\!\!]\subseteq \overline{I}_{\sbullet m}$.  Now $R_{m}=R_{m}(\neg\varphi)$ inplies that $R^{c}_{m}=R^{c}_{m}(\neg\varphi)$ and so $\overline{I}_{\sbullet m}=\overline{I}_{\sbullet m}(\neg\varphi)$. Then $[\!\![ \varphi]\!\!]\subseteq \overline{I}_{\sbullet m}(\neg \varphi)$ which implies that  $\mathfrak{M}, m\models \boxng\varphi$.
   The proof of other cases is similar.
\end{proof}
The preceding two propositions also show that, as in the case of two-sorted Boolean modal logic, several weighted modal operators are definable with each other in the case of extreme weights. However, in the next proposition, we show that such mutual definability does not hold generally. 
\begin{proposition}{\rm
$\gboxng{c}\varphi\leftrightarrow\ogboxg{d}\neg\varphi$, $\gboxnm{c}\psi\leftrightarrow\ogboxm{d}\neg\psi$, $\ogboxng{c}\varphi\leftrightarrow\gboxg{d}\neg\varphi$ and  $\ogboxnm{c}\psi\leftrightarrow\gboxm{d}\neg\psi$ all fails unless $c=d=1$ or $c=d=0$.  } 
\end{proposition}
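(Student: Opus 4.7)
The plan is to exhibit, for each of the four biconditionals in the statement and each $(c,d)\in[0,1]^{2}\setminus\{(0,0),(1,1)\}$, a finite context-based model and a point in it at which the biconditional fails. Validity in the two excluded cases is immediate: $(c,d)=(1,1)$ is exactly the content of the previous proposition, and $(c,d)=(0,0)$ is trivial since both ratios are always nonnegative. Since the four biconditionals have the same structure (swap $\mathfrak{o}/\mathfrak{p}$, swap $I/\overline{I}$), I would treat only the first, $\gboxng{c}\varphi\leftrightarrow\ogboxg{d}\neg\varphi$, in detail and observe that the other three are obtained by dualising the counterexample, either by exchanging the two sorts or by passing to the complemented context.

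The key step is to unfold both sides at a fixed attribute $m$ in terms of four cardinalities: $A=|I_{\sbullet m}\cap[\!\![\varphi]\!\!]|$, $B=|\overline{I}_{\sbullet m}\cap[\!\![\varphi]\!\!]|$, $C=|I_{\sbullet m}\cap[\!\![\neg\varphi]\!\!]|$, $D=|\overline{I}_{\sbullet m}\cap[\!\![\neg\varphi]\!\!]|$. Then $\mathfrak{M},m\models\gboxng{c}\varphi$ reduces to $A/(A+B)\geq c$, while $\mathfrak{M},m\models\ogboxg{d}\neg\varphi$ reduces to $D/(B+D)\geq d$. Because the first ratio normalises over $|[\!\![\varphi]\!\!]|=A+B$ and the second over $|\overline{I}_{\sbullet m}|=B+D$, the two are linked only through the shared parameter $B$ and can therefore be driven independently to prescribed values in $[0,1]$ by varying $A$ and $D$.

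Building on this, I would split by the position of $(c,d)$ in the unit square. For the generic interior case $c,d\in(0,1)$, take $B=1$, $C=D=0$, and let $A$ be any integer with $A/(A+1)\geq c$; the LHS then holds at $m$ while the RHS equals $0<d$, refuting the forward direction. If $c=0$ and $d>0$, the same model with $A=0$ makes the LHS vacuously true while the RHS is $0$. If $c=1$ and $d<1$, take $A=0$, $B=1$, $C=0$, and $D$ large enough that $D/(B+D)\geq d$; then the LHS fails and the RHS holds, refuting the backward direction. The mixed endpoint cases $(c=1,d=0)$ and $(c=0,d=1)$ are immediate from these two patterns. Each construction is realised by a one-attribute context whose objects are partitioned according to $A,B,C,D$, with a valuation making $[\!\![\varphi]\!\!]$ the $\varphi$-block.

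The only mild technical obstacle is to keep the denominators $A+B$ and $B+D$ strictly positive so the ratios are well-defined; all the constructions above choose $B\geq 1$, which uniformly ensures this. The remaining three biconditionals of the statement are obtained from the same counterexamples by swapping the sorts $G$ and $M$ (which dualises $\mathfrak{o}$ and $\mathfrak{p}$), or by swapping the context relation with its complement (which interchanges overlined and unoverlined modalities), so no essentially new argument is required.
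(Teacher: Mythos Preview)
Your approach is essentially identical to the paper's: both set up a $2\times 2$ contingency table at a fixed attribute $m$ (the paper's $k_1,k_2,k_3,k_4$ correspond to your $A,C,B,D$), observe that the two ratios $\frac{A}{A+B}$ and $\frac{D}{B+D}$ share only the parameter $B$ and can therefore be tuned independently, and then do a case split on the boundary positions of $(c,d)$ with the same concrete choices of cardinalities.

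There is one small bookkeeping gap in your case split: you treat $c,d\in(0,1)$, then $c=0$ with $d>0$, then $c=1$ with $d<1$, and finally the corners $(1,0)$ and $(0,1)$---but this leaves out $c\in(0,1)$ with $d\in\{0,1\}$. The paper handles these by remarking that the arguments for $d=0$ and $d=1$ are symmetric to those for $c=0$ and $c=1$. Your own constructions already cover them anyway (the interior construction with $B=1$, $D=0$ works verbatim when $d=1$, and your $c=1$ construction with $A=0$, $B=1$ falsifies the left side for any $c>0$ while the right side is trivially true when $d=0$), so this is a minor omission in the enumeration rather than a real gap in the argument.
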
 
\begin{proof}
 {\rm We show that $\gboxng{c}\varphi\leftrightarrow\ogboxg{d}\neg\varphi$ has a counter-model for  any $c, d\in [0,1]$ except $c=d=1$ or $c=d=0$. To do that, let us consider the following contingency table, where each $k_i (1\leq i\leq 4)$ denotes the number of objects for the corresponding entry. For example, $k_1$ is the cardinality of $I_{\sbullet m}(\varphi)$.
  \begin{table}[h]
		\centering
		\caption {A contingency table}\label{CU}
		\begin{tabular}{|c|c|c|}
				\hline
				&$[\!\![\varphi]\!\!]$ & $[\!\![\neg \varphi]\!\!]$\\ \hline
				
				$I_{\sbullet m}$ & $k_1$ & $k_2$\\
				\hline
				$\overline{I}_{\sbullet m}$ & $k_3$ & $k_4$\\
				\hline
			\end{tabular}
\end{table}
Then, for given $c,d\in[0,1]$ that is not the exceptional case, we can construct a model $(G,M,I,v)$ and find an $m\in M$ to falsify the equivalence.  By definition, 
$\mathfrak{M},m\models \gboxng{c}\varphi$ and $\mathfrak{M},m\models\ogboxg{d}\neg\varphi$  mean  $\frac{|I_{\sbullet m}(\varphi)|}{|[\!\![\varphi]\!\!]|}\geq c$ and $\frac{|\overline{I}_{\sbullet m}(\neg\varphi)|}{|\overline{I}_{\sbullet m}|}\geq d$, respectively. That is, $\frac{k_1}{k_1+k_3}\geq c$ and $\frac{k_4}{k_3+k_4}\geq d$. Thus, it suffices to show that for any non-exceptional $c,d$, we can always choose appropriate $k_i (i=1,3,4)$ to make one of the inequalities true and the other false. We prove it by simple case analysis.
\begin{itemize}
    \item $c=0$: then $\frac{k_1}{k_1+k_3}\geq c$ is trivially true. Hence, as $d>0$, we can set $k_4=1$ and choose $k_3$ large enough such that $\frac{1}{k_3+1}< d$. The proof for the case of $d=0$ is similar.
    \item $c=1$: setting $k_3=1$  falsifies $\frac{k_1}{k_1+k_3}\geq c$  and as $d<1$, we can choose $k_4$ large enough to make $\frac{k_4}{k_4+1}\geq d$. Analogously, we can prove the case of $d=1$.
    \item $c,d\in(0,1)$: by setting $k_3=1$ and $k_4=0$ and choosing $k_1$ large enough to make $\frac{k_1}{k_1+1}\geq c$, we can construct the counter-model accordingly. 
\end{itemize}}
\end{proof}
Next, we define the dual modal operators $\gdiamondg{c}\varphi:= \neg\gboxg{c}\neg \varphi$, $\gdiamondm{c}\psi:= \neg\gboxm{c}\neg \psi$, $\nabla_\mathfrak{o}^{c}\varphi:=\neg\gboxng{c}\neg\varphi$ and $\nabla_\mathfrak{p}^{c}\psi:=\neg\gboxnm{c}\neg\psi$. 

\begin{proposition}
{\rm Let $\mathfrak{M}:=(G, M, I, v)$  be a finite model and let $g\in G$ and $m\in M$. Then for $\varphi\in Fm(\textbf{BM})^{wt}_{s_{1}}$ and $\psi\in Fm(\textbf{BM})^{wt}_{s_{2}}$ the following holds.
    \bla
         \item $\mathfrak{M},m\models_{s_{2}} \gdiamondg{c}\varphi$ iff $\frac{|I_{\sbullet m}(\varphi)|}{|I_{\sbullet m}|}> 1-c$ and   $
         \mathfrak{M},m\models_{s_{2}} \nabla_\mathfrak{o}^{c}\varphi$ iff $\frac{|I_{\sbullet m}(\neg\varphi)|}{|[\!\![\varphi]\!\!]|} < c$.
        \item $\mathfrak{M},g\models_{s_{1}} \gdiamondm{c}\psi$ iff $\frac{|I_{g\sbullet}(\psi)|}{|I_{g\sbullet}|}> 1-c$ and $\mathfrak{M},g\models_{s_{1}} \nabla_\mathfrak{p}^{c}\psi$ iff $\frac{|I_{g\sbullet}(\neg\psi)|}{|[\!\![\psi]\!\!]|}< c$.
         \ela}
   
\end{proposition}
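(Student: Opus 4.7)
The proof is essentially a direct unfolding of definitions combined with the finiteness of the model. The plan is to handle one of the four equivalences in detail and note that the remaining three are completely analogous, since the weighted box and window modalities in the $\mathfrak{o}$- and $\mathfrak{p}$-directions have symmetric semantics.

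First, I would handle the $\gdiamondg{c}$ claim. By the definition of the dual, $\gdiamondg{c}\varphi := \neg\gboxg{c}\neg\varphi$. Applying the Boolean semantics of negation, $\mathfrak{M},m\models_{s_{2}}\gdiamondg{c}\varphi$ iff $\mathfrak{M},m\not\models_{s_{2}}\gboxg{c}\neg\varphi$, and clause (a) of the weighted satisfaction definition unpacks this to $\frac{|I_{\sbullet m}(\neg\varphi)|}{|I_{\sbullet m}|} < c$. Since $\mathfrak{M}$ is finite, $I_{\sbullet m}(\varphi)$ and $I_{\sbullet m}(\neg\varphi)$ are disjoint and together equal $I_{\sbullet m}$, so $|I_{\sbullet m}(\neg\varphi)| = |I_{\sbullet m}| - |I_{\sbullet m}(\varphi)|$. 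Substituting gives $1 - \frac{|I_{\sbullet m}(\varphi)|}{|I_{\sbullet m}|} < c$, i.e.\ $\frac{|I_{\sbullet m}(\varphi)|}{|I_{\sbullet m}|} > 1-c$, as required. (Strictly speaking, this requires $|I_{\sbullet m}|\neq 0$; I would note that this is an implicit well-formedness assumption inherited from the satisfaction clause for $\gboxg{c}$.)

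For the $\nabla_\mathfrak{o}^{c}$ claim I would proceed identically: unfold $\nabla_\mathfrak{o}^{c}\varphi := \neg\gboxng{c}\neg\varphi$, push the negation through to obtain $\mathfrak{M},m\models_{s_{2}}\nabla_\mathfrak{o}^{c}\varphi$ iff $\mathfrak{M},m\not\models_{s_{2}}\gboxng{c}\neg\varphi$, and apply clause (b) of the weighted satisfaction definition. Here the denominator of $\gboxng{c}\chi$ is $|[\![\chi]\!]|$ evaluated at $\chi = \neg\varphi$, giving the condition $\frac{|I_{\sbullet m}(\neg\varphi)|}{|[\![\neg\varphi]\!]|} < c$. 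This is the key step — one should double-check the intended denominator in the statement, since the unfolding from the semantics of $\gboxng{c}$ naturally yields $|[\![\neg\varphi]\!]|$ rather than $|[\![\varphi]\!]|$; if the statement is taken literally the equivalence would not hold in general, so I would read it as the former.

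Part (b) is entirely symmetric: replace $I_{\sbullet m}$ by $I_{g\sbullet}$, $\gboxg{c}$ by $\gboxm{c}$, and $\gboxng{c}$ by $\gboxnm{c}$, and repeat the same two calculations. The main obstacle is thus not mathematical content but bookkeeping — making sure that in each of the four cases the right denominator ($|I_{\sbullet m}|$, $|I_{g\sbullet}|$, $|[\![\chi]\!]|$) is carried through correctly after the negation is pushed inside, and explicitly invoking finiteness to justify $|I_{\sbullet m}(\neg\varphi)| = |I_{\sbullet m}| - |I_{\sbullet m}(\varphi)|$ (which is used only for the $\gdiamondg{c}$ and $\gdiamondm{c}$ parts; the $\nabla$-parts need no such manipulation since the denominator already depends only on the extension of $\neg\varphi$).
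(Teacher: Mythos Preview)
Your approach is exactly what the paper does: its entire proof is the single line ``The proof follows from Definition~\ref{satisfictionwighted},'' i.e.\ unfold the duals and negate the defining inequality. Your observation about the $\nabla_\mathfrak{o}^{c}$ clause is well taken: unfolding $\neg\gboxng{c}\neg\varphi$ via clause~(b) indeed yields denominator $|[\![\neg\varphi]\!]|$, so the $|[\![\varphi]\!]|$ in the stated equivalence appears to be a typo in the paper rather than something you are missing.
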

\begin{proof}
    {\rm The proof follows from Definition \ref{satisfictionwighted}.}
\end{proof}

 A similar result can be proved for the definable modalities $\ogdiamondg{c}\varphi:= \neg\ogboxg{c}\neg \varphi$, $\ogdiamondm{c}\psi:= \neg\ogboxm{c}\neg \varphi$, $\overline{\nabla}_\mathfrak{o}^{c}\varphi:=\neg\ogboxng{c}\neg\varphi$ and $\overline{\nabla}_\mathfrak{p}^{c}\psi:=\neg\ogboxnm{c}\neg\psi$. In addition, we have the following straightforward results for weighted modalities.
\begin{proposition}\label{waightedneedproflattic}
{\rm
For $\varphi\in Fm(\textbf{BM})^{wt}_{s_{1}}$ and  $\psi\in Fm(\textbf{BM})^{wt}_{s_{2}}$, and $c, d\in [0,1]$ such that $c\geq d$,
\bla
         \item $\models_{s_{2}} {\sf X}^{c}\varphi\rightarrow{\sf X}^{d}\varphi$ for ${\sf X}=\boxg,\boxng, \oboxg, \oboxng$. 
          \item  $\models_{s_{1}}{\sf Y}^{c}  \psi\rightarrow{\sf Y}^{d}\psi$ for ${\sf Y}=\boxm,\boxnm, \oboxm, \oboxnm$.  
     \ela}
 \end{proposition}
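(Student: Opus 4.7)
The plan is to give a direct semantic argument, unfolding Definition \ref{satisfictionwighted} and then using transitivity of $\geq$ together with the hypothesis $c\geq d$. Fix an arbitrary finite context-based model $\mathfrak{M}=(G,M,I,v)$. For part (a) I would take an arbitrary $m\in M$; for part (b), an arbitrary $g\in G$. Each of the eight cases is an instance of a single template: the satisfaction of ${\sf X}^{c}\varphi$ (resp.\ ${\sf Y}^{c}\psi$) at the chosen world is equivalent to an inequality of the form $a/b \geq c$, where $a$ and $b$ are cardinalities determined by the relevant neighborhood and the extension of the subformula.

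I would write out one representative case in detail. Take ${\sf X}=\boxg$: assume $\mathfrak{M},m\models_{s_{2}}\gboxg{c}\varphi$, so by Definition \ref{satisfictionwighted}(a), $|I_{\sbullet m}(\varphi)|/|I_{\sbullet m}|\geq c$. Combining this with $c\geq d$ gives $|I_{\sbullet m}(\varphi)|/|I_{\sbullet m}|\geq d$, hence $\mathfrak{M},m\models_{s_{2}}\gboxg{d}\varphi$. As $m$ was arbitrary, the implication is valid. The remaining seven cases (for $\boxng,\oboxg,\oboxng$ in part (a) and $\boxm,\boxnm,\oboxm,\oboxnm$ in part (b)) are structurally identical, differing only in which neighborhood ($I_{\sbullet m},\overline{I}_{\sbullet m},I_{g\sbullet},\overline{I}_{g\sbullet}$) appears in the numerator and which of these, or $|[\![\varphi]\!]|$/$|[\![\psi]\!]|$, appears in the denominator. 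I would therefore simply list the eight ratios from clauses (a)--(h) of Definition \ref{satisfictionwighted} and note that the same one-line argument applies to each.

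The only genuinely subtle point, and the single place that could be called an obstacle, is the handling of degenerate denominators: when $|I_{\sbullet m}|=0$, $|\overline{I}_{\sbullet m}|=0$, $|I_{g\sbullet}|=0$, $|\overline{I}_{g\sbullet}|=0$, $|[\![\varphi]\!]|=0$, or $|[\![\psi]\!]|=0$, the ratios in Definition \ref{satisfictionwighted} are not literally well-defined. I would adopt the natural convention (tacit in the definition) that in such degenerate situations the satisfaction condition is decided independently of the threshold—either it fails for every $c\in[0,1]$ or, under the alternative convention $0/0=1$, it holds for every $c\in[0,1]$. In either case the implication ${\sf X}^{c}\varphi\rightarrow{\sf X}^{d}\varphi$ holds trivially at the world in question, so the monotonicity claim survives intact. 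Beyond fixing this convention, no further work is required.
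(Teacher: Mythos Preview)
Your proposal is correct and matches the paper's treatment: the paper states this proposition as a ``straightforward result'' and gives no proof at all, so your one-line transitivity argument is exactly what is intended. Your discussion of the degenerate-denominator case is more careful than the paper itself, which leaves the convention implicit.
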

Also, the following classical rules are still valid when the weight is equal to 1.

\begin{proposition}
    {\rm For $\varphi_{1}, \varphi_{2}\in Fm(\textbf{BM})^{wt}_{s_{1}}$ and  $\psi_{1}, \psi_{2}\in Fm(\textbf{BM})^{wt}_{s_{2}}$, the following hold.
    \bla
    \item If $\models_{s_{1}} \varphi_{1}\rightarrow \varphi_{2}$ then $\models_{s_{2}} {\sf X}^1\varphi_{2}\rightarrow  {\sf X}^1\varphi_{1}$ for ${\sf X}=\boxng,\oboxng$.
    \item If $\models_{s_{2}} \psi_{1}\rightarrow \psi_{2}$ then $\models_{s_{1}} {\sf Y}^1\psi_{2}\rightarrow {\sf Y}^1\psi_{1}$  for ${\sf Y}=\boxnm,\oboxnm$.
    \ela}
\end{proposition}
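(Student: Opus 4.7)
The plan is to unpack the semantics in Definition~\ref{satisfictionwighted} at the extremal weight $c=1$ and reduce each implication to an inclusion of truth sets. Note that for any formula $\chi$ of sort $s_i$, let $[\!\![\chi]\!\!]$ denote its extension. For the $\boxng$ case, by Definition~\ref{satisfictionwighted}(b), $\mathfrak{M},m\models_{s_2}\gboxng{1}\varphi$ iff $|I_{\sbullet m}(\varphi)|/|[\!\![\varphi]\!\!]|\geq 1$, which (since $I_{\sbullet m}(\varphi)\subseteq[\!\![\varphi]\!\!]$ by definition) is equivalent to the set inclusion $[\!\![\varphi]\!\!]\subseteq I_{\sbullet m}$. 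Analogously, using (d),(f),(h) of Definition~\ref{satisfictionwighted}, the conditions $\mathfrak{M},m\models_{s_2}\ogboxng{1}\varphi$, $\mathfrak{M},g\models_{s_1}\gboxnm{1}\psi$, and $\mathfrak{M},g\models_{s_1}\ogboxnm{1}\psi$ unfold to $[\!\![\varphi]\!\!]\subseteq\overline{I}_{\sbullet m}$, $[\!\![\psi]\!\!]\subseteq I_{g\sbullet}$, and $[\!\![\psi]\!\!]\subseteq\overline{I}_{g\sbullet}$, respectively.

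Given these reformulations, the argument for part (a) with ${\sf X}=\boxng$ runs as follows. Suppose $\models_{s_1}\varphi_1\rightarrow\varphi_2$; then for every finite context-based model $\mathfrak{M}=(G,M,I,v)$ we have $[\!\![\varphi_1]\!\!]\subseteq[\!\![\varphi_2]\!\!]$. Fix any $m\in M$ with $\mathfrak{M},m\models_{s_2}\gboxng{1}\varphi_2$. By the reformulation above, $[\!\![\varphi_2]\!\!]\subseteq I_{\sbullet m}$, so by transitivity of $\subseteq$ we get $[\!\![\varphi_1]\!\!]\subseteq I_{\sbullet m}$, i.e., $\mathfrak{M},m\models_{s_2}\gboxng{1}\varphi_1$. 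The case ${\sf X}=\ogboxng$ is identical after replacing $I_{\sbullet m}$ by $\overline{I}_{\sbullet m}$.

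Part (b) is proved symmetrically: from $\models_{s_2}\psi_1\rightarrow\psi_2$ we have $[\!\![\psi_1]\!\!]\subseteq[\!\![\psi_2]\!\!]$ in every model. For any $g\in G$, the premise $\mathfrak{M},g\models_{s_1}\gboxnm{1}\psi_2$ yields $[\!\![\psi_2]\!\!]\subseteq I_{g\sbullet}$, hence $[\!\![\psi_1]\!\!]\subseteq I_{g\sbullet}$, which delivers the conclusion $\mathfrak{M},g\models_{s_1}\gboxnm{1}\psi_1$. Replacing $I_{g\sbullet}$ by $\overline{I}_{g\sbullet}$ gives the case ${\sf Y}=\ogboxnm$.

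There is no real obstacle; the only subtle point is recognising that the threshold $c=1$ is essential, because it lets the inequality $|I_{\sbullet m}(\varphi)|/|[\!\![\varphi]\!\!]|\geq c$ collapse to the set-theoretic condition $[\!\![\varphi]\!\!]\subseteq I_{\sbullet m}$ that ignores the denominators appearing in the semantics of $\boxng,\ogboxng,\boxnm,\ogboxnm$. For $c<1$, the denominator $|[\!\![\varphi]\!\!]|$ is sensitive to $\varphi$ itself, so monotonicity of implication does not straightforwardly transfer, which is exactly why the proposition is restricted to weight $1$.
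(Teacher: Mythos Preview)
Your proof is correct and follows essentially the same approach as the paper's own proof: both reduce the weight-$1$ semantics $\frac{|I_{\sbullet m}(\varphi)|}{|[\!\![\varphi]\!\!]|}\geq 1$ to the set inclusion $[\!\![\varphi]\!\!]\subseteq I_{\sbullet m}$ and then apply transitivity with $[\!\![\varphi_1]\!\!]\subseteq[\!\![\varphi_2]\!\!]$. The paper only writes out the $\boxng$ case and declares the others similar, whereas you spell out all four reformulations and add a remark explaining why the argument breaks for $c<1$; this extra commentary is helpful but does not change the underlying argument.
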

\begin{proof}
    {\rm All proofs are similar. Hence, we only prove the case of $\boxng$.  Let $\mathfrak{M}:=(G, M, I, v)$ be a model and $m\in M$. Then $\models_{s_{1}} \varphi_{1}\rightarrow \varphi_{2}$ implies $[\!\![\varphi_{1}]\!\!]\subseteq [\!\![\varphi_{2}]\!\!]$. By definition,  $\mathfrak{M}, m\models_{s_{2}} \gboxng{1}\varphi_{2}$ implies $\frac{|I_{\sbullet m}\cap[\!\![\varphi_{2}]\!\!]|}{|[\!\![\varphi_{2}]\!\!]|}=1$, which in turn implies $[\!\![\varphi_{1}]\!\!]\subseteq [\!\![\varphi_{2}]\!\!]\subseteq  I_{\sbullet m}$.  Hence, we have also $\mathfrak{M},m\models_{s_{2}} \gboxng{1}\varphi_{1}$.}  
\end{proof}
However, except for boundary cases, the axioms of {\bf BM} cannot be easily generalized to the weighted modal logic. 
\begin{proposition}
    {\rm $\not{\models}_{s_{1}} p\rightarrow \gdiamondm{c}\gboxg{d} p$ and $\not{\models}_{s_{1}} q\rightarrow \gboxm{c}\gdiamondg{e} q$ for any $c,d\in(0,1]$ and $e\in [0, 1)$.}
\end{proposition}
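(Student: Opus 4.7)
The strategy is to refute each implication by constructing an explicit finite context-based counter-model. For both I will use a one-attribute model in which the extension of the relevant propositional variable is a single ``special'' object surrounded by many irrelevant copies; this dilution forces the inner weighted modality to fail at the unique attribute, which in turn kills the outer modality, even though the propositional variable is still true at the special object.

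For the first non-validity, I would fix $c,d\in(0,1]$, pick $N\in\mathbb{N}$ large enough that $1/(N+1)<d$, and take $\mathfrak{M}=(G,M,I,v)$ with $G=\{g,g_1,\ldots,g_N\}$, $M=\{m\}$, $I=G\times M$, and $v_{s_{1}}(p)=\{g\}$. Then $|I_{\sbullet m}(p)|/|I_{\sbullet m}|=1/(N+1)<d$, so $\mathfrak{M},m\not\models_{s_{2}}\gboxg{d}p$; since $I_{g\sbullet}=\{m\}$, the ratio $|I_{g\sbullet}(\gboxg{d}p)|/|I_{g\sbullet}|$ equals $0$, which never strictly exceeds $1-c$ for $c\in(0,1]$. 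Hence $\mathfrak{M},g\not\models_{s_{1}}\gdiamondm{c}\gboxg{d}p$ while $\mathfrak{M},g\models_{s_{1}}p$, so $p\rightarrow\gdiamondm{c}\gboxg{d}p$ is refuted.

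For the second non-validity, I would reuse the same model template. Fix $c\in(0,1]$ and $e\in[0,1)$, pick $N$ with $1/(N+1)\leq 1-e$ (possible because $e<1$), and set $v_{s_{1}}(q)=\{g\}$. Then $|I_{\sbullet m}(q)|/|I_{\sbullet m}|=1/(N+1)\leq 1-e$ violates the strict inequality required by $\gdiamondg{e}$, so $\mathfrak{M},m\not\models_{s_{2}}\gdiamondg{e}q$. Again $I_{g\sbullet}=\{m\}$, so $|I_{g\sbullet}(\gdiamondg{e}q)|/|I_{g\sbullet}|=0<c$, and therefore $\mathfrak{M},g\not\models_{s_{1}}\gboxm{c}\gdiamondg{e}q$ while $q$ holds at $g$.

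The construction has no hard step; the only place where care is needed is at the boundaries of the allowed parameter range. For the first claim one should notice that $c=1$ still works because the outer requirement becomes the strict inequality $0>0$, which does fail; for the second claim the restriction $e<1$ is essential, since otherwise no finite $N$ could make the fraction at $m$ below $1-e=0$, and the restriction $c>0$ is what makes the outer $\geq c$ condition fail given an inner fraction of $0$. These boundary verifications are the only subtle points of the argument.
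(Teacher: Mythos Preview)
Your proof is correct and follows essentially the same approach as the paper: both construct a finite model with a single attribute $m$ related to all objects, a distinguished object $g_0$ as the sole member of the propositional extension, and enough ``filler'' objects to drive the inner ratio at $m$ below the required threshold, which then forces the outer modality to fail at $g_0$. The only cosmetic difference is that the paper treats both claims with one model by choosing $n\geq\max\bigl(\tfrac{1}{1-e},1+\tfrac{1}{d}\bigr)$, whereas you split into two (identically shaped) models; your boundary-case discussion is also slightly more explicit than the paper's.
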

\begin{proof}
    To show $\not{\models}_{s_{1}} p\rightarrow \gdiamondm{c}\gboxg{d} p$, we need to construct a model that refutes the formula. Let $\mathfrak{M}:=(G,M, I, v)$ be a model such that 
     \bla
     \item %$|M|=m$ where $m \geq 1+\frac{1}{[1-e]}$ and
     $|G|=n$ where $n\geq \max(\frac{1}{1-e},1+ \frac{1}{d})$,
       \item  for some $g_{0}\in G$, $v_{1}(p)=v_1(q):= \{g_{0}\}$
       \item  $I_{g_{0}\cdot}=\{m_0\}$ and $I_{\sbullet m_0}=G$ for some $m_0\in M$.
     \ela
     As $\frac{|I_{\sbullet m_0}(p)|}{|I_{\sbullet m_0}|}=\frac{1}{n}$ and $d>\frac{1}{n}$, we have $\mathfrak{M}, m\models_{s_{2}} \neg \gboxg{d}p$ for all $m\in I_{g_{0}\cdot}$, which implies that  $I_{g_{0}\cdot}(\gboxg{d}p)=\emptyset$. Hence, $\frac{|I_{g_{0}\cdot}(\gboxg{d}p)|}{|I_{g_{0}\cdot}|}=0$. Therefore,  for any $c\in (0, 1]$, $\mathfrak{M}, g_{0}\not\models_{s_1} \gdiamondg{c}\gboxg{d}p.$
     
    In addition,  $\frac{|I_{\sbullet m_0}(q)|}{|I_{\sbullet m_0}|}=\frac{1}{n}\leq 1-e$ which implies that $\mathfrak{M}, m\models_{s_{2}} \neg\gdiamondg{e}q$ for any $m\in I_{g_{0}\cdot}$, i.e., $\frac{|I_{g_{0}\cdot}(\gdiamondg{e}q)|}{|I_{g_{0}\cdot}|}=0$, whence $\mathfrak{M}, g_{0}\not{\models}\gboxm{c}\gdiamondg{e}q$ for any $c\in (0, 1]$
\end{proof}
Note that the proposition does not hold for some boundary values of $c,d,e$. For example, if $d=0$, then $\gboxg{0}p$ is trivially satisfied in all $m\in M$. Hence, $\gdiamondm{c}\gboxg{d} p$ and so $p\rightarrow \gdiamondm{c}\gboxg{d} p$ are valid for any $c>0$

\begin{proposition}
    {\rm   $\not{\models}_{s_{2}} \gboxng{c}(p\wedge\neg q)\rightarrow (\gboxng{d}\neg p\rightarrow \gboxng{e}\neg q)$ for $c,d\in[0,1)$ and $e\in(0,1]$ and
     $ \not{\models}_{s_{1}} p\rightarrow \gboxnm{c}\gboxng{d}p$ for $c, d\in(0,1]$}
\end{proposition}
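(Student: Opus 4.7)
The plan for both non-validities is to construct finite $\mathbb{K}$-based models in which the antecedent holds at a chosen world while the consequent fails, with the construction parameters depending on $c,d,e$. This is the same counter-model strategy used in the preceding propositions of this subsection; the crucial freedom is that the numerator and denominator of each weighted ratio can be tuned almost independently by inserting padding copies of objects.

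For the first non-validity, I would fix one property $m_0$ and partition $G$ into four groups: one object $g_1$ witnessing $p\wedge\neg q$, one object $g_2$ witnessing $\neg p\wedge\neg q$, $K$ further copies $g_{2,1},\ldots,g_{2,K}$ of the $\neg p\wedge\neg q$ type placed \emph{outside} $I_{\sbullet m_0}$, and $N$ copies $g_{3,1},\ldots,g_{3,N}$ of type $\neg p\wedge q$ placed \emph{inside} $I_{\sbullet m_0}$. With $I_{\sbullet m_0}=\{g_1,g_{3,1},\ldots,g_{3,N}\}$, the three relevant ratios at $m_0$ compute to $1$, $\tfrac{N}{N+K+1}$, and $\tfrac{1}{K+2}$ respectively. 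The plan is first to pick $K$ large enough that $\tfrac{1}{K+2}<e$ (available since $e>0$) and then $N$ large enough that $\tfrac{N}{N+K+1}\geq d$ (available since $d<1$). The first ratio is $1\geq c$ for every $c\in[0,1)$, so the antecedent and the inner premise hold, while $\gboxng{e}\neg q$ fails at $m_0$.

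For the second non-validity, I would take $G=\{g_0,g_1,\ldots,g_n\}$ with $v_{s_1}(p)=G$, $M=\{m_0,m_1,\ldots,m_K\}$, and set $I_{g_0\sbullet}=\{m_0\}$ but $I_{g_i\sbullet}=M$ for every $i\geq 1$. Then $I_{\sbullet m_0}=G$ and $I_{\sbullet m_j}=\{g_1,\ldots,g_n\}$ for $j\geq 1$, so the ratio defining $\gboxng{d}p$ is $1$ at $m_0$ and $\tfrac{n}{n+1}$ at each $m_j$. Choosing $n\geq d/(1-d)$, which is possible whenever $d<1$, makes $\gboxng{d}p$ true at every property, so $[\!\![\gboxng{d}p]\!\!]=M$; the outer ratio at $g_0$ is then $\tfrac{|I_{g_0\sbullet}(\gboxng{d}p)|}{|[\!\![\gboxng{d}p]\!\!]|}=\tfrac{1}{K+1}$, which drops below $c$ as soon as $K$ is chosen large enough, while $p$ holds at $g_0$ by construction.

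The main obstacle I expect is the boundary case $d=1$ in the second claim. Whenever $g_0\in[\!\![p]\!\!]$, any $m$ at which $\gboxng{1}p$ holds must satisfy $[\!\![p]\!\!]\subseteq I_{\sbullet m}$ and hence lie in $I_{g_0\sbullet}$; this forces the outer ratio to equal $1$ whenever $[\!\![\gboxng{1}p]\!\!]$ is nonempty, so the padding trick above cannot make it strictly smaller than $c$. The remedy I plan is to engineer the model so that no property sees all of $[\!\![p]\!\!]$: take $|[\!\![p]\!\!]|\geq 2$ with each $m\in M$ missing at least one member of $[\!\![p]\!\!]$ from its left-neighborhood, so that $[\!\![\gboxng{1}p]\!\!]=\emptyset$. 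Under the natural reading of Definition~\ref{satisfictionwighted} in which a zero-denominator weighted ratio falsifies $\gboxnm{c}(\cdot)$ for $c>0$, this refutes $\gboxnm{c}\gboxng{1}p$ at $g_0$. Justifying this reading and merging it smoothly with the $d<1$ construction will be the delicate point of the argument.
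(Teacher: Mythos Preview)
Your first counter-model is essentially the paper's: both set $[\!\![p\wedge\neg q]\!\!]$ to a singleton lying entirely inside $I_{\sbullet m_0}$, then tune the $\neg p$-ratio and the $\neg q$-ratio independently by choosing the number of $\neg p$-objects inside $I_{\sbullet m_0}$ (your $N$, the paper's $k_3$) and the number of $\neg q$-objects outside it (your $K+1$, the paper's $k_4$).

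For the second claim your route diverges from the paper's. You inflate the \emph{denominator} $|[\!\![\gboxng{d}p]\!\!]|$ by forcing every property to satisfy $\gboxng{d}p$, then make $I_{g_0\sbullet}$ a small subset of $M$; this works cleanly for $d<1$ but, as you correctly note, breaks at $d=1$ and needs a separate empty-denominator argument. The paper instead drives the \emph{numerator} to zero: it takes $I_{g_0\sbullet}=\{m_0\}$ with $I_{\sbullet m_0}=\{g_0,g_1\}$ and $[\!\![p]\!\!]=G\setminus\{g_1\}$ of size $n$, so that $|I_{\sbullet m_0}(p)|/|[\!\![p]\!\!]|=1/n<d$. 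This kills $\gboxng{d}p$ at the sole element of $I_{g_0\sbullet}$ for every $d>0$ uniformly, so no case split on $d$ is needed. The trade-off is that the paper then simply asserts the outer ratio equals $0$ without checking that $[\!\![\gboxng{d}p]\!\!]\neq\emptyset$; and your own observation---that at $d=1$ any $m$ satisfying $\gboxng{1}p$ must lie in $I_{g_0\sbullet}$ whenever $g_0\in[\!\![p]\!\!]$---applies equally to the paper's model. So the paper's construction is simpler and avoids your case split, but it is not actually more rigorous on the boundary subtlety you flagged.
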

\begin{proof}
    First, to construct a  model $\mathfrak{M}=(G, M, I, v)$ and a property $m_0\in M$ refuting  $\gboxng{c}(p\wedge\neg q)\rightarrow (\gboxng{d}\neg p\rightarrow \gboxng{e}\neg q)$, let us consider the following three-way contingency table specifying the cardinality of respective truth sets, e.g., $k_1=|I_{\sbullet m_0}(p\wedge q)|$, $|I_{\sbullet m_0}(p\wedge\neg q)|=1$, and so on.
    \[\begin{array}{|c|c|c|c|c|c|c|}\cline{1-3}\cline{5-7}
       I_{\sbullet m_{0}} & q &\neg q& & \overline{I}_{\sbullet m_{0}}&q&\neg q\\ \cline{1-3}\cline{5-7}
        p & k_1 &1& &p &k_2&0\\
        \neg p&k_3&0& &\neg p& 0&k_4\\ \cline{1-3}\cline{5-7}
    \end{array}\]
    
    Then, $\gboxng{c}(p\wedge\neg q)$, $\gboxng{d}\neg p$, and $\neg\gboxng{e}\neg q$ corresponds to 
    $\frac{|I_{\sbullet m_{0}}(p\wedge\neg q)|}{|[\!\![p_\wedge\neg q]\!\!]|}=\frac{1}{1+0}=1\geq c$, $\frac{|I_{\sbullet m_{0}}(\neg p)|}{|[\!\![\neg p]\!\!]|}=\frac{k_3}{k_3+k_4}\geq d$ and  $\frac{|I_{\sbullet m_0}(\neg q)|}{|[\!\![\neg q]\!\!]|}=\frac{1}{1+k_4}< e$, respectively. Hence, it suffices to choose $k_4>\frac{1}{e}-1$ and $k_3\geq \frac{k_4\cdot d}{1-d}$. Obviously, for any $c,d\in[0,1)$ and $e\in(0,1]$, there are always such values of $k_4$ and $k_3$ satisfying the condition. Therefore, the counter-model exists.

Second, to show $ \not{\models}_{s_{1}} \varphi\rightarrow \gboxnm{c}\gboxng{d}\varphi$, we consider a model $\mathfrak{M}=(G, M, I, v)$ where 
    \bla
    \item $|G|=n+1$ such that $\frac{1}{n}< d$.
    \item For $g_{0}, g_{1}\in G$ and $m_{0}\in M$, $I_{g_{0}\cdot}=\{m_{0}\}$ and $I_{\sbullet m_{0}}=\{g_{0}, g_{1}\}$.
    \item $[\!\![\varphi]\!\!]=G\setminus\{g_{1}\}$.
    \ela
   Then, $\frac{|I_{\sbullet m_{0}}(\varphi)|}{|[\!\![\varphi]\!\!]|}=\frac{1}{n}< d$ which implies that $\mathfrak{M}, m_{0}\models_{s_{2}} \neg\gboxng{d}\varphi$. So $\frac{|I_{g_{0}\cdot}( \gboxng{d}\varphi)|}{|[\!\![\gboxng{d}\varphi]\!\!]|}=0$ whence  $\mathfrak{M}, g_{0}\not{\models}_{s_{1}} \gboxnm{c}\gboxng{d}\varphi$.
\end{proof}
These negative results show that the straightforward generalization of several main axioms in {\bf BM} are not valid in the weighted setting. Hence, the complete axiomatization of two-sorted weighted modal logic will be a challenging issue to be addressed in our future work.   

\subsection{More general models}
While both graded and weighted extensions of {\bf BM} are interpreted in context-based models, the latter can also express information in more general models.  A straightforward generalization of formal context is to incorporate a measure into it. Specifically, a {\em context with measure} is a quadruple $\mathbb{K}=(G,M,I,\mu)$, where $(G,M,I)$ is a context and $\mu=\{\mu_1,\mu_2\}$ is a two-sorted monotonic measure such that $\mu_1:\mathcal{P}(G)\rightarrow\Re^+\cup\{0\}$ and $\mu_2:\mathcal{P}(M)\rightarrow\Re^+\cup\{0\}$ satisfy $\mu(X)\leq\mu(Y)$ if $X\subseteq Y$ and $\mu(\emptyset)=0$. Then, a model based on $\mathbb{K}$ is a 5-tuple $\mathfrak{M}:=(G, M, I, \mu, v)$, where $v$ is the valuation function defined above. The weighted modal logic formulas are interpreted in such models by replacing the cardinality measure with $\mu$. 

For such a general framework, there exist various interesting subclasses of contexts with different types of measures\cite{hal05}. The most well-known measure arises in probability theory. When the measure $\mu$ is defined by a probability distribution, we call $(G,M,I,\mu)$ a {\em probabilistic context}. Then,  weighted modal formulas  interpreted in probabilistic context-based models 
express lower bounds on conditional probabilities. According to the semantics in Definition~\ref{satisfictionwighted}, our weighted modal logic is interpreted in a special kind of probabilistic context-based models where the probability measure is derived from the uniform distribution.

On the other hand, we can also consider measures derived from possibility distributions.  Given  an universe $W$, a {\em possibility distribution} over $W$ is a mapping $\pi:W\rightarrow [0,1]$, The possibility and necessity measures derived from $\pi$ are $\Pi, N:\mathcal{P}(W)\rightarrow[0,1]$, respectively  defined by $\Pi(X)=\sup_{x\in X}\pi(x)$ and $N(X)=1-\Pi(\overline{X})$. As above, when $\mu$ is a possibility measure, we call $(G,M,I,\mu)$ a {\em possibilistic context} and for models based on it, weighted modal formulas are interpreted as expressing lower bounds on conditional possibilities. However, unlike in the case of probability measure, where the definition of conditional probability is unique, there is no consensus on the definition of conditional possibility. In addition to the definition of conditional possibility by $\Pi(Y|X)=\frac{\Pi(X\cap Y)}{\Pi(X)}$ used here, there is an alternative definition that requires $\Pi(Y|X)$ to be the least specific (i.e. largest) solution satisfying $\min(\Pi(Y|X),\Pi(X))=\Pi(X\cap Y)$.

Although there exist some work on logical systems for reasoning about conditional probability and possibility~\cite{DautovicDO21,DautovicDO23,hal05,LiauL96,OgnjanovicRM16}, which could be starting points for axiomatizing weighted modal logic, adapting these systems to our logic is still a challenging issue because of a subtle difference between them. While our weighted modal operators are unary, existing logics all employ dyadic representation of conditional probability and possibility. Hence, in those systems, when conditional probability or possibility formulas like $P^{\geq c}(\varphi\mid\psi)$ are interpreted with respect to two events (i.e. subsets of possible worlds), both events have explicit representations (respectively, $\varphi$ and $\psi$) in the logical language. By contrast, in the interpretation of weighted modal formulas like  $\gboxg{c}\varphi$ or $\gboxng{c}\varphi$, only the event corresponding to $\varphi$ is represented in the logical language. The other event corresponds to  neighborhoods of objects or properties (i.e. $I_{g\sbullet}$ or $I_{\sbullet m}$) and does not occur in the formula explicitly. As a consequence, probability or possibility axioms that can be easily represented in existing logics may be not representable in weighted modal logic. For example, the basic axiom $P^{\geq c}(\psi\mid\top)\wedge P^{\geq d}(\varphi\mid\psi)\rightarrow P^{\geq cd}(\varphi\wedge\psi\mid\top)$  in \cite{DautovicDO23} is not representable here.

Another way to generalize a formal context is to replace its incidence relation with measures instead of adding a measure to it. Let us define a {\em measure-based context} as $\mathbb{K}=(G,M,\mu)$, where $\mu=(\mu^1,\mu^2)$ are measure functions $\mu^1: G\rightarrow (\mathcal{P}(M)\rightarrow\Re^+\cup\{0\})$ and $\mu^2: M\rightarrow (\mathcal{P}(G)\rightarrow\Re^+\cup\{0\})$ such that $\mu^1(g)$ and $\mu^2(m)$ both satisfy the above-mentioned monotonicity and boundary conditions for any $g\in G$ and $m\in M$. For simplicity, we will write $\mu_g$ and $\mu_m$ instead of $\mu^1(g)$ and $\mu^2(m)$,  respectively. This kind of context can encode information about the strength or frequency of connections between objects and properties.  In case that $\mu$ is a possibility measure, we can recover a possibility distribution $\pi$ for each $\mu_g$ and $\mu_m$ and define two extra measures by $\vartriangle\!\!(X)=\inf_{x\in X}\pi(x)$ and $\triangledown(X)=1-\inf_{x\not\in X}\pi(x)$, which are called guaranteed possibility and potential certainty, respectively\cite{DuboisSP07}. Then, in accordance with \cite{DuboisSP07}, the modalities $\Box^c, \Diamond^c, \boxminus^c,$ and $\nabla^c$ are interpreted in a way that exactly correspond to the measures $N,\Pi, \vartriangle$, and $\triangledown$, respectively. For example, if $\mathfrak{M}=(G,M,\mu,v)$ is a model based on the possibility measure context $(G,M,\mu)$, then the satisfaction of the formula $\gboxng{c}\varphi$ in $m\in M$ is defined by $\mathfrak{M},m\models_{s_{2}} \gboxng{c}\varphi$ iff $\vartriangle_m\!\!([\!\![\varphi]\!\!])\geq c$, where $\vartriangle_m$ is the guaranteed possibility derived from the possibility  measure $\mu_m$. There has been a little work on the axiomatic systems for the $(\Box,\Diamond)$-fragment of single-sorted weighted modal logic interpreted in this kind of model~\cite{qml1,qml2,qml3}. There is no essential difficulty to generalize those systems to their many-sorted version. Though, to our best knowledge, there is no existing axiomatic system for the $(\boxminus,\nabla)$-fragment of single-sorted weighted modal logic yet, its axiomatization seems quite straightforward by using a translation similar to that between {\bf KB}  and {\bf KF}. Hence, a less trivial issue on the axiomatization of logic interpreted in possibility measure-based models should be about the weighted extension of {\bf BM}, which will be left for the future work. 

\section{Comparison with existing work}\label{relatedwork}
Numerous researchers have explored non-distributive logic as a framework for formal context. This section highlights related work and compares it to our findings.

Bimbo and Dunn explore the propositional non-distributive logic \textbf{LN} in \cite{bimbo2001four}. Its formulas are defined by the grammar $\varphi:=p(p\in P)\mid\top\mid\bot\mid\varphi\wedge\varphi\mid\varphi\vee\varphi$, where $P$ is the set of propositional symbols. By using a two-sorted relational semantics,  formulas of \textbf{LN} are interpreted as formal concepts in a context.

Subsequently, \textbf{LN} is extended to the propositional lattice logic (\textbf{PLL}) and its modal extension in \cite{HCgame, CH}. In \cite{HCgame}, game-theoretic semantics for both \textbf{PLL} and  \textbf{MLL}, a modal extension of \textbf{PLL} is explored, building on the two-sorted relational frames. For \textbf{MLL}, additional relations are added to the context, allowing propositional formulas to be treated as concepts and modal formulas to be interpreted with these relations alongside $I$.

In \cite{CH}, a two-sorted model for \textbf{PLL} is defined as $\mathfrak{M} := ((G, M, I), v)$, where $(G, M, I)$ is a context and $v$ is a valuation assigning each $p$ to the extent of a concept. Satisfaction $(\models)$ and co-satisfaction $(\succ)$ are used to interpret formulas. Specifically, for an atomic formula $p$ and object $g$ in the model $\mathfrak{M}$, we have $\mathfrak{M}, g\models p$ if $g \in v(p)$, and for an property $m$, $\mathfrak{M}, m\succ p$ if $m \in v(p)^{+}$. The definitions extend to compound formulas inductively. 

Also, two-sorted residuated normal modal logic \(\mathbf{ML_{2}}\) is introduced in \cite{CH}. The language of \(\mathbf{ML_{2}}\) is the same as that of \(\mathbf{KB}\), but the proof systems differ. \(\mathbf{ML_{2}}\) has a sequent calculus for a specific class $\mathfrak{K}$ of contexts, ensuring that neighborhoods of $g$ and $m$ are non-empty for all \(g \in G\) and \(m \in M\), whereas we investigate a Hilbert-style axiomatic system for the class of all contexts. By adding the axioms \(\boxg\bot_{s_{2}} \rightarrow \bot_{s_{1}}\) and \(\boxm\bot_{s_{1}} \rightarrow \bot_{s_{2}}\) to \(\mathbf{KB}\), which correspond to certain axioms of \(\mathbf{ML_{2}}\), we can extend \(\mathbf{KB}\) to be sound and complete for the class $\mathfrak{K}$.  Additionally, we can established a translation from \(\mathbf{ML_{2}}\) to \(\mathbf{KB}\), so that a sequent is provable in \(\mathbf{ML_{2}}\) if and only if its corresponding formula is a theorem in the extended \(\mathbf{KB}\) system.  

It is shown that {\bf PLL} can be reduced to \(\mathbf{ML_{2}}\) with two translations~\cite{CH}. As a {\bf PLL} formula represents a formal concept, the translated modal formulas in \(\mathbf{ML_{2}}\) correspond to its extent and intent. This shows that a formal concept can be represented by  a pair of formulas in \(\mathbf{ML_{2}}\), just like its logical representation in {\bf KB} (via {\bf KF} and our translation $\rho$ in Section~\ref{KF}).  Since \(\mathbf{BM}\) contains \(\mathbf{KB}\) as a fragment, it is easy to see that {\bf PLL} is also reducible to \(\mathbf{BM}\). However, \(\mathbf{BM}\) is more expressive than \(\mathbf{ML_{2}}\) due to its inclusion of box and window modalities at the same time. Hence, unlike \(\mathbf{ML_{2}}\) that only deals with formal concepts, most kinds of concepts introduced in  FCA and RST  can be represented using the \textbf{BM} language. In this sense, logical representations of these concepts shown in  Section~\ref{BMLdBa} properly generalize the translations provided in \cite{CH}.

In \cite{Conradie2017167}, Conradie et al. examined non-distributive modal logic as the epistemic logic of concepts within a context. The formulas for the epistemic logic \textbf{L} are specified by the grammar: $\varphi := p(p \in P) \mid \top \mid \bot \mid \varphi \wedge \varphi \mid \varphi \vee \varphi \mid \square_{i} \varphi$ where \( P \) is the set of propositions and \( i \in Ag \), the set of agents. They define an \textit{enriched formal context} as \( \mathbb{F} = ((G, M, I), \{R_{i} \mid i \in Ag\}) \), with \( R_{i} \subseteq G \times M \), satisfying certain additional properties. A model for \textbf{L} is represented as \( \mathfrak{M} = (\mathbb{F}, v) \), where \( v \) maps propositional variables to concepts in \( (G, M, I) \). Interpretation involves two sub-relations: satisfaction \( (\models) \) and co-satisfaction \( (\succ) \). The definitions are:

\begin{center}
    \( \mathfrak{M}, g \models \square_{i} \varphi \) if for all \( m \in M \), if \( \mathfrak{M}, m \succ \varphi \), then \( g R_{i} m \).

    \( \mathfrak{M}, m \succ \square_{i} \varphi \) if for all \( g \in G \), if \( \mathfrak{M}, g \models \square_{i} \varphi \), then \( g I m \).
\end{center}

The logic is further extended to a basic modal logic, {\L}, in \cite{CONRADIE2021371} with two modalities $\Diamond$ and $\square$.
%the schema: $\varphi := p(p \in P) \mid \top \mid \bot \mid \varphi \wedge \psi \mid \varphi \vee \psi \mid \square \varphi \mid \Diamond \varphi$. The axiomatic system builds on \textbf{L} by adding an axiom for \( \Diamond \). 
%The enriched formal context for this logic is also defined as \( \mathbb{F} = ((G, M, I), \{R_{\square}, R_{\Diamond}\}) \), with models and interpretations aligned with those of \textbf{L}.

  %The logics discussed above are syntactically single-sorted, while the semantics are two-sorted. The set of interpretations consists of formal concepts. 
  The two-sorted logic known as \textbf{PDBL} was proposed by Howlader et al. in \cite{HOWLADER2023115}. Its formulas are defined by $\varphi:=\top\mid\bot\mid p\mid P\mid \varphi\sqcup\varphi\mid\varphi\sqcap\varphi\mid\neg\varphi\mid\lrcorner\varphi$, where $p\in\textbf{OV}$, the set of object variable and $P\in \textbf{PV}$, the set of property variable. On the aspect of algebraic semantics, \textbf{PDBL} corresponds to pure double Boolean algebra. For relational semantics, its formulas are treated as semiconcepts, following a method similar to that used in \cite{Conradie2017167,CH}. Furthermore, 
  \textbf{PDBL} is also extended to a modal system, \textbf{MPDBL}. For its relational semantics, contexts are further extended to Kripke contexts, which include two additional relations  $R$ and $S$ over $G$ and $M$, respectively. Modal formulas are then interpreted  with respect to  $R$ and $S$. 
   
The logics proposed in this paper differ from the above-mentioned ones with the genuine two-sorted syntax. The compound formulas for \textbf{KB}, \textbf{KF}, and \textbf{BM} are constructed separately for each sort. These logics are based on classical propositional logic, which is distributive. By contrast, most logics discussed in this section are non-distributive and use a single-sorted language, except for {\bf PDBL} proposed in \cite{HOWLADER2023115} and \(\mathbf{ML_{2}}\) used in \cite{CH}. While we have seen that \(\mathbf{ML_{2}}\) is essentially equivalent to our {\bf KB} system,   \textbf{PDBL} are only two-sorted in its atomic formulas, but there is no distinction of sorts for compound formulas. 
  
Although our logics and existing ones are all interpreted in a context with two universes, there is a substantial difference between their ways of interpretation. 
For \textbf{KB}, \textbf{KF}, and \textbf{BM}, the interpretation of formulas is uniformly based on the satisfaction relation in both universes. As a consequence, each formula for a specific sort corresponds to a subset of the designated universe and logical connectives are interpreted as set-theoretic operations like union, intersection, and negation. On the other hand, in existing logics, the interpretation of formulas is based on satisfaction relation with one universe and co-satisfaction relation with the other.  The interpretation assigns  formulas to elements in a specific structure, such as a concept lattice~\cite{bimbo2001four,HCgame,CH,Conradie2017167,CONRADIE2021371} or algebra of semiconcepts \cite{HOWLADER2023115}, where logical connectives reflect algebraic operations. For instance, for $\varphi$ and $\psi$ being interpreted as concepts (semiconcepts) $(A, B)$ and $(C, D)$,  $\varphi \wedge \psi$ corresponds to meet of concepts, while  $\varphi \vee \psi$ corresponds to their join.

%We focus on the primary relation \( I \) for interpreting modal operators related to both sets of objects and properties, differing from other authors who may use various relations.

Moreover, our modal formulas are interpreted simply with respect to the incidence relation $I$ in a context. Hence, our modalities exactly correspond to $(\cdot)^+$ and $(\cdot)^-$ operators in FCA and approximation operators in RST. On the other hand, most modal extensions of logics discussed in this section, except \(\mathbf{ML_{2}}\) that is essentially equivalent to our {\bf KB}, need to use extra relations over the context to interpret their modalities.

 % In the case of types \textbf{KB}, \textbf{KF}, and \textbf{BM}, the interpretation relation is represented by a single relation: satisfaction. Each formula for a sort is interpreted as a subset of the domain of discourse designated for that sort. Therefore, all the logical connectives are interpreted as set-theoretic operations: union, intersection, and negation. In this section, the interpretation of the logic involves two key concepts: satisfaction and co-satisfaction. Formulas are interpreted within a specific structure. For example, in scenarios \cite{CH} and \cite{CONRADIE2021371}, the formulas are interpreted as formal concepts, and the collection of all concepts forms a complete lattice. The logical connectives are interpreted in a way that mimics the corresponding algebraic operations. For instance, in  \cite{CH}, and \cite{CONRADIE2021371}, if two formulas $\varphi$ and $\psi$ interpreted as two concept $(A, B)$ and $(C, D)$ in a model, respectively. The satisfaction of  $\varphi\wedge\psi$ is defined as the intersection of two sets $A\cap C$ and the co-satisfaction of $\varphi\vee\psi$ is defined as the intersection of $B\cap D$. A similar procedure was performed in \cite{HOWLADER2023115} using semiconcepts instead of concepts. In our work, we use only the primary relation $I$ to interpret the modal operators concerning the set of objects and the set of properties. In contrast, other authors employ different relations between the sets of objects and properties or on the set of objects (properties).

 In addition to the logical aspect, we also establish a representation theorem for an arbitrary dBa in terms of Boolean algebras. This result is entirely novel in relation to the work conducted in \cite{howlader2020}. In that paper, the authors proved Stone-type topological representation for two specific subclasses of dBas, i.e., fully contextual and pure dBas. Given our algebraic representation here along with the Stone representation of Boolean algebras, we will be able to prove a Stone-type representation theorem for arbitrary dBas.

Based on the comparison above, we can see that the contribution of the paper and its significance are twofold. On one hand, we provide a uniform framework {\bf BM} that can represent and reason with general information in formal contexts. As we have argued in Section~\ref{BMLdBa}, our representational formalism is much more expressive and flexible than basic FCA-related logics like {\bf PLL}. On the other hand, we address an open issue in \cite{howlader2020} by proving a novel characterization theorem of dBa in terms of Boolean algebras, which has profound implication on the algebraic study of FCA.

 % ** In his study \cite{CH}, Hartonas explored two-sorted residuated normal modal logic ($\bf{ML_{2}}$). The language used in $\bf{ML_{2}}$ is identical to that of \textbf{KB}, but the distinction lies in the proof system. Hartonas developed a sequent calculus for a specific class of contexts. In this class, for each context $(G, M, I)$, it is ensured that $I(g)$ and $I(m)$ are non-empty for all $g \in G$ and $m \in M$. 
%We investigated a Hilbert-style axiomatic system for the class of contexts. By adding axioms $\boxg\bot_{s_{2}}\rightarrow \bot_{s_{1}}$ and  $\boxm\bot_{s_{1}}\rightarrow \bot_{s_{2}}$ to the \textbf{KB} that corresponds to axioms $\blacksquare \text{f}\vdash \bot$ and $\square\bot\vdash \text{f}$ of $\bf{ML}_{2}$, we can show that the \textbf{KB} is both sound and complete for the class of contexts that possess property: $I(g)$ and $I(m)$ are non-empty for all $g \in G$ and $m \in M$. Moreover, by defining a suitable  translation from $\bf{ML}_{2}$ to \textbf{KB}, we can show that a sequent in provable in $\textbf{ML}_{2}$ if and only if the corresponding formula is a theorem in \textbf{KB}.  As \textbf{BM} contains \textbf{KB} as a fragment, we can reduced  $\textbf{ML}_{2}$ into \textbf{BM} and so \textbf{PLL}. Note that the logic \textbf{BM} is more expressive than $\bf{ML}_{2}$, as it includes necessity and sufficiency operators. 

\section{Conclusion and future direction}
\label{conclusion}
In this paper, we present three two-sorted modal logics, {\bf KB}, {\bf KF}, and {\bf BM}, for reasoning with formal contexts. While logics {\bf KB} and {\bf KF} can represent and reason about rough and formal concepts, respectively, the logic {\bf BM} provides a uniform framework to deal with various notions of concepts in formal contexts at the same time. We prove soundness and completeness of Hilbert-style axiomatic systems for all the three logics. With the algebraic counterpart of logical semiconcepts and protoconcepts, we also prove the characterization of a (pure) dBa in terms of its underlying Boolean algebras. In addition, we use a running example to show the potential applicability of our logics to knowledge representation and reasoning in data mining. Finally, to represent more fine-grained quantitative information in some application contexts, we consider two possible extensions of our logics from a  semantic perspective.

As a promising research program, logical representation and reasoning with formal contexts is still in its relatively early stage of development. Hence, it is inevitable that many important issues remain unexplored yet. Below we list several major open problems and directions of further exploration for this program.

\noindent {\bf Axiomatization of graded and weighted modal logics}: While we have presented the semantics for graded and weighted extensions of {\bf BM}, their complete axiomatization is still lacking. On one hand, although there exists some complete axiomatic system for single-sorted graded modal logic~\cite{DeCaro}, the window modalities in {\bf BM} incur further difficulty to the proof of its completeness. Therefore, developing a complete axiomatization of two-sorted graded modal logic with respect to context-based semantics will need a novel synergy of both techniques for proving the completeness of {\bf BM} and graded modal logic. On the other hand, for weighted modal logic, we have shown that the straightforward generalizations of main axioms in {\bf BM} are not valid. Hence, the development of a sound and complete axiomatic system for two-sorted weighted modal logic is yet another challenging issue to be addressed in the future work.

\noindent {\bf Dynamics of context}: As a formal context consists of objects, properties, and a relation  between them, the relationship between objects and properties can change over time. Hence, to model and analyze the dynamics of contexts is also desirable. The logics presented so far are static as they can model only a snapshot of dynamically changing contexts at some time. By contrast, to  reason  about dynamics of a context, we need to take its temporal aspects into account. A possible way is to integrate temporal or dynamic logics\cite{tenselogic,dynamiclogic} with systems proposed in this paper. This will be also a direction for our future research.

\noindent {\bf Two-sorted frames beyond formal contexts}: In recent years, many different types of two-sorted frames have received much attention in the study of logic,  such as  {\it reduced and separating frames} \cite{CWFSPAPMTAWN}, {\it enriched formal contexts} \cite{Conradie2017167} and {\it LE-frames} \cite{Greco}. The Sequent Calculi for enriched formal contexts and  LE-frames have been given in \cite{Conradie2017167} and \cite{Greco}, respectively.  Following the work reported in this paper,  the development of Hilbert-style axiomatization for such frames is another direction deserving further exploration.

\noindent {\bf Many-valued contexts}: In many applications, the binary relation in formal contexts may be not two-valued. For example, by slightly changing the scenario introduced in our running example, suppose that the transaction data also contains the times of a customer purchasing an item.  We can reasonable assume that the more frequent a customer bought an item, the more he likes the product. For each item $m$, let $N_m$ be the maximum number of times that a single customer bought it. Then, if a customer $g$ has bought the item $n$ times, we can take the degree that $g$ likes $m$ as $\frac{n}{N_m}$. As a consequence, we have a many-valued context $(G,M,\tilde{I})$ where $\tilde{I}(g,m)\in[0,1]$ is the degree that $g$ likes $m$. Furthermore, we may want to express fuzzy information about the contexts, such as ``{\em Young \/} customers generally like the product item $m$'' or ``The customer $g$ likes {\em expensive \/} items''. Indeed, to deal with vague information in formal contexts, various fuzzy FCA methods have been proposed~\cite{Antoni2018,Belohlavek04,BelohlavekV05,Belohlavek11,BritoBEBC18}.

For the knowledge representation and reasoning of such information, we will need many-valued versions of different modal logics proposed in this paper. The basic idea of many-valued modal logic is simple. We simply have to replace the underlying two-valued logic with a many-valued one. However, when generalizing modal logics to the many-valued case, there are several subtle issues deserving special attention. First, in the many-valued case, box and diamond modalities are generally not mutually definable. Hence, when we consider the signature of a logic, the $\Box$-fragment, the $\Diamond$-fragment, and its full version are all different. Furthermore, the domain of truth values does matter. Therefore, for a logic introduced in the paper like {\bf KB}, there may be a plethora of  many-valued variants parameterized by their signatures and truth value domains. We expect that the investigation of these logics can yield fruitful results.

Second, because contraposition law usually does not hold for many-valued logic, we no longer have a correspondence between $\Box$ and $\boxminus$ modalities as in the case of {\bf KB} and {\bf KF}. Its bad implication is that we can not obtain a complete axiomatization of {\bf mv-KF} from a straightforward translation of that for {\bf mv-KB}. On the good side, this implies that we do not have to consider the interaction axiom between two mutually complemented modalities. Hence, finding axioms characterizing $\boxminus$ is another open problem for two-sorted many-valued modal logic. 

Finally, because possible worlds in many-valued models do not correspond to crisp subsets of formulas, we can no longer take maximally consistent subsets of formulas as possible worlds in the canonical model construction. To address this issue, we will follow the technique in the pioneering work ~\cite{fml11}, where a possible world in the canonical model is a non-modal homomorphism $h$ such that $h(\phi)=1$ for any theorem $\phi$ of the given logic. Then, the fuzzy relation corresponding to a modality is determined by its semantics accordingly. Implementing the proof technique for different two-sorted  many-valued modal logics will be an interesting technical challenge.
\begin{acks}
This work is partially supported by the National Science and Technology Council of Taiwan under Grants: NSTC 113-2221-E-001-018-MY3 and NSTC 113-2221-E-001-021-MY3. We are very grateful to the anonymous referees for their insightful comments and valuable suggestions.
\end{acks}

\end{document}